\newcommand*{\MRref}[2]{ \href{http://www.ams.org/mathscinet-getitem?mr=#1}{MR #1}}
\newcommand*{\arxiv}[1]{ \href{http://www.arxiv.org/abs/#1}{arXiv:#1}}
\newtheorem{theorem}{Theorem}[section]
\numberwithin{equation}{section}
\newtheorem{lemma}[theorem]{Lemma}
\newtheorem{proposition}[theorem]{Proposition}
\newtheorem{corollary}[theorem]{Corollary}
\newtheorem{definition}[theorem]{Definition}
\theoremstyle{remark}
\newtheorem{remark}{Remark}
\DeclareMathOperator*{\dom}{dom}
\DeclareMathOperator*{\ran}{ran}
\DeclareMathOperator*{\cspn}{\overline{span}}
\DeclareMathOperator*{\Inv}{Inv}
\DeclareMathOperator*{\Pinv}{PInv}
\DeclareMathOperator*{\Hom}{Hom}
\DeclareMathOperator*{\PHom}{PHom}
\DeclareMathOperator*{\Id}{Id}
\newcommand*{\Star}{\texorpdfstring{$^*$\nobreakdash-\hspace{0pt}}{*-}}
\newcommand*{\Ad}{\mathrm{Ad}}
\newcommand*{\sbe}{\subseteq}
\newcommand*{\red}{\textup{r}} 
\newcommand*{\cstar}{\texorpdfstring{$C^*$\nobreakdash-\hspace{0pt}}{C*-}}
\newcommand*{\cont}{\mathcal C} 
\newcommand*{\contz}{\cont_0} 
\newcommand*{\id}{\textup{id}} 
\newcommand*{\hils}{\mathcal H} 
\newcommand*{\A}{\mathcal{A}} 
\newcommand*{\B}{\mathcal{B}} 
\newcommand*{\D}{\mathcal{D}} 
\newcommand*{\F}{\mathcal{F}} 
\newcommand*{\M}{\mathcal{M}} 
\newcommand*{\E}{\mathcal{E}} 
\newcommand*{\U}{\mathcal{U}} 
\newcommand*{\defeq}{\mathrel{\vcentcolon=}}
\newcommand*{\into}{\hookrightarrow}
\newcommand*{\congto}{\xrightarrow\sim}
\newcommand*{\Ls}{\mathcal{L}} 
\newcommand*{\inv}{^{-1}} 
\newcommand*{\G}{G} 
\renewcommand*{\H}{H} 
\newcommand*{\st}{\colon}
\newcommand*{\I}{\mathcal{I}}
\newcommand*{\e}{\varepsilon_}
\newcommand*{\T}{H}
\newcommand*{\Pl}{\mathcal{P}_\ell}
\renewcommand*{\S}[1]{\Pr(#1)} 
\newcommand*{\SG}{\S\G} 
\newcommand*{\Sh}[1]{\tilde{#1}}
\newcommand*{\pros}{\textstyle\prod\limits}
\newcommand*{\Sp}{{\bf Pr}(\G)}
\renewcommand*{\Pr}{{\bf Pr}} 
\newcommand*{\Fg}{{\bf Fg}} 
\newcommand*{\gen}[1]{\langle #1 \rangle} 
\newcommand*{\eq}[1]{\buildrel{#1}\over =}
\begin{document}
\title[Inverse semigroup expansions]{Inverse semigroup expansions \\ and their actions on \cstar{}algebras}

\author{Alcides Buss}
\email{alcides@mtm.ufsc.br}
\author{Ruy Exel}
\email{exel@mtm.ufsc.br}

\address{Departamento de Matemática\\
  Universidade Federal de Santa Catarina\\
  88.040-900 Florianópolis-SC\\
  Brasil}

\begin{abstract}
In this work, we give a presentation of the prefix expansion $\SG$ of an inverse semigroup $\G$ as recently introduced by Lawson, Margolis and Steinberg which is similar to the universal inverse semigroup defined by the
second named author in case $\G$ is a group. The inverse semigroup $\SG$ classifies the partial actions of $G$ on spaces. We extend this result and prove that Fell bundles over $\G$ correspond bijectively to saturated Fell
bundles over $\SG$. In particular, this shows that twisted partial actions of $\G$ (on \cstar{}algebras) correspond to twisted (global) actions of $\SG$. Furthermore, we show that this correspondence preserves
\cstar{}algebras crossed products.
\end{abstract}

\subjclass[2010]{20M18, 20M30, 46L55}

\keywords{Inverse semigroups, expansions, twisted partial actions, Fell bundles, crossed products.}

\thanks{This research was partially supported by CNPq.}

\maketitle

 \tableofcontents

\section{Introduction}
\label{sec:introduction}

Modifying the construction of the Birget-Rhodes prefix expansion of a
semigroup~\cite{BirgetRhodes:expansion}, Lawson, Margolis and Steinberg~\cite{LawsonMargolisSteinberg:Expansions} introduced a generalized
prefix expansion of an inverse semigroup $\G$ as follows: let $\Sp$ be the
collection of all pairs $(A,t)$, where $t\in\G$, and $A$ is a finite subset of
$\G$ satisfying
  $$
  tt^*,t\in A \quad\mbox{and}\quad ss^*=tt^* \quad\mbox{for all } s\in A.
  $$
  With the operation
  \begin{equation}\label{LMSFormulaForProduct}
  (A,t)(B, s) = (tss^*t^*A\cup tB, ts),
  \end{equation}
  $\Sp$ turns out to be an inverse semigroup, and in the main application
presented in \cite{LawsonMargolisSteinberg:Expansions}, it is proved that $\Sp$ possesses a universal property
with respect to partial actions of $\G$: partial actions (resp. representations/homomorphisms) of $\G$ correspond bijectively to global actions (resp. representations/homomorphisms) of $\SG$.

The above explicit construction of $\Sp$ is suggested via a McAlister-O'Carroll triple \cite{Lawson:InverseSemigroups} constructed from $\G$ in a somewhat involving way.

The first goal of this paper is to obtain an alternate 
description of $\Sp$ in terms of generators and relations which emphasizes its role as governing partial representations (or homomorphisms) of $\G$.  Our method for doing so is independent of
\cite{LawsonMargolisSteinberg:Expansions} and is very similar to the one adopted in \cite{Exel:PartialActionsGroupsAndInverseSemigroups}, where an inverse semigroup ${\mathcal S}(G)$ was constructed from a group $G$,
possessing a universal property related to partial representations and which was later shown by Kellendonk and Lawson \cite{KellendonkLawson:PartialActions} to be precisely the Birget-Rhodes expansion of $G$. Needless to
say, in case $\G$ is a group, the prefix expansion $\Sp$ coincides with the inverse semigroup ${\mathcal S}(G)$ constructed in \cite{Exel:PartialActionsGroupsAndInverseSemigroups}.

The interest in approaching $\Sp$ from a different route is, in part, to show that its structure, including the product operation~\eqref{LMSFormulaForProduct} above, is a direct consequence of the concept of partial
representations.
In addition, we answer some natural questions about the expansion $\Sp$ which did not take place in~\cite{LawsonMargolisSteinberg:Expansions}, as for instance what happens with $\S\SG$ as well as the order of $\SG$ if $\G$
is finite.

As already observed in \cite{LawsonMargolisSteinberg:Expansions}, the assignment $\G\mapsto \SG$ is a functor on the category $\Inv$ of inverse semigroups (with homomorphisms as morphisms). It may also be viewed as a functor
from the category $\Pinv$ of inverse semigroups with partial homomorphisms as morphisms to $\Inv$. When viewed in this way, we show that it is the left adjoint of the forgetful functor $\Inv\to \Pinv$. This is related to the
result in~\cite{Szendrei:BirgetRhodes} where Szendrei shows that $G\mapsto \SG$, when considered as a functor from the category of groups to $F$-inverse semigroups, is left adjoint of to the functor assigning to an inverse
semigroup its maximal homomorphic group image.

One of the main goals in this paper is to study the relation between Fell bundles and crossed products by $\G$ and $\SG$. We prove that Fell bundles over $\G$ correspond bijectively to \emph{saturated} Fell bundles over
$\SG$ in a somewhat canonical way. In fact, this correspondence extends to a functor and gives an equivalence between the categories of Fell bundles over $\G$ and saturated Fell bundles over $\SG$. Moreover, this equivalence
preserves the associated (full and reduced) cross-sectional \cstar{}algebras. In particular, the functor $\G\mapsto \SG$ preserves classical partial crossed products, that is, $A\rtimes \G\cong A\rtimes\SG$ for any partial
action of $\G$ on a \cstar{}algebra $A$ and the corresponding (global) action of $\SG$ on $A$. A similar result still holds for twisted partial actions. In fact, all these results are special cases of the more general
theorem we obtain for Fell bundles and their corresponding \cstar{}algebras.

\section{The expansion}

Throughout this section, we will let $\G$ be an inverse semigroup.

\begin{definition} \label{DefSG} We shall let $\SG$ denote the universal semigroup
defined via generators and relations as follows: to each element $s$ in $\G$ we
take a generator $[s]$ (from any fixed set having as many elements as $\G$), and
for every $s,t\in \G$ we consider the relations
\end{definition}
\begin{enumerate}[(i)]
  \item $[s][t][t^*] = [st][t^*]$,
  \item $[s^*][s][t] = [s^*][st]$,
  \item $[s][s^*][s] = [s]$.\label{DefSG.iii}
\end{enumerate}

The following result is inspired by \cite[Proposition 2.4]{Exel:PartialActionsGroupsAndInverseSemigroups}.

\begin{proposition}\label{PropFromInverse} For every $t$ in $\G$, let $\e t =
[t][t^*]$.  Then, for each $t$ and $s$ in $\G$,
\begin{enumerate}[(i)]
  \item $\e t$ is idempotent,
  \item $[t]\e s = \e {ts}[t]$, \label{CommutRelation}
  \item $\e t$ and $\e s$ commute. \label{EpsilonsCommute}
\end{enumerate}
\end{proposition}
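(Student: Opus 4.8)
The plan is to derive everything from the three defining relations (i)--(iii) of Definition~\ref{DefSG}, working purely formally in the semigroup $\SG$.

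\textbf{Part (i): $\e t$ is idempotent.} I would compute $\e t\,\e t = [t][t^*][t][t^*]$ and apply relation~(iii) to the middle factor $[t][t^*][t] = [t]$, obtaining $[t][t^*] = \e t$. So this is immediate from~(iii) alone.

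\textbf{Part (ii): $[t]\,\e s = \e{ts}\,[t]$.} Unwinding definitions, the claim is $[t][s][s^*] = [ts][(ts)^*][t] = [ts][s^*t^*][t]$. Starting from the left-hand side, relation~(i) (with the roles ``$s$'' $=t$, ``$t$'' $=s$) gives $[t][s][s^*] = [ts][s^*]$, so it remains to show $[ts][s^*] = [ts][s^*t^*][t]$; equivalently, I want to insert $[t^*][t]$ after $[s^*]$. For this I would use relation~(iii) in the form $[t^*] = [t^*][t][t^*]$ — which follows from~(iii) applied to the generator $[t^*]$, since $(t^*)^* = t$ — to rewrite $[ts][s^*] = [ts][s^*t^*]$? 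No: the right move is to recognize $[ts][s^*t^*][t]$ and collapse it. Instead I would run the computation in the other direction: apply relation~(ii) (with ``$s$'' $=t$, so $[t^*][t][x] = [t^*][tx]$, read right-to-left) is not quite in the needed shape either, so the cleaner path is: write $[ts][s^*t^*][t] = [ts]\,\big([s^*t^*][(s^*t^*)^*]\big)\,\ldots$ — getting complicated. The genuinely clean argument: by relation~(i), $[ts][s^*t^*][t] = [ts][s^*t^* \cdot t]$? No, (i) has the specific pattern $[a][b][b^*]$. Let me instead use that $[s^*t^*][t] = [s^*t^*t]$ would require~(ii)-type cancellation; relation~(ii) says $[a^*][a][b] = [a^*][ab]$, and with $a = s^*t^*$ we would need a leading $[st]$, not $[ts]$. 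So the actual plan is: start from $[ts][s^*]$ and \emph{build up} to the right-hand side, inserting $[t^*t] = [t^*][t]$ — but $[t^*][t]$ is $\e{t^*}$, an idempotent, and I would use part~(ii) of Proposition~\ref{PropFromInverse} for $t^*$... which is circular. The honest resolution is to do a short bidirectional calculation using~(i),~(ii),~(iii) together; I expect two or three rewrites suffice, and I will write it out carefully, but pinning down the exact sequence of substitutions is the main obstacle in this proposition.

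\textbf{Part (iii): $\e t$ and $\e s$ commute.} Once~(ii) is established, this is formal: $\e t\,\e s = \e t\,\e s$, and using~(ii) on $\e t \e s = [t][t^*]\e s = [t]\,\e{t^*s}\,[t^*] = \e{tt^*s}\,[t][t^*] = \e{tt^*s}\,\e t$. By symmetry $\e s\,\e t = \e{ss^*t}\,\e s$. To conclude these are equal I would further simplify: since each $\e{}$ is idempotent and, applying~(ii) again, $\e{tt^*s}\e t$ should collapse — indeed $\e t\,\e s$ is manifestly of the form $(\text{idempotent})$, and the standard trick is to show $\e t \e s = \e s \e t$ by proving both equal a common third expression. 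Concretely, I expect $\e t\,\e s\,\e t = \e t\,\e s$ will drop out of~(ii)+(iii), giving that the $\e{}$'s generate a commutative idempotent semigroup; alternatively I mimic the proof of the analogous fact in \cite{Exel:PartialActionsGroupsAndInverseSemigroups}. The commutativity will follow because $\SG$ maps onto a genuine inverse semigroup where idempotents commute, but since we want an intrinsic proof I will keep it purely computational, deriving $\e t\e s = \e s\e t$ from the relation $[t]\e s = \e{ts}[t]$ applied symmetrically.

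The main obstacle throughout is purely bookkeeping: choosing the right instantiations of the variables in relations~(i)--(iii) and the right direction (left-to-right vs.\ right-to-left) for each rewrite so that the chain of equalities closes up. No deep idea is needed beyond the three relations; I expect each part to be at most a half-dozen lines of elementary manipulation, modeled on \cite[Proposition 2.4]{Exel:PartialActionsGroupsAndInverseSemigroups}.
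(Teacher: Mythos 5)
Part (i) of your proposal is correct and matches the paper. The rest has a genuine gap: you never actually complete the proof of (ii), and your proof of (iii) is built on top of (ii), so neither is established. You correctly reduce (ii) to showing $[ts][s^*]=[ts][s^*t^*][t]$ and then find that every candidate rewrite either fails to match the patterns of relations (i)--(iii) of Definition~\ref{DefSG} or is circular (using part (ii) for $t^*$, or the commutativity you have not yet proved). This is not just bookkeeping that will close up; the paper avoids the impasse by inverting your order of dependencies. Your fallback remark that commutativity of the $\e{}$'s ``will follow because $\SG$ maps onto a genuine inverse semigroup'' is also not a proof: commutativity of idempotents in a quotient says nothing about $\SG$ itself.

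The missing idea is to prove (iii) \emph{first}, by a direct computation, and only then deduce (ii). For (iii) the paper computes $\e s\e t=[s][s^*][t][t^*]=[s][s^*t][t^*]$ (relation (i) of the definition), inserts $[t^*s][s^*t]$ in the middle via $[s^*t]=[s^*t][t^*s][s^*t]$ (relation (iii) of the definition), and collapses to $[ss^*t][t^*s][s^*tt^*]$; swapping $s$ and $t$ and repeating the same insertion, one uses that idempotents of the \emph{underlying} inverse semigroup $\G$ commute (e.g.\ $tt^*ss^*t=ss^*tt^*t=ss^*t$) to reduce $\e t\e s$ to the very same expression. Note that this step genuinely uses the structure of $\G$, not only formal rewriting of generators. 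With (iii) in hand, (ii) is the short chain
$[t]\e s=[t][t^*][t][s][s^*]=[t]\,\e{t^*}\,\e s=[t]\,\e s\,\e{t^*}=[ts][s^*][t^*][t]=[ts][s^*t^*][t]=\e{ts}[t]$,
where the middle equality is exactly part (iii) and the last three equalities are instances of relation (i) of the definition. So the result is provable roughly along the lines you sketch, but as written your proposal establishes only part (i).
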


\begin{proof} Notice that (i) follows immediately from \ref{DefSG}\eqref{DefSG.iii}.  With
respect to (iii), we have
  \begin{equation}\label{Eq:Prod.Epsilons}
  \e s\e t =
  [s][s^*] [t][t^*] =
  [s][s^*t][t^*] =
  [s][s^*t][t^*s][s^*t][t^*] =
  [ss^*t][t^*s][s^*tt^*].
  \end{equation}
  Interchanging the roles of $s$ and $t$, the above computation gives
  \begin{multline*}
  \e t\e s =
  [tt^*s][s^*t][t^*ss^*] =
  [tt^*s][s^*t][t^*s][s^*t][t^*ss^*] =
  [tt^*ss^*t][t^*s][s^*tt^*ss^*] \\
  =[ss^*tt^*t][t^*s][s^*ss^*tt^*] =
  [ss^*t][t^*s][s^*tt^*] \eq{\eqref{Eq:Prod.Epsilons}}
  \e s\e t.
  \end{multline*}
  Addressing (ii) observe that
  \begin{multline*}
  [t]\e s =
  [t][s][s^*] =
  [t][t^*][t][s][s^*] \eq{(iii)}
  [t][s][s^*][t^*][t] \\
  =[ts][s^*][t^*][t] =
  [ts][s^*t^*][t] =
  \e{ts} [t].
  \end{multline*}
\vskip -1pc
\end{proof}

In the next result, we write $E(\G)$ for the idempotent semilattice of $\G$.

\begin{proposition} \label{IdempotentsInBracket} Given $e\in E(\G)$ and $s\in\G$ one has that:
\begin{enumerate}[(i)]
  \item $\e e=[e]$,  and hence $[e]$ is idempotent,
  \item $[e][s]=[es]$, and $[s][e] = [se]$,
  \item $\e e\e s = \e{es}$.
\end{enumerate}
\end{proposition}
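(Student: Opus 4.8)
The plan is to handle the three items in order, first recording two auxiliary identities that carry most of the weight. Throughout I use that an idempotent $e\in E(\G)$ satisfies $e^*=e$ and $e^2=e$. Item~(i) is immediate: taking $s=e$ in \ref{DefSG}\eqref{DefSG.iii} gives $[e][e][e]=[e]$, while taking $s=t=e$ in the first relation of \ref{DefSG} gives $[e][e][e]=[ee][e]=[e][e]$; comparing these, $[e]^2=[e]$, so $[e]$ is idempotent, and then $\e e=[e][e^*]=[e]^2=[e]$.

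Before attacking (ii), I would establish that for all $s,t\in\G$ one has $[s][t]=\e s\,[st]$ and $[s][t]=[st]\,\e{t^*}$: the first follows by multiplying the relation $[s^*][s][t]=[s^*][st]$ on the left by $[s]$ and using $[s][s^*][s]=[s]$, and the second by multiplying $[s][t][t^*]=[st][t^*]$ on the right by $[t]$ and again using \ref{DefSG}\eqref{DefSG.iii}. These two identities are precisely what breaks the circularity one runs into when trying to prove (ii) directly from a single defining relation (such an attempt only produces tautologies like $[e][s]=[e][es]$ or $[e][s]=[se][e]$). I also record the trivial special cases $\e x\,[x]=[x]=[x]\,\e{x^*}$ of \ref{DefSG}\eqref{DefSG.iii}.

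With these in hand (ii) is short. For the first identity, $[e][s]=\e e\,[es]=[e][es]$ by the first auxiliary formula and~(i), and then $[e][es]=[es]\,\e{(es)^*}=[es]$ by the second auxiliary formula (using $e\cdot es=es$) together with $[x]=[x]\,\e{x^*}$ for $x=es$; hence $[e][s]=[es]$. Symmetrically, $[s][e]=[se]\,\e{e^*}=[se][e]=\e{se}\,[se]=[se]$, using the auxiliary formulas, (i), $se\cdot e=se$, and $\e x\,[x]=[x]$ for $x=se$. (One could instead deduce the second identity from the first once $[s^*]$ is known to invert $[s]$ in $\SG$, but the direct computation avoids relying on that.)

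Finally, for (iii) I note that $(es)^*=s^*e$ and apply (ii) twice to get $\e{es}=[es][(es)^*]=[e][s]\,[s^*][e]=[e]\,\e s\,[e]=\e e\,\e s\,\e e$, the last equality using $\e e=[e]$ from (i). Since $\e e$ is idempotent by \ref{PropFromInverse}(i) and commutes with $\e s$ by \ref{PropFromInverse}\eqref{EpsilonsCommute}, we may rewrite $\e e\,\e s\,\e e=\e e\,\e e\,\e s=\e e\,\e s$, which finishes the proof. The one step that requires genuine care is the derivation of the two auxiliary identities powering (ii); granting those, items (i) and (iii) are routine.
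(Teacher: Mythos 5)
Your proof is correct and follows essentially the same route as the paper's: a direct computation from the defining relations of $\SG$, establishing (i) first and then feeding it into (ii) and (iii) exactly as the paper does. The only organizational difference is that you factor the computation in (ii) through the reusable identities $[s][t]=\e s[st]$ and $[s][t]=[st]\e{t^*}$ (the first of which the paper itself derives later, in the proof of Proposition~\ref{FirstCannonical}), whereas the paper performs the same manipulations inline.
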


\begin{proof}  (i)
  This follows from
  $$
  [e] = [e][e^*][e] = [e][e^*e] = [e][e^*] =\e e.
  $$
  In order to prove (ii) we have by (i) that
  \begin{multline*}
  [e][s] = [e][e][s] = [e][es] = [e][es][(es)^*][es] =
  [ees][(es)^*][es] \\
  = [es][(es)^*][es] = [es].
  \end{multline*}
  The second part of (ii) follows similarly.
  As for (iii) we have
  $$
  \e e\e s =   \e e\e s \e e \eq{(i)}
  [e][s][s^*][e] \eq{(ii)}
  [es][s^*e] = \e{es}.
  $$
\vskip -1pc
\end{proof}

\begin{lemma}\label{Reductor} Given $s_1,\ldots,s_n,t\in \G$, let
  $x = \e {s_1} \ldots \e {s_n} [t]$.  Then
  $$
  x =   \e{s_1'}\e{s_2'}\ldots \e {s_n'}[t'],
  $$
  where $t'=pt$, $s_i' = ps_i$, for all $i=1, \ldots, n$, and
  $p = s_1s_1^*\ s_2s_2^* \ldots s_ns_n^*\ tt^*$.
\end{lemma}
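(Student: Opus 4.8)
The plan is to reduce the whole statement to a single absorption identity, namely $\e p\, x = x$, where $p = s_1 s_1^*\, s_2 s_2^* \cdots s_n s_n^*\, tt^*$ as in the statement. First note that $p$ is an idempotent of $\G$, being a product of the commuting idempotents $s_i s_i^*$ and $tt^*$; in particular $p^* = p$, and by Proposition~\ref{IdempotentsInBracket}(i) we have $[p] = \e p$ and $[p]$ is idempotent in $\SG$. Granting the identity $\e p\, x = x$, the lemma follows formally: since $\e p$ is idempotent and, by Proposition~\ref{PropFromInverse}(iii), commutes with each $\e{s_i}$, we may write
$$
x = \e p\, x = \e p\, \e{s_1}\cdots\e{s_n}[t] = (\e p\,\e{s_1})(\e p\,\e{s_2})\cdots(\e p\,\e{s_n})(\e p\,[t]);
$$
then Proposition~\ref{IdempotentsInBracket}(iii) gives $\e p\,\e{s_i} = \e{ps_i} = \e{s_i'}$, while $\e p\,[t] = [p][t] = [pt] = [t']$ by Proposition~\ref{IdempotentsInBracket}(i)--(ii). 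This is exactly the asserted formula $x = \e{s_1'}\cdots\e{s_n'}[t']$.

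It remains to establish $\e p\, x = x$, and the idea is to break $\e p$ into its atomic factors and let them be absorbed one by one. Applying Proposition~\ref{IdempotentsInBracket}(iii) repeatedly, $\e p = \e{s_1 s_1^*}\,\e{s_2 s_2^*}\cdots\e{s_n s_n^*}\,\e{tt^*}$. All of these factors, together with $\e{s_1},\dots,\e{s_n}$, are idempotent and pairwise commuting by Proposition~\ref{PropFromInverse}(i) and (iii), so we may reorganize
$$
\e p\, \e{s_1}\cdots\e{s_n} = \bigl(\e{s_1 s_1^*}\,\e{s_1}\bigr)\bigl(\e{s_2 s_2^*}\,\e{s_2}\bigr)\cdots\bigl(\e{s_n s_n^*}\,\e{s_n}\bigr)\,\e{tt^*}.
$$
By Proposition~\ref{IdempotentsInBracket}(iii) each $\e{s_i s_i^*}\,\e{s_i} = \e{s_i s_i^* s_i} = \e{s_i}$, so the right-hand side collapses to $\e{s_1}\cdots\e{s_n}\,\e{tt^*}$, whence $\e p\, x = \e{s_1}\cdots\e{s_n}\,\e{tt^*}[t]$. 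Finally $\e{tt^*}[t] = [tt^*][t] = [tt^* t] = [t]$ by Proposition~\ref{IdempotentsInBracket}(i)--(ii), and therefore $\e p\, x = x$.

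I do not expect a real obstacle here: every step is a direct consequence of Propositions~\ref{PropFromInverse} and~\ref{IdempotentsInBracket}. The only point needing a little care is the bookkeeping with the commuting idempotents $\e{(\cdot)}$: one must check that the reorderings above are legitimate, which they are, since Proposition~\ref{PropFromInverse}(iii) says any two such elements commute and idempotency lets us freely insert or delete repeated copies of $\e p$ (noting that $\e p$ need not commute with $[t]$, which is why one keeps a single copy adjacent to $[t]$). The one mildly nonroutine move is recognizing that the right intermediate statement to isolate is the absorption identity $\e p\, x = x$, after which both halves of the argument become purely formal.
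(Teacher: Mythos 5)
Your proof is correct and follows essentially the same route as the paper: both arguments hinge on showing that $\e p$ can be introduced for free in front of $\e{s_1}\cdots\e{s_n}[t]$ (you phrase this as the absorption identity $\e p x = x$; the paper runs the same computation in the expanding direction via $\e{s_i}=\e{s_is_i^*}\e{s_i}$ and $[t]=\e{tt^*}[t]$), and then both replicate the idempotent $\e p$ across the factors and absorb each copy using Proposition~\ref{IdempotentsInBracket}. No gaps.
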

\begin{proof} Letting
  $f = tt^*$
  and
  $e_i=s_is_i^*$, for all
  $i=1,\ldots,n$,
  observe that, for each $i$, one has
  $$
  \e {s_i} =   \e{e_is_i} \eq{\textup{Proposition}~\ref{IdempotentsInBracket}(iii)}
  \e{e_i}\e{s_i},
  $$
  while
  $$
  [t] = [t][t^*][t] = [tt^*][t] = [f][t] \eq{\textup{Proposition}~\ref{IdempotentsInBracket}(i)}
  \e f[t].
  $$
  So
  \begin{multline*}
  x =
  \e {s_1} \e {s_2} \ldots \e {s_n} [t] =
  \e{e_1}\e{s_1}\ \e{e_2}\e{s_2} \ldots\e{e_n} \e {s_n}\ \e{f} [t] \\
  =\e{e_1}\e{e_2}\ldots \e{e_n}\e{f}\ \e{s_1}\e{s_2}\ldots \e {s_n}[t] \eq{\textup{Proposition}~\ref{IdempotentsInBracket}(iii)}
  \e p\e{s_1}\e{s_2}\ldots \e {s_n}[t] \\
  =\e p\e{s_1}\ \e p\e{s_2}\ldots \e p\e {s_n}\ \e p[t]  =
  \e{ps_1} \e{ps_2}\ldots \e {ps_n}[pt].
  \end{multline*}
\vskip -1pc
\end{proof}

\begin{proposition}\label{FirstCannonical} Every element $x\in\SG$ may be written
as a product
  \begin{equation}\label{Decomposition}
  x = \e {s_1} \ldots \e {s_n} [t],
  \end{equation}
  where $s_1,\ldots,s_n,t\in \G$. One may moreover assume that
\begin{enumerate}[(a)]
  \item $s_1s_1^* = \ldots = s_ns_n^* = tt^*$, and
  \item $t$ and $tt^*$ belong to the set $\{s_1,\ldots,s_n\}$.
\end{enumerate}
\end{proposition}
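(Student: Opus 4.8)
The plan is to proceed in three stages: first obtain \emph{some} decomposition of the form \eqref{Decomposition} for an arbitrary $x\in\SG$, then invoke Lemma \ref{Reductor} to force condition~(a), and finally adjoin two idempotent factors to force condition~(b). For the first stage, note that $\SG$ is generated as a semigroup by the symbols $[s]$, $s\in\G$, so every $x\in\SG$ is a finite product $[t_1][t_2]\cdots[t_k]$, and I argue by induction on $k$. When $k=1$, relation \ref{DefSG}\eqref{DefSG.iii} gives $[t_1]=[t_1][t_1^*][t_1]=\e{t_1}[t_1]$. For the inductive step, write $[t_1]\cdots[t_k]=\e{s_1}\cdots\e{s_n}[t]$ by hypothesis; then, using \ref{DefSG}\eqref{DefSG.iii} followed by the relation $[t^*][t][t_{k+1}]=[t^*][t t_{k+1}]$ (which is the second defining relation of \ref{DefSG} applied to the pair $t,\,t_{k+1}$),
\[
[t][t_{k+1}]=[t][t^*][t][t_{k+1}]=[t][t^*][t t_{k+1}]=\e t\,[t t_{k+1}],
\]
so $x=\e{s_1}\cdots\e{s_n}\e t\,[t t_{k+1}]$ is again of the required shape.

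For the second stage, given $x=\e{s_1}\cdots\e{s_n}[t]$ from the first stage, Lemma \ref{Reductor} rewrites it as $x=\e{s_1'}\cdots\e{s_n'}[t']$ with $s_i'=p s_i$, $t'=p t$ and $p=s_1s_1^*\cdots s_ns_n^*\,t t^*\in E(\G)$. Since $p$ is idempotent and $p\,(s_is_i^*)=p=p\,(t t^*)$ (because those idempotents are among the factors of $p$), a one-line computation in the commutative semilattice $E(\G)$ gives $s_i'(s_i')^*=p\,s_is_i^*\,p=p$ and $t'(t')^*=p\,t t^*\,p=p$ for every $i$; this is precisely condition~(a), with common value $p$.

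For the third stage it only remains to make $t'$ and $t'(t')^*=p$ appear among the left-hand factors. Because $\e{t'}[t']=[t']$ by \ref{DefSG}\eqref{DefSG.iii} and the $\e{\cdot}$'s pairwise commute by Proposition \ref{PropFromInverse}\eqref{EpsilonsCommute}, we may prepend $\e{t'}$, obtaining $x=\e{t'}\e{s_1'}\cdots\e{s_n'}[t']$; similarly, $\e p=[p]$ by Proposition \ref{IdempotentsInBracket}(i) and $\e p\,\e s=\e{p s}$ for all $s$ by Proposition \ref{IdempotentsInBracket}(iii), so — since $p s_i'=s_i'$ and $p t'=t'$ — the factor $\e p$ may likewise be prepended without altering $x$. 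Relabelling the list $p,\,t',\,s_1',\dots,s_n'$ as $s_1,\dots,s_{n+2}$ and putting $t\defeq t'$, every $s_is_i^*$ still equals $p=t t^*$, so~(a) survives, while $t=t'$ and $t t^*=p$ now both lie in $\{s_1,\dots,s_{n+2}\}$, which is~(b). The only genuinely non-mechanical point is the first stage, namely spotting that relations \ref{DefSG}\eqref{DefSG.iii} and the second one already suffice to collapse an arbitrary word in the $[t_i]$'s into the normal form $\e{s_1}\cdots\e{s_n}[t]$; the rest is bookkeeping on top of Lemma \ref{Reductor} and Propositions \ref{PropFromInverse}--\ref{IdempotentsInBracket}.
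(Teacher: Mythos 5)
Your proposal is correct and follows essentially the same route as the paper: induction on word length using relations \ref{DefSG}(ii)--(iii) to establish the decomposition, Lemma~\ref{Reductor} to obtain~(a), and the adjunction of the idempotent factors $\e{t'}$ and $\e{p}$ (the paper phrases this as $[t]=\e t\e{tt^*}[t]$) to obtain~(b). No gaps.
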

\begin{proof} Given $x\in\SG$ write
  $x = [r_1][r_2]\ldots[r_m]$, where the $r_i\in \G$.  We shall prove the first
sentence of the statement by induction on $m$.  The case $m=1$ follows from
  $$
  x = [r_1] = [r_1][r_1^*][r_1] = \e{r_1} [r_1].
  $$
  Assuming that $m\geq 2$, use the induction hypothesis to write
  $$
  [r_1][r_2]\ldots[r_{m-1}] = \e {s_1} \ldots \e {s_n} [t],
  $$
  with $s_1,\ldots,s_n,t\in \G$.
  Next, observe that
  \begin{equation}\label{IntermCalculation}
  [t][r_m] =  [t][t^*][t][r_m] = [t][t^*][tr_m] = \e t[tr_m].
  \end{equation}
  Therefore,
  $$
  x = [r_1][r_2]\ldots[r_m] =
  \e {s_1} \ldots \e {s_n} [t][r_m] \eq{\eqref{IntermCalculation}}
  \e {s_1} \ldots \e {s_n} \e t[tr_m],
  $$
  proving the first sentence in the statement.

In order to prove (a) write
  $x = \e {s_1} \e {s_2} \ldots \e {s_n} [t]$, using what we have already proved, and let
  $t'$, $s_i'$, and $p$, be as in Lemma~\ref{Reductor}.  Then the condition in (a)
is verified since
  $$
  s_i's_i'^* = ps_is_i^*p = p = ptt^*p = t't'^*.
  $$

  To conclude we need to check (b) but this follows from
  $$
  [t] = [t][t^*][t] = \e t[t] =  \e t[tt^*t] \eq{\textup{Proposition}~\ref{IdempotentsInBracket}(ii)}
  \e t[tt^*][t] =   \e t\e {tt^*}[t].
  $$
\vskip -1pc
\end{proof}

\begin{definition} If $x\in\SG$ is written as in \eqref{Decomposition} in
such a way that Proposition~\ref{FirstCannonical}(a--b) are satisfied, we will say that
$x$ is in \emph{normal form}.
\end{definition}

Suppose that an element $x\in\SG$ is written as
  $x = \e {s_1} \e {s_2} \ldots \e {s_n} [t]$,  as in \eqref{Decomposition},
but not necessarily in normal form.
Letting $A = \{s_1,s_2,\ldots, s_n\}$ we will denote by
  \begin{equation}\label{DefineEA}
  \e A := \e {s_1} \e {s_2} \ldots \e {s_n},
  \end{equation}
  which is unambiguously defined since the $\e {s_i}$ commute with each other by
Proposition~\ref{PropFromInverse}\eqref{EpsilonsCommute}.  Employing this notation, we may therefore write
  \begin{equation}\label{ShortDecomp}
  x = \e A[t].
  \end{equation}
Assuming now that $x$ is in normal form, notice that
Proposition~\ref{FirstCannonical}(a -- b) may be rephrased as
  \begin{equation}\label{FirstCannonicalSetLevel}
  t,tt^*\in A \quad\mbox{and}\quad ss^* = tt^*\quad \mbox{for all }s\in A.
  \end{equation}

\begin{definition}\label{DefineESet} Given $e\in E(\G)$ we will say that a subset $A\subseteq\G$ is an $e$-set if
\begin{enumerate}[(i)]
  \item $e\in A$, and
  \item $ss^*=e$, for all $s\in A$.
\end{enumerate}
\end{definition}

Observe that condition \eqref{FirstCannonicalSetLevel} may be rephrased  by
saying that $A$ is a finite $tt^*$-set containing $t$.  We may therefore
summarize our findings as follows.

\begin{proposition} Every element  $x\in \SG$ may be written as
  $$
  x = \e A[t],
  $$
  where $t\in \G$, and $A$ is a finite $tt^*$-set containing $t$,   in which case
we say that $x$ is in normal form.
\end{proposition}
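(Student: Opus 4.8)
The plan is to assemble the pieces already in place rather than to prove anything new. Given $x \in \SG$, apply Proposition~\ref{FirstCannonical} to write
$$
x = \e{s_1}\e{s_2}\cdots\e{s_n}[t],
$$
with $s_1,\ldots,s_n,t \in \G$ chosen so that conditions~(a) and~(b) of that proposition hold. Put $A = \{s_1,\ldots,s_n\}$. By Proposition~\ref{PropFromInverse}\eqref{EpsilonsCommute} the idempotents $\e{s_1},\ldots,\e{s_n}$ commute with one another, so the product $\e A$ defined in~\eqref{DefineEA} does not depend on the order of its factors, and we obtain $x = \e A[t]$ as in~\eqref{ShortDecomp}.

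It remains only to unwind the definitions. Condition~(a) of Proposition~\ref{FirstCannonical} reads $s_1 s_1^* = \cdots = s_n s_n^* = tt^*$, that is, $ss^* = tt^*$ for every $s \in A$; condition~(b) says $t, tt^* \in \{s_1,\ldots,s_n\} = A$. Comparing with Definition~\ref{DefineESet}, these two statements together assert precisely that $A$ is a finite $tt^*$-set containing $t$.

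I do not anticipate any obstacle here: the substantive work was already carried out in Lemma~\ref{Reductor} and Proposition~\ref{FirstCannonical}, and the present proposition is a reformulation whose purpose is to allow subsequent arguments to refer to normal forms via a single finite subset $A \subseteq \G$ together with an element $t \in A$, rather than via an ordered tuple $(s_1,\ldots,s_n,t)$.
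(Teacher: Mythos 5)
Your proposal is correct and follows exactly the route the paper takes: the proposition is stated there precisely as a summary of Proposition~\ref{FirstCannonical}, obtained by passing from the ordered tuple $(s_1,\ldots,s_n)$ to the set $A$ via the notation $\e A$ of~\eqref{DefineEA} (justified by the commutativity in Proposition~\ref{PropFromInverse}\eqref{EpsilonsCommute}) and observing that conditions (a) and (b) say exactly that $A$ is a finite $tt^*$-set containing $t$ in the sense of Definition~\ref{DefineESet}. Nothing is missing.
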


Let us now give an expression for multiplying elements of $\SG$.

\begin{proposition} \label{prop:FormulaProd} Let $A, B\subseteq \G$ be finite
subsets, let $t,s\in\G$, and let $x=\e{A}[t]$ and $y=\e{B}[s]$.  Assuming
that $t\in A$, one has that
  $$
  \begin{array}{ccc}
   xy & = & \hfill \e {A \cup (tB)}\hfill[ts] \\
          & = &  \e {(tss^*t^*A) \cup (tB)}[ts].
  \end{array}
  $$
Assuming that both $x$ and $y$ are in normal form (in which case the above
requirement that $t\in A$ is automatically satisfied) one moreover has that
the second expression for $xy$ above is in normal form.
\end{proposition}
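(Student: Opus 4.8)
The plan is to compute $xy$ directly, pushing all the $\varepsilon$'s to the left using the commutation relation \ref{PropFromInverse}\eqref{CommutRelation}, namely $[t]\e s = \e{ts}[t]$, together with the idempotent identities from Proposition \ref{IdempotentsInBracket}. Write $x = \e A[t]$ and $y = \e B[s]$ with $B = \{u_1,\dots,u_m\}$, so that $\e B[s] = \e{u_1}\cdots\e{u_m}[s]$. Then
\[
xy = \e A[t]\,\e{u_1}\cdots\e{u_m}[s].
\]
Applying \eqref{CommutRelation} repeatedly to move $[t]$ past each $\e{u_j}$ turns $[t]\e{u_j}$ into $\e{tu_j}[t]$, hence
\[
xy = \e A\,\e{tu_1}\cdots\e{tu_m}\,[t][s].
\]
Now $[t][s]$ is not yet $[ts]$, but using $[t] = [t][t^*][t]$ (relation \ref{DefSG}\eqref{DefSG.iii}) and relation \ref{DefSG}(i) in the form $[t][t^*][t][s] = [t][t^*][ts]$ — or more directly the identity \eqref{IntermCalculation} already established, $[t][s] = \e t[ts]$ — we get
\[
xy = \e A\,\e{tu_1}\cdots\e{tu_m}\,\e t\,[ts].
\]
Since $t\in A$ we have $\e t$ already among the factors of $\e A$ (that is, $\e A \e t = \e A$, because $\e t$ is idempotent and appears in the commuting product defining $\e A$), so the trailing $\e t$ is absorbed and we obtain $xy = \e A\,\e{tu_1}\cdots\e{tu_m}[ts] = \e{A\cup tB}[ts]$, which is the first claimed formula. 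Here one should note $tB = \{tu_1,\dots,tu_m\}$ and that the value of $\e{A\cup tB}$ depends only on the set $A\cup tB$, by Proposition \ref{PropFromInverse}\eqref{EpsilonsCommute}, so no ordering or multiplicity issue arises.

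For the second expression, I would show $\e{A\cup tB} = \e{(tss^*t^*A)\cup tB}$. The point is that every factor $\e{tu_j}$ with $u_j\in B$ satisfies $\e{tu_j} = \e{tu_ju_j^*t^*}\e{tu_j} = \e{tu_ju_j^*t^*}[tu_j][u_j^*t^*]$; and since $\e{ss^*} = [ss^*]$ is idempotent and $ss^* \geq u_ju_j^*$ is \emph{not} automatic — rather, one uses that in the product $\e B[s]$ we may as well assume $B$ is an $ss^*$-set (or at least that the relevant idempotent $\e{tss^*t^*}$ can be inserted). Concretely: from $[s] = \e{ss^*}[s]$ (again \ref{IdempotentsInBracket}(i) plus \eqref{DefSG.iii}) we get $y = \e B[s] = \e B\,\e{ss^*}[s] = \e{B}\e{ss^*}[s]$, and then moving $\e{ss^*}$ leftwards through $[t]$ in the product $xy$ — here using \eqref{CommutRelation} — produces the extra factor $\e{tss^*t^*}$, and multiplying $\e{tss^*t^*}$ into each $\e{s_i}$ ($s_i\in A$) via Proposition \ref{IdempotentsInBracket}(iii) replaces $\e{s_i}$ by $\e{tss^*t^*s_i}$, i.e. replaces $A$ by $tss^*t^*A$. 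One checks this changes nothing thanks to the commutativity and idempotency of the $\e{(-)}$, giving $\e{A\cup tB} = \e{tss^*t^*A\cup tB}$.

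Finally, for the normal-form claim: assume $x = \e A[t]$ and $y = \e B[s]$ are in normal form, so $A$ is a $tt^*$-set containing $t$ and $B$ is an $ss^*$-set containing $s$. Set $C = (tss^*t^*A)\cup(tB)$ and $r = ts$; I must verify $C$ is an $rr^*$-set containing $r$ in the sense of Definition \ref{DefineESet}, i.e. $rr^*\in C$, and $cc^* = rr^*$ for all $c\in C$. For $c = t u$ with $u\in B$: $cc^* = tuu^*t^* = tss^*t^*$ since $uu^* = ss^*$; and $tss^*t^* = ts(ts)^* = rr^*$. For $c = tss^*t^*a$ with $a\in A$: $cc^* = tss^*t^*aa^*tss^*t^* = tss^*t^*\,tt^*\,tss^*t^*$ (using $aa^* = tt^*$) $= tss^*t^* = rr^*$ after the semilattice simplification $tss^*t^*tt^* = tss^*t^*$. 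And $r = ts = t s \in tB$ since $s\in B$, so $r\in C$; likewise $rr^* = tss^*t^* = tss^*t^*(tt^*) = tss^*t^*\cdot(\text{the element }tt^*\text{ of }A\text{, call it }e)$ lies in $tss^*t^*A$ since $tt^* \in A$. Hence $C$ is an $rr^*$-set containing $r$, i.e. $xy = \e C[r]$ is in normal form.

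The main obstacle I anticipate is not any single hard step but the bookkeeping in the second and third paragraphs: making precise exactly where the idempotent $\e{tss^*t^*}$ enters (it is forced by $[s] = \e{ss^*}[s]$ combined with the commutation past $[t]$), and confirming that absorbing it into $A$ yields literally the set $tss^*t^*A$ and that the resulting $\varepsilon$-product is unchanged. Once the identity $[t][s] = \e t[ts]$ (equation \eqref{IntermCalculation}) and the commutation relation \eqref{CommutRelation} are in hand, everything else is the inverse-semigroup arithmetic of idempotents ($e f = f e$, $e^2 = e$, $tt^* t = t$), carried out at the level of the generating set $\G$.
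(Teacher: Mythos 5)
Your proposal is correct and follows essentially the same route as the paper's proof: the first formula is obtained exactly as in the paper (commute $[t]$ past $\e{B}$ via $[t]\e{B}=\e{(tB)}[t]$, use $[t][s]=\e t[ts]$, and absorb $\e t$ using $t\in A$), and your normal-form verification at the end is identical to the paper's. For the second formula the paper extracts the idempotent $\e q$, with $q=tss^*t^*$, directly from $[ts]=[ts(ts)^*][ts]=\e q[ts]$ and then uses $\e q\e{A}=\e{(qA)}$; your variant produces the same factor by writing $[s]=\e{ss^*}[s]$ and commuting $\e{ss^*}$ past $[t]$ (indeed $[t]\e{ss^*}=\e{tss^*}[t]$ and $\e{tss^*}=\e{q}\e{tss^*}$), which works but costs a little extra bookkeeping to reabsorb the leftover $\e{tss^*}$ and the trailing $\e t$ (the latter now needs $\e{(qA)}\e t=\e{(qA)}$, which follows from $qt\in qA$). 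One caution: the intermediate identity you announce, $\e{A\cup tB}=\e{(tss^*t^*A)\cup (tB)}$, is \emph{false} as a bare identity of idempotents under the hypothesis $t\in A$ alone --- take $B=\emptyset$ and $A=\{t\}$, where it would assert $\e t=\e{qt}$, contradicting $\partial(\e t)=tt^*$ versus $\partial(\e{qt})=tss^*t^*$ whenever these differ. The identity only holds after right multiplication by $[ts]$ (equivalently, in the presence of the factor $[t][s]$ that your actual computation carries along, or under the extra assumption $s\in B$), and that is the statement both your mechanism and the paper's argument in fact establish.
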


\begin{proof} Using Proposition\ref{PropFromInverse}\eqref{CommutRelation}
it is easy to see that $[t]\e{B} =\e{(tB)}[t]$, so
  $$
  xy = \e {A}[t]\ \e{B}[s] =
  \e {A} \e{(tB)}[t][s] =
  \e {A \cup (tB)}[t][s] = \ldots
  $$
  Letting $B = A \cup (tB)$, the above equals
  $$
  \ldots =
  \e B[t][t^*][t][s] =
  \e B[t][t^*][ts] =
  \e B\e{t}[ts] =
  \e {B\cup \{t\}}[ts] =
  \e B[ts],
  $$
  proving the first expression for $xy$ above.  Next, observe that
  $$
  [ts] = [ts]  [s^*t^*]  [ts]  = [ts s^*t^*]  [ts]
=  \e q  [ts],
  $$
  where $q=ts s^*t^*$.  Therefore,
  $$
  xy = \e {A\cup(tB)}\e q[ts] =
  \e q\e {A}\e{(tB)}[ts] \eq{\textup{Proposition}~\ref{IdempotentsInBracket}(iii)} $$$$ =
  \e {(qA)}\e{(tB)}[ts] =
  \e {(qA)\cup(tB)}[ts],
  $$
  proving the second expression for $xy$ above.  Assuming that
$x_i=\e{A_i}[t_i]$ is in normal form for $i=1,2$, we have for all $s\in A$
that $ss^* = tt^*$, so
  $$
  qs(qs)^* = qss^* = ts s^*t^* \ tt^* =  ts s^*t^*,
  $$
  while for $s\in B$, one has that $ss^* = ss^*$, so
  $$
  ts(ts)^* =   tss^*t^* = tss^*t^*.
  $$
  Moreover, since $s\in B$,
  $$
  ts\in tB \subseteq (qA)\cup(tB),
  $$
  and, because $tt^*\in A$,
  $$
  tss^*t^* = tss^*t^* t t^* \in qA \subseteq (qA)\cup(tB),
  $$
  hence proving the last assertion.
\end{proof}

Proposition~\ref{FirstCannonical} may be used to prove that $\SG$ is a regular semigroup.

\begin{proposition} \label{Regular} For every $x\in \SG$ there exists $\bar
x\in\SG$ such that $x\bar xx=x$ and $\bar xx\bar x=\bar x$.
\end{proposition}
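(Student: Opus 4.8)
The plan is to exhibit an explicit quasi-inverse $\bar x$ for a given $x\in\SG$, obtained by writing $x$ in normal form and ``reflecting'' it through the inverse operation of $\G$.

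First I would use Proposition~\ref{FirstCannonical} to write $x=\e A[t]$ in normal form, so that $t\in\G$ and $A$ is a finite $tt^*$\nb set containing $t$; in particular $tt^*\in A$ and $ss^*=tt^*$ for every $s\in A$. I then propose
$$
\bar x = \e{t^*A}[t^*].
$$
The first thing to verify is that this is again in normal form. For $s\in A$ one has $(t^*s)(t^*s)^* = t^*ss^*t = t^*tt^*t = t^*t$, so every element of $t^*A$ has range idempotent $t^*t=t^*(t^*)^*$; moreover $t^* = t^*(tt^*)\in t^*A$ because $tt^*\in A$. Hence $t^*A$ is a finite $t^*t$\nb set containing $t^*$ and $\bar x$ is in normal form.

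Next I would compute $x\bar x$ using Proposition~\ref{prop:FormulaProd}, which applies since $t\in A$. Because $ss^*=tt^*$ for $s\in A$ we have $tt^*s = ss^*s = s$, so $tt^*A=A$; also, reading the formula with ``$s$'' $=t^*$ and ``$B$'' $=t^*A$, one gets $tss^*t^* = tt^*tt^* = tt^*$ and $ts=tt^*$. Substituting,
$$
x\bar x = \e{(tt^*A)\cup(tt^*A)}[tt^*] = \e A[tt^*].
$$
Multiplying this by $x$ on the right (the formula again applies since $tt^*\in A$), and using $tt^*\cdot t=t$ together with $(tt^*)t t^*(tt^*)^* = tt^*$, one obtains
$$
(x\bar x)x = \e{(tt^*A)\cup(tt^*A)}[t] = \e A[t] = x,
$$
which is the identity $x\bar xx=x$.

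Finally, the second identity comes for free by symmetry: $\bar x$ is itself in normal form, and applying the same construction to it produces $\overline{\bar x} = \e{(t^*)^*(t^*A)}[(t^*)^*] = \e{tt^*A}[t] = \e A[t] = x$. Thus the identity just proved, applied to $\bar x$ in place of $x$, reads $\bar x\,\overline{\bar x}\,\bar x = \bar x$, that is, $\bar xx\bar x = \bar x$. The only real work is the bookkeeping inside Proposition~\ref{prop:FormulaProd}, and the one point to watch is checking the membership conditions ($tt^*\in A$ and $t^*\in t^*A$) that make the normal-form multiplication formula applicable at each step; once that is granted, every simplification of products reduces to the single relation $ss^*=tt^*$ valid on a $tt^*$\nb set.
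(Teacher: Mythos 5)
Your argument is correct, and in fact your candidate quasi-inverse is the same element as the paper's: the paper writes $x=\e{s_1}\cdots\e{s_n}[t]$ and sets $\bar x=[t^*]\e{s_1}\cdots\e{s_n}$, which by the commutation relation of Proposition~\ref{PropFromInverse}(ii) equals your $\e{t^*A}[t^*]$. The difference is in the verification. The paper needs only the bare decomposition from the first sentence of Proposition~\ref{FirstCannonical} (not the normal-form refinements (a)--(b)) and checks $x\bar xx=x$ by a three-line direct computation: the $\e{s_i}$ commute and are idempotent, so the middle block collapses and one is left with $[t][t^*][t]=[t]$; the second identity is dispatched the same way. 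You instead route everything through the normal form and the set-level multiplication formula of Proposition~\ref{prop:FormulaProd}, which forces you to verify the membership hypotheses ($t\in A$, $tt^*\in A$, $t^*\in t^*A$, and---left implicit but immediate from $t\in A$---$t^*t\in t^*A$ so that $\bar x$ really is in normal form) at each step. What your version buys is the explicit identification $x\bar x=\e A[tt^*]$ and the clean symmetry argument $\overline{\bar x}=x$, which makes the second identity genuinely free rather than ``just as easy''; what it costs is reliance on heavier machinery where elementary relation-juggling suffices. Both proofs are complete and legitimate at that point in the paper.
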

\begin{proof}
  Given $x$ in $\G$, use Proposition~\ref{FirstCannonical} to write
  $
  x = \e {s_1} \ldots \e {s_n} [t],
  $
  with $s_1,\ldots, s_n,t\in\G$,
  and set $\bar x = [t^*]\e {s_1} \ldots \e {s_n}$.
  Then, using Proposition~\ref{PropFromInverse}\eqref{EpsilonsCommute}, we get
  $$
  x\bar xx =
  \e {s_1} \ldots \e {s_n} [t] [t^*]\e {s_1} \ldots \e {s_n} \e {s_1} \ldots \e
{s_n} [t] =
  \e {s_1} \ldots \e {s_n} [t] [t^*][t] =
  \e {s_1} \ldots \e {s_n} [t] = x.
  $$
  The proof that $\bar xx\bar x=\bar x$ is just as easy.
\end{proof}

Before we proceed we need to introduce the following important concept:

\begin{definition} \label{DefPRep} Let $\T$ be a semigroup. A \emph{partial homomorphism} of $\G$ in $\T$ is a map
  $$
  \pi: \G \to \T
  $$
  such that for all $s, t\in \G$, one has that
  \begin{enumerate}[(i)]
  \item $\pi(s)\pi(t)\pi(t^*) = \pi(st)\pi(t^*)$.
  \item $\pi(s^*)\pi(s)\pi(t) = \pi(s^*)\pi(st)$,
  \item $\pi(s)\pi(s^*)\pi(s) = \pi(s)$.
  \end{enumerate}
\end{definition}

Notice that the definition of partial homomorphisms does not require $\T$ to
be an \emph{inverse} semigroup.  However, should $\T$ happen to be an inverse
semigroup, then Definition~\ref{DefPRep}(iii) applied to both $s$ and $s^*$, together
with the uniqueness of inverses, immediately implies that
  \begin{equation}\label{PreserveStar}
  \pi(s^*) = \pi(s)^* \quad\mbox{for all }s\in \G.
  \end{equation}
Hence, if $H$ is an inverse semigroup, the axioms (i)-(iii) in Definition~\ref{DefPRep} are equivalent to (i)-(ii) plus~\eqref{PreserveStar}.
If $\T$ is the semigroup $\Ls(\hils)$ of all bounded linear maps on a Hilbert space $\hils$ with composition of operators as the semigroup multiplication, partial homomorphisms $\G\to \T$ are also called \emph{partial
representations} of $\G$ on $\hils$.

It is evident that the map
  \begin{equation}\label{CanonicalMap}
  \iota_G\colon \G\to\SG,\quad s\in\G \mapsto [s]\in\SG
  \end{equation}
  is a partial homomorphism of $\G$ in $\SG$, and we will call it the
\emph{canonical partial homomorphism}.  The universal property of $\SG$ tells us
that every partial homomorphism of $\G$ factors through the canonical one.
More precisely:

\begin{proposition}\label{prop:UniProp} For every semigroup $\T$ and every partial homomorphism $\pi:\G\to\T$, there exists a unique semigroup homomorphism
$\Sh\pi:\SG\to\T$, such that the diagram
  $$
  \xymatrix{
  \G \ar[d]_{\iota_G} \ar[r]^{\pi}  & \T \\
  \SG\ar[ur]_{\Sh\pi}   &}
  $$

  commutes, where the vertical arrow is the canonical partial homomorphism.
\end{proposition}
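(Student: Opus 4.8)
The plan is to deduce the statement directly from the fact that, by Definition~\ref{DefSG}, $\SG$ is the universal semigroup presented by the generators $\{[s]\st s\in\G\}$ subject to the relations \ref{DefSG}(i)--(iii). Concretely, let $F$ be the free semigroup on a set of symbols in bijection with $\G$, and let $\rho$ be the smallest congruence on $F$ containing all the pairs coming from \ref{DefSG}(i)--(iii); then $\SG=F/\rho$, and $[s]$ denotes the $\rho$-class of the symbol attached to $s$. The universal property of this quotient is simply that a semigroup homomorphism out of $\SG$ is the same thing as a function on the generating symbols which respects the relations \ref{DefSG}(i)--(iii). The crucial (and, once noticed, trivial) observation is that these relations are verbatim the axioms \ref{DefPRep}(i)--(iii) for a partial homomorphism, with $[s]$ replaced by $\pi(s)$.

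For existence, given a partial homomorphism $\pi\st\G\to\T$, the function sending the symbol attached to $s$ to $\pi(s)\in\T$ extends uniquely to a semigroup homomorphism $\varphi\st F\to\T$. By the observation just made, $\varphi$ sends both sides of each defining relation of $\SG$ to the same element of $\T$, so it is constant on $\rho$-classes and hence factors as $\varphi=\Sh\pi\circ q$, where $q\st F\to\SG$ is the quotient map and $\Sh\pi\st\SG\to\T$ is a semigroup homomorphism with $\Sh\pi([s])=\pi(s)$ for all $s\in\G$. Since $\Sh\pi(\iota_G(s))=\Sh\pi([s])=\pi(s)$, the triangle commutes. For uniqueness, recall that the elements $[s]$, $s\in\G$, generate $\SG$ as a semigroup: this is explicit in Proposition~\ref{FirstCannonical}, where each $x\in\SG$ is written as a product of the elements $\e{s_i}=[s_i][s_i^*]$ and $[t]$, hence as a product of generators. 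Any semigroup homomorphism $\psi\st\SG\to\T$ making the diagram commute satisfies $\psi([s])=\psi(\iota_G(s))=\pi(s)=\Sh\pi([s])$ for every $s$, so $\psi$ and $\Sh\pi$ agree on a generating set of $\SG$ and therefore coincide.

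There is no real obstacle in this proof; it is a purely formal consequence of the way $\SG$ was defined. The only point demanding any attention is the bookkeeping in the first paragraph — spelling out what ``the universal semigroup defined via generators and relations'' means by passing through the free semigroup $F$ and the congruence $\rho$ generated by the relations — together with the verification that the axioms for a partial homomorphism in Definition~\ref{DefPRep} are exactly the defining relations of $\SG$ in Definition~\ref{DefSG}.
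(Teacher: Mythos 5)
Your proof is correct and is exactly the argument the paper has in mind: the paper's own proof is simply ``Obvious from the definition of $\SG$,'' and what you have written is the standard unwinding of that observation (the defining relations of $\SG$ are verbatim the axioms of a partial homomorphism, so existence follows from the universal property of the quotient of the free semigroup, and uniqueness follows because the $[s]$ generate $\SG$). No issues.
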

\begin{proof} Obvious from the definition of $\SG$.
\end{proof}

This result has a very useful consequence, as follows.

\begin{corollary} \label{cor:IdempotentsInPrep}
  Let $\pi:\G\to\T$ be a partial homomorphism of $\G$ in a semigroup $\T$.
Then, given any $e\in E(\G)$ one has that
  \begin{enumerate}[(i)]
  \item $\pi(e)$ is idempotent,
  \item if $s\in \G$, then $\pi(e)\pi(s)=\pi(es)$, and $\pi(s)\pi(e) = \pi(se)$.
  \end{enumerate}
\end{corollary}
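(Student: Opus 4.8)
The plan is to deduce everything from the universal property, so that no fresh computation is needed. By Proposition~\ref{prop:UniProp} applied to the partial homomorphism $\pi\colon\G\to\T$, there is a semigroup homomorphism $\Sh\pi\colon\SG\to\T$ with $\Sh\pi([s])=\pi(s)$ for every $s\in\G$. Since a homomorphism of semigroups carries idempotents to idempotents, part~(i) follows once we recall that $[e]$ is idempotent in $\SG$; but this is exactly Proposition~\ref{IdempotentsInBracket}(i), whence $\pi(e)=\Sh\pi([e])$ is idempotent.

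For part~(ii) I would simply apply $\Sh\pi$ to the identities $[e][s]=[es]$ and $[s][e]=[se]$ furnished by Proposition~\ref{IdempotentsInBracket}(ii), obtaining
$$
\pi(e)\pi(s)=\Sh\pi([e])\Sh\pi([s])=\Sh\pi([e][s])=\Sh\pi([es])=\pi(es),
$$
and symmetrically $\pi(s)\pi(e)=\pi(se)$. In short, the corollary is just the image under $\Sh\pi$ of facts already established about $\SG$.

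The only point worth a moment's thought is whether the universal property is available in the generality we need, and it is: Proposition~\ref{prop:UniProp} is stated for an \emph{arbitrary} semigroup $\T$, not necessarily inverse, which is precisely the hypothesis of the present statement, so there is no gap. If one preferred an argument internal to $\T$, one could instead rerun the proofs of Propositions~\ref{PropFromInverse} and~\ref{IdempotentsInBracket} verbatim with $\pi(s)$ in place of $[s]$ and $\e t=\pi(t)\pi(t^*)$, since those proofs invoke only the defining relations (i)--(iii), which are exactly the partial-homomorphism axioms of Definition~\ref{DefPRep}; but the factorization through $\SG$ renders this unnecessary. I do not anticipate any real obstacle here: the result is a formal consequence of Proposition~\ref{prop:UniProp} together with Proposition~\ref{IdempotentsInBracket}.
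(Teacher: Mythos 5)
Your proposal is correct and follows essentially the same route as the paper: both factor $\pi$ through $\SG$ via Proposition~\ref{prop:UniProp} and then push the identities of Proposition~\ref{IdempotentsInBracket} forward along $\Sh\pi$. The only cosmetic difference is that the paper deduces (i) from (ii) by setting $s=e$, whereas you invoke the idempotency of $[e]$ directly; these are interchangeable.
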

\begin{proof}
  Letting $\Sh\pi$ be as in Proposition~\ref{prop:UniProp} we have
  $$
  \pi(e)\pi(s) = \Sh\pi([e])\Sh\pi([s]) = \Sh\pi([e][s]) \eq{\textup{Proposition}~\ref{IdempotentsInBracket}(ii)}
\Sh\pi([es])  = \pi(es),
  $$
  proving the first part of (ii), while the second part follows similarly.  As
for (i), it follows from (ii) by taking $s=e$.
\end{proof}

If we take $\pi$ to be the identity map on $\G$, then it is evidently a partial homomorphism of $\G$ in itself, so Proposition~\ref{prop:UniProp} provides a semigroup
homomorphism
  \begin{equation}\label{DegreeMap}
  \partial :\SG \to \G,
  \end{equation}
  called the \emph{degree map}, such that $\partial([s]) = s$, for all $s$ in $\G$.

\begin{proposition}\label{prop:NormalFormDegree} Let $x\in\SG$ be written in normal form as
  $x = \e A [t].$ Then $\partial(x)=t$.  Moreover, $x$ is idempotent if and only if $\partial(x)=t$ is idempotent, in which case we have $x=\e A$.
  Hence $\partial$ is an \emph{essentially injective} homomorphism in the sense that pre-images of idempotents are idempotents.\footnote{Essentially injective homomorphisms are also called \emph{idempotent pure}
  homomorphisms in \cite{LawsonMargolisSteinberg:Expansions}.}
  In particular, the idempotents of $\SG$ have the form $\e A$ for some finite subset $A\sbe \G$ which can be
  chosen to be an $e$-set for some $e\in E(\G)$.
\end{proposition}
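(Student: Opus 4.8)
The plan is to exploit the degree map $\partial\colon\SG\to\G$ together with the normal form from Proposition~\ref{FirstCannonical}. First I would compute $\partial$ on a normal-form element. Since $\partial$ is a homomorphism with $\partial([s])=s$, one has $\partial(\e s)=\partial([s][s^*])=ss^*$, and hence $\partial(\e A)=\prod_{s\in A}ss^*$. The definition of normal form, rephrased in \eqref{FirstCannonicalSetLevel}, says $A$ is a finite $tt^*$-set containing $t$, so every factor $ss^*$ equals the idempotent $tt^*$ and therefore $\partial(\e A)=tt^*$. Consequently $\partial(x)=\partial(\e A)\,\partial([t])=tt^*t=t$, which gives the first assertion.

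Next I would settle the idempotency equivalence. One direction is immediate: $\partial$ is a semigroup homomorphism, so it sends idempotents to idempotents, whence $x$ idempotent forces $\partial(x)=t$ to be idempotent. For the converse, suppose $t$ is idempotent. Then $t^*=t$, so $tt^*=t$ and $A$ is in fact a (finite) $t$-set containing $t$; moreover $[t]=\e t$ by Proposition~\ref{IdempotentsInBracket}(i). Since $t\in A$, this $\e t$ already occurs among the pairwise commuting factors of $\e A$ (Proposition~\ref{PropFromInverse}\eqref{EpsilonsCommute}), and as it is idempotent, absorbing it yields $x=\e A[t]=\e A\,\e t=\e A$. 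This shows simultaneously that $x$ is idempotent, being a product of commuting idempotents, and that $x=\e A$ in the idempotent case.

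From here the remaining two assertions are formal. For essential injectivity of $\partial$: given $x$ with $\partial(x)$ idempotent, write $x$ in normal form as $\e A[t]$; then $t=\partial(x)$ is idempotent, so by the step just carried out $x=\e A$ is idempotent. For the description of the idempotents: any idempotent of $\SG$, written in normal form $\e A[t]$, equals $\e A$ with $A$ a finite $e$-set for the idempotent $e=t\in E(\G)$; conversely, for any finite $A\sbe\G$ the element $\e A$ is idempotent because the $\e s$ are commuting idempotents. Hence the idempotents of $\SG$ are exactly the elements $\e A$ with $A$ a finite $e$-set, $e\in E(\G)$.

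I do not expect a serious obstacle here: everything reduces to the normal form of Proposition~\ref{FirstCannonical} and the homomorphism property of the degree map. The only point needing a little care is the converse half of the idempotency equivalence, where one must observe that $\e t$ is already one of the factors constituting $\e A$ --- which is precisely why the normal form was arranged so that $t\in A$ --- so that multiplying $\e A$ by $[t]=\e t$ leaves it unchanged.
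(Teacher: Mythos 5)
Your proposal is correct and follows essentially the same route as the paper: compute $\partial$ on the normal form to get $\partial(x)=tt^*t=t$, then use $[t]=\e t$ for idempotent $t$ together with $t\in A$ to absorb the last factor and conclude $x=\e A$. You are somewhat more explicit than the paper about the converse direction of the idempotency equivalence and about deducing the final two assertions, but no new ideas are involved.
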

\begin{proof}  We have
  $$
  \partial(x) = \partial( \e {s_1} \ldots \e {s_n} [t]) =
  \partial([s_1][s_1^*]\ldots [s_n][s_n^*][t])=
  s_1s_1^*\ldots s_ns_n^*t =
  tt^*t = t.
  $$
In particular, if $x$ is idempotent then so is $t$. In this case, $t=\e t$ (by Proposition~\ref{IdempotentsInBracket}(i))
so that $x=\e A\e t=\e {A\cup\{t\}}=\e A$ because $A$ contains $t$.
\end{proof}

\begin{theorem} $\SG$ is an inverse semigroup.
\end{theorem}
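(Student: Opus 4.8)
The plan is to reduce the statement to the classical characterization of inverse semigroups: a semigroup is an inverse semigroup if and only if it is regular and its idempotents commute (see, e.g., Howie or Lawson, \emph{Inverse Semigroups}). Regularity of $\SG$ has already been proved in Proposition~\ref{Regular}, so the only thing left to check is that any two idempotents of $\SG$ commute, and everything needed for that has in fact been assembled in the preceding results.

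First I would invoke Proposition~\ref{prop:NormalFormDegree}, which tells us that every idempotent of $\SG$ has the form $\e A$ for some finite subset $A\sbe\G$ (one may even take $A$ to be an $e$-set for a suitable $e\in E(\G)$, though this refinement is not needed here). Given two idempotents $\e A$ and $\e B$ with $A=\{s_1,\ldots,s_n\}$ and $B=\{u_1,\ldots,u_m\}$, I would expand $\e A\e B=\e{s_1}\cdots\e{s_n}\e{u_1}\cdots\e{u_m}$ and use Proposition~\ref{PropFromInverse}\eqref{EpsilonsCommute}, that the various $\e{(\cdot)}$ commute with one another, together with Proposition~\ref{PropFromInverse}(i), that each $\e{(\cdot)}$ is idempotent, to freely permute the factors and delete repetitions. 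This yields $\e A\e B=\e{A\cup B}$, and by the symmetric computation $\e B\e A=\e{A\cup B}$ as well; hence $\e A\e B=\e B\e A$. (This is exactly the well-definedness of $\e A$ already noted after Proposition~\ref{FirstCannonical}, applied to $A\cup B$.)

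With regularity and commutativity of idempotents in hand, the conclusion that $\SG$ is an inverse semigroup follows from the cited classical theorem; alternatively, one can give a direct self-contained argument, showing that the generalized inverse produced in Proposition~\ref{Regular} is the unique element $\bar x$ satisfying $x\bar xx=x$ and $\bar xx\bar x=\bar x$, the uniqueness again being a formal consequence of the idempotents commuting.

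I do not anticipate a genuine obstacle here: all the substantive work (the explicit presentation, the normal form, regularity, and the structure of the idempotents) was done in the earlier propositions. The only point that deserves a sentence of care is the identity $\e A\e B=\e{A\cup B}$ in the case where $A$ and $B$ overlap, which relies on each individual $\e s$ being idempotent and not merely on the $\e s$ commuting.
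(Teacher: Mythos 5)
Your proposal is correct and follows essentially the same route as the paper: regularity from Proposition~\ref{Regular}, the observation via Proposition~\ref{prop:NormalFormDegree} that every idempotent has the form $\e A$, and commutativity of these from Proposition~\ref{PropFromInverse}\eqref{EpsilonsCommute}, then the classical criterion that a regular semigroup with commuting idempotents is inverse. The only difference is that you spell out the computation $\e A\e B=\e{A\cup B}=\e B\e A$ (and the role of idempotency of each $\e s$ when $A$ and $B$ overlap), which the paper leaves implicit.
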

\begin{proof}
We have already seen in Proposition~\ref{Regular} that $\SG$ is regular.  It therefore
suffices to prove that the idempotent elements of $\SG$ commute (see \cite[Chapter1, Theorem~3]{Lawson:InverseSemigroups}).
By Proposition~\ref{prop:NormalFormDegree}, every idempotent of $\SG$ has the form $\e{A}$ for some $A\sbe\G$.
The fact that idempotents commute then follows from Proposition~\ref{PropFromInverse}\eqref{EpsilonsCommute}.
\end{proof}

Recall that an inverse semigroup $\G$ is $E$-unitary if for all $s\in \G$ and $e\in E(\G)$,
$$es\in E(\G)\Rightarrow s\in E(\G).$$

\begin{proposition}
The inverse semigroup $\SG$ is $E$-unitary if and only if $\G$ is $E$-unitary.
\end{proposition}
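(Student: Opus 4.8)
The plan is to prove both implications using the degree map $\partial\colon\SG\to\G$ from~\eqref{DegreeMap} and the structural description of idempotents of $\SG$ given in Proposition~\ref{prop:NormalFormDegree}. The key leverage is that $\partial$ is a surjective homomorphism sending $[s]$ to $s$, is essentially injective (pre-images of idempotents are idempotents), and that every idempotent of $\SG$ has the form $\e A$ for a finite subset $A\sbe\G$.

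For the easy direction, suppose $\SG$ is $E$-unitary and let $s\in\G$, $e\in E(\G)$ with $es\in E(\G)$. Lift to $\SG$: the element $[e][s]=[es]$ is idempotent in $\SG$ since $es\in E(\G)$ and idempotents of $\G$ are sent by $\iota_G$ to idempotents (Proposition~\ref{IdempotentsInBracket}(i)). Now $[e]$ is idempotent in $\SG$, and $[e][s]$ idempotent. Applying the $E$-unitary hypothesis on $\SG$ to the pair $([e],[s])$ — after checking $[e][s]\in E(\SG)$, which we just did — yields $[s]\in E(\SG)$. Applying $\partial$ gives $s=\partial([s])\in E(\G)$, as wanted.

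For the converse, suppose $\G$ is $E$-unitary and let $x\in\SG$, $f\in E(\SG)$ with $fx\in E(\SG)$; we must show $x\in E(\SG)$. Write $x=\e A[t]$ in normal form, so $\partial(x)=t$, and write $f=\e B$ with $B$ a finite subset of $\G$ (Proposition~\ref{prop:NormalFormDegree}). Since $fx$ is idempotent, $\partial(fx)=\partial(f)\partial(x)$ is idempotent in $\G$; but $\partial(f)\in E(\G)$ (again by essential injectivity, or directly since $\partial(\e B)=\partial(\e{s_1}\cdots\e{s_n})$ is a product of idempotents) and $\partial(x)=t$, so $\partial(f)t\in E(\G)$. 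The $E$-unitary hypothesis on $\G$ now forces $t\in E(\G)$. By Proposition~\ref{prop:NormalFormDegree}, $\partial(x)=t$ idempotent implies $x$ is idempotent. Hence $\SG$ is $E$-unitary.

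The only point requiring a moment's care — the ``main obstacle,'' though it is mild — is confirming in the converse direction that $\partial(f)\in E(\G)$ so that the $E$-unitary hypothesis on $\G$ applies; this is immediate from essential injectivity of $\partial$ (Proposition~\ref{prop:NormalFormDegree}) applied to the idempotent $f$, or alternatively from the explicit form $\partial(\e B)=\prod_{s\in B}ss^*\in E(\G)$. Everything else is a routine application of the degree map together with the normal form, and the argument is symmetric enough that both implications come out cleanly.
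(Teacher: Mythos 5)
Your proof is correct and follows essentially the same route as the paper's: both implications are obtained by transporting the $E$-unitary condition along the degree map $\partial$, using $[e][s]=[es]$ for one direction and the essential injectivity of $\partial$ (Proposition~\ref{prop:NormalFormDegree}) for the other. The extra bookkeeping you include (writing $x$ and $f$ in normal form) is harmless but not needed beyond what the paper already uses.
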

\begin{proof}
Suppose that $\G$ is $E$-unitary, and let $x\in\SG$ and $y\in E(\SG)$ with $yx\in E(\SG)$. Then $e\defeq\partial(y)\in E(\G)$, $s\defeq\partial(x)\in \G$ and $es=\partial(yx)\in E(\G)$. Since $\G$ is $E$-unitary, we have
$\partial(x)=s\in E(\G)$, so that $x\in E(\SG)$ because $\partial$ is essentially injective.
Conversely, assume that $\SG$ is $E$-unitary and let $s\in \G$ and $e\in E(\G)$ with $es\in E(\G)$. Then $[e][s]=[es]\in E(\SG)$ (by Proposition~\ref{IdempotentsInBracket}(i)) so that $[s]\in E(\S\G)$ because $\SG$ is
$E$-unitary. Therefore, $s=\partial([s])\in E(\G)$.
\end{proof}

Before ending this section, let us remark at this point that our inverse semigroup $\SG$ is canonically isomorphic to the inverse semigroup
$$H=\{(A,t): A\sbe \G\mbox{ finite }, t,tt^*\in A \mbox{ and }ss^*=tt^*\mbox{ for all }s\in A \}$$
introduced in \cite{LawsonMargolisSteinberg:Expansions} (see our introduction). In fact, this obviously follows from the same universal property (of classifying partial homomorphisms) that both have. An explicit isomorphism
$\SG\cong H$ is given by taking an element $x=\epsilon_A[t]\in\SG$ in normal form and sending it to the pair $(A,t)\in H$. This is the same map one gets by first considering the map $\pi\colon\G\to H$, $t\mapsto
(\{tt^*,t\},t)$ and checking that $\pi$ is a partial homomorphism. The corresponding homomorphism $\tilde\pi\colon\SG\to H$ obtained from the universal property of $\SG$ is then equal to map $\epsilon_A[t]\mapsto (A,t)$ just
mentioned. In particular, this shows that this map is well-defined and, even more important, that normal forms are unique: if $x=\epsilon_A[t]=\epsilon_B[s]$ are two ways of writing $x$ in normal form, then $A=B$ and $s=t$.
Later, in section~\ref{sec:CanonicalPartialAction}, we are going to prove uniqueness of normal forms without using the inverse semigroup $H$ above: we construct a canonical partial action of $G$ that "separates" normal
forms.

\section{Partial homomorphisms and partial actions}

In Definition~\ref{DefPRep}, we defined partial homomorphisms of
inverse semigroups.  Now we will find equivalent conditions for checking that a
map into an inverse semigroup
is a partial homomorphism.  As before, we fix an inverse semigroup $\G$.

\begin{proposition} \label{prop:CharacterizationOfPartialRepresentations}
  Let $\T$ be an inverse semigroup and let $\pi:\G\to \T$ be a map.  Then
$\pi$ is a partial homomorphism if and only if, for all $s$ and $t$ in $\G$,
one has that
  \begin{enumerate}[(i)]
  \item $\pi(s^*)=\pi(s)^*$,
  \item $\pi(s)\pi(t) \leq \pi(st)$, and
  \item $\pi(s) \leq \pi(t)$ whenever $s\leq t$.
  \end{enumerate}
\end{proposition}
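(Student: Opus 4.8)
The plan is to prove both implications directly from the definitions, using the results already established about $\SG$ and partial homomorphisms.

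\medskip

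\textbf{The forward direction.} Suppose $\pi\colon\G\to\T$ is a partial homomorphism into an inverse semigroup $\T$. Condition (i) is exactly \eqref{PreserveStar}, which we have already derived from Definition~\ref{DefPRep}(iii). For condition (ii), I would start from the right-hand side and use Definition~\ref{DefPRep}(i): since $\pi(st)\pi(t^*)\pi(t) = \pi(s)\pi(t)\pi(t^*)\pi(t) = \pi(s)\pi(t)$ (the last equality by Definition~\ref{DefPRep}(iii) applied to $t$, once we rewrite $\pi(t^*)=\pi(t)^*$), we get $\pi(s)\pi(t) = \pi(st)\bigl(\pi(t)^*\pi(t)\bigr)$, which exhibits $\pi(s)\pi(t)$ as $\pi(st)$ multiplied by the idempotent $\pi(t)^*\pi(t)$; hence $\pi(s)\pi(t)\leq\pi(st)$ in the natural partial order of $\T$. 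For condition (iii), write $s\leq t$ as $s = te$ for some $e\in E(\G)$ (e.g.\ $e=t^*s = s^*s$); then by Corollary~\ref{cor:IdempotentsInPrep}(ii) we have $\pi(s)=\pi(te)=\pi(t)\pi(e)$, and since $\pi(e)$ is idempotent by Corollary~\ref{cor:IdempotentsInPrep}(i), this gives $\pi(s)\leq\pi(t)$.

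\medskip

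\textbf{The reverse direction.} Conversely, assume $\pi$ satisfies (i)--(iii); I must recover the three axioms of Definition~\ref{DefPRep}. Axiom (iii), $\pi(s)\pi(s^*)\pi(s)=\pi(s)$: using (i) this reads $\pi(s)\pi(s)^*\pi(s)=\pi(s)$, which holds in \emph{any} inverse semigroup. For axiom (i), $\pi(s)\pi(t)\pi(t^*)=\pi(st)\pi(t^*)$: the right-hand side is $\pi(st)\pi(t)^*$ by (i). I would show both sides are the meet, or rather show the relevant $\leq$ in both directions. By (ii), $\pi(s)\pi(t)\leq\pi(st)$, and since right-multiplication by a fixed element is order-preserving, $\pi(s)\pi(t)\pi(t^*)\leq\pi(st)\pi(t^*)$. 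For the reverse inequality, note $st\leq st\cdot t^*t$ is not quite what I want; instead observe $st = (st)(t^*t)\cdot$ ... more carefully: in $\G$, $st\,t^*t = st$, so $st\leq$ nothing new. The trick is rather that $\pi(st)\pi(t^*) = \pi(st)\pi(t)^*$ and I want to bound this by $\pi(s)\pi(t)\pi(t^*)$. Write $st\,t^* \leq s$ in $\G$ (since $st\,t^* = s(tt^*)$ with $tt^*\in E(\G)$), so by (iii) $\pi(st\,t^*)\leq\pi(s)$; also $\pi(st\,t^*) \geq \pi(st)\pi(t^*)$ by (ii). Combining, $\pi(st)\pi(t^*)\leq\pi(st\,t^*)\leq\pi(s)$. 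Then $\pi(st)\pi(t^*) = \pi(st)\pi(t^*)\cdot\pi(t)\pi(t^*)$ (as $\pi(t^*)=\pi(t)^*$ and $\pi(t)^*\pi(t)\pi(t)^*=\pi(t)^*$), and since $\pi(st)\pi(t^*)\leq\pi(s)$ we may replace the leading factor to get $\pi(st)\pi(t^*)\leq\pi(s)\pi(t)\pi(t^*)$. Both inequalities together give equality, proving axiom (i). Axiom (ii) follows by applying axiom (i) to $s^*$ and $s$ in place of $s$ and $t$, then using (i): $\pi(s^*)\pi(s)\pi(s^*)=\pi(s^*s)\pi(s^*)$ is not yet the target; rather one applies the symmetric argument. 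Actually the cleanest route is: by (i)-for-$\pi$ applied with roles of $s,t$ suitably chosen, or by taking adjoints of axiom (i) for appropriate elements—using $\pi(s^*)=\pi(s)^*$ throughout, axiom (ii) for the pair $(s,t)$ is the adjoint of axiom (i) for the pair $(t^*,s^*)$.

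\medskip

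\textbf{Main obstacle.} The only delicate point is the reverse direction for axiom (i): proving $\pi(st)\pi(t^*)\leq\pi(s)\pi(t)\pi(t^*)$ requires combining the inequality $\pi(st)\pi(t^*)\leq\pi(s)$ (which comes from $st\,t^*\leq s$ plus (ii) and (iii)) with an idempotent-absorption identity, and then verifying that multiplying both sides of $a\leq b$ on the right by a common element, and replacing a left factor below another, respects the order. These are all standard facts about the natural partial order on an inverse semigroup (compatibility with multiplication), so once they are invoked the computation is short; I expect this to be the step where care with the order relation is needed, but no genuine difficulty.
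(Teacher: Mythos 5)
Your proposal is correct and follows essentially the same route as the paper: the forward direction exhibits $\pi(s)\pi(t)$ as $\pi(st)$ times the idempotent $\pi(t)^*\pi(t)$ and uses Corollary~\ref{cor:IdempotentsInPrep} for monotonicity, while the reverse direction sandwiches $\pi(s)\pi(t)\pi(t^*)\leq\pi(st)\pi(t^*)\leq\pi(s)\pi(t)\pi(t^*)$ using $stt^*\leq s$, the absorption identity $a a^* a=a$, and compatibility of the natural order with multiplication. The only cosmetic difference is that you establish $\pi(st)\pi(t^*)\leq\pi(s)$ before inserting the idempotent $\pi(t)\pi(t^*)$, whereas the paper inserts it first; and your observation that axiom (ii) is the adjoint of axiom (i) applied to $(t^*,s^*)$ is exactly the paper's ``follows similarly.''
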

\begin{proof}
  Assume that $\pi$ is a partial homomorphism. Then (i) holds by
\eqref{PreserveStar}.  Next,  pick $s,t\in\G$, and   let $f =
\pi(t^*)\pi(t)$.  We then have that $f$ is idempotent by (i), and
  $$
  \pi(st) f =
  \pi(st) \pi(t^*)\pi(t) =
  \pi(s)\pi(t) \pi(t^*)\pi(t) =
  \pi(s)\pi(t),
  $$
  proving (ii). Assuming that $s\leq t$, write $s = te$, for some idempotent
$e$.  Then $\pi(e)$ is idempotent by Corollary~\ref{cor:IdempotentsInPrep}(i)
and
  $$
  \pi(t)\pi(e) \eq{\ref{cor:IdempotentsInPrep}(ii)}
  \pi(te)  = \pi(s),
  $$
  proving (iii).

Conversely,  suppose that (i) -- (iii) hold.  Given $s,t\in\G$,  we have
  \begin{multline*}
  \pi(s)\pi(t)\pi(t^*) \leq \pi(st)\pi(t^*) = \pi(st)\pi(t^*) \pi(t) \pi(t^*)\\
  \leq \pi(stt^*) \pi(t) \pi(t^*) \leq \pi(s) \pi(t) \pi(t^*),
  \end{multline*}
  where in the last step we have used (iii) and the fact that $stt^*\leq s$.
This gives Definition~\ref{DefPRep}(i),
  and \ref{DefPRep}(ii) follows similarly.
  The last axiom of Definition\ref{DefPRep} follows immediately from (i).
\end{proof}

\begin{remark}
Maps $\pi\colon\G\to \T$ satisfying conditions (i)--(iii) of Proposition~\ref{prop:CharacterizationOfPartialRepresentations} are sometimes called \emph{dual pre-homomorphisms} by
some authors (as for instance, in~\cite{LawsonMargolisSteinberg:Expansions}).
\end{remark}

Let us specialize to partial homomorphisms in symmetric inverse
semigroups, but first we would like to introduce  some terminology.

\begin{definition} Let $\G$ be an inverse semigroup and let $X$ be a set.  By a
\emph{partial action} of $\G$ on $X$ we shall mean a partial homomorphism
  $$
  \pi : \G \to \I(X),
  $$
  where $\I(X)$ denotes the symmetric inverse semigroup on $X$.
\end{definition}

Given a partial action $\pi$, as above, we shall write $\pi_s$ for
$\pi(s)$, since an expression such as "$\pi_s(x)$" looks a lot nicer than
"$\big(\pi(s)\big)(x)$".

\begin{proposition}\label{RepInSimmetric} Let $X$ be a set and let $\pi: \G\to \I(X)$ be a map.  For
each $s\in \G$, let $X_s$ be the range of $\pi_s$.  Then $\pi$ is a partial
action if and only if, for all $s,t\in\G$ one has that
  \begin{enumerate}[(i)]
  \item $\pi_s\inv =  \pi_{s^*}$ (in particular this implies that the domain of $\pi_s$ coincides
with the range of $\pi_{s^*}$, namely $X_{s^*}$),
  \item $\pi_s(X_{s^*}\cap X_t) = X_s\cap X_{st}$,
  \item for every $x\in X_{t^*}\cap X_{t^*s^*}$, one has that $\pi_s\big(\pi_t(x)\big) = \pi_{st}(x)$.
  \end{enumerate}
\end{proposition}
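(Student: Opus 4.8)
The plan is to show the equivalence by unwinding Definition~\ref{DefPRep} in the special case $\T=\I(X)$, translating each algebraic identity into the corresponding set-theoretic statement about domains, ranges, and compositions of partial bijections. First I would dispose of the easy direction. Suppose $\pi$ is a partial action. Since $\I(X)$ is an inverse semigroup, \eqref{PreserveStar} gives $\pi(s^*)=\pi(s)^*=\pi_s\inv$, which is (i); the parenthetical remark is just the elementary fact that for a partial bijection $\phi$, $\dom(\phi)=\ran(\phi\inv)$. For (ii) and (iii), I would apply Corollary~\ref{cor:IdempotentsInPrep} to the idempotent $s^*s\in E(\G)$: in $\I(X)$ the idempotent $\pi(s^*s)=\pi_s\inv\pi_s$ is the identity on $X_{s^*}=\dom(\pi_s)$, and the relation $\pi(s^*s)\pi(t)=\pi(s^*st)$ together with Definition~\ref{DefPRep}(ii) for the pair $(s,t)$ should yield both the equality of ranges in (ii) and the composition formula in (iii), the latter after precomposing with the appropriate idempotent to land in the correct domain.

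Conversely, assume (i)--(iii) hold. The strategy is to verify the three axioms of Definition~\ref{DefPRep} directly. Axiom (iii), $\pi(s)\pi(s^*)\pi(s)=\pi(s)$, is immediate from (i): $\pi_s\pi_{s^*}\pi_s=\pi_s\pi_s\inv\pi_s=\pi_s$. For axiom (i), $\pi(s)\pi(t)\pi(t^*)=\pi(st)\pi(t^*)$, I would compute both sides as partial bijections. Using (i), the right-hand side is $\pi(st)\pi_t\inv$, which has domain contained in $\ran(\pi_t)=X_t$; on a point $x$ where it is defined, we need $\pi_{st}(\pi_t\inv(x))=\pi_s(\pi_t(\pi_t\inv(x)))=\pi_s(x)$, which is exactly a case of the composition/range conditions. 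The point is that (ii) controls where compositions are defined and (iii) — or rather the hypothesis as stated in Proposition~\ref{RepInSimmetric}(iii) about points in $X_{t^*}\cap X_{t^*s^*}$ — gives the pointwise agreement. Axiom (ii), $\pi(s^*)\pi(s)\pi(t)=\pi(s^*)\pi(st)$, follows by an entirely symmetric computation, or by applying the already-established axiom (i) to $s^*,t^*$ and taking inverses via (i).

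The main obstacle I anticipate is the careful bookkeeping of domains and ranges: one must check not merely that two partial maps agree where both are defined, but that they have the \emph{same} domain, and this is where condition (ii), $\pi_s(X_{s^*}\cap X_t)=X_s\cap X_{st}$, does the real work. Concretely, in verifying Definition~\ref{DefPRep}(i) one must show $\dom(\pi_s\pi_t\pi_{t^*})=\dom(\pi_{st}\pi_{t^*})$; the left side is $\{x\in X_t: \pi_t\inv(x)\in X_{t^*}\cap X_{t^*s^*}\}$ after a short manipulation, the right side is $\{x: x\in\dom(\pi_{st}\inv)\}\cap X_t$ reinterpreted, and matching these up requires exactly the range identity in (ii) applied with the roles of the elements suitably chosen (e.g.\ replacing $s$ by $st$ and $t$ by $t^*$, using $X_{(st)^*}=X_{t^*s^*}$). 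I would isolate this as a small lemma-style computation: for any $s,t$, the partial bijection $\pi(s)\pi(t)$ has domain $\pi_t\inv(X_{t^*}\cap X_{s^*})$ — wait, one should double-check the direction — has domain equal to $\pi_{t^*}(X_t\cap X_{s^*})$, intersected appropriately, and once that normal form for the domain of a two-fold product is in hand, both sides of each axiom reduce to it mechanically. Everything else is routine substitution, so I would present the domain computation in full and merely indicate the symmetric cases.
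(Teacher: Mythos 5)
Your plan is sound, but it takes a genuinely different route from the paper. The paper funnels both implications through Proposition~\ref{prop:CharacterizationOfPartialRepresentations}: it first shows that conditions (i)--(iii) yield the dual-prehomomorphism axioms ($\pi(s)\pi(t)\le\pi(st)$ and monotonicity), and in the converse it derives the equality in (ii) by a two-step inclusion argument, where the reverse inclusion hinges on $stt^*\le s$ and order preservation. You instead propose to read the three identities of Definition~\ref{DefPRep} directly as equalities of partial bijections in $\I(X)$ and extract domains and ranges. This is legitimate and in one place actually \emph{shorter} than the paper: since $\pi_{s^*}\pi_s=\id_{X_{s^*}}$, the identity $\pi(s^*)\pi(s)\pi(t)=\pi(s^*)\pi(st)$ forces equality of ranges, namely $X_{s^*}\cap X_t=\pi_{s^*}(X_s\cap X_{st})$, and applying $\pi_s$ gives (ii) in one stroke, with no need for the paper's change-of-variables inclusion chase; similarly $\pi_s\pi_t=(\pi_s\pi_t\pi_{t^*})\pi_t=(\pi_{st}\pi_{t^*})\pi_t=\pi_{st}\rest{X_{t^*}\cap X_{t^*s^*}}$ gives (iii) together with the identification of $\dom(\pi_s\pi_t)$. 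What the paper's detour buys is reusability: the intermediate characterization and the facts $\pi_e=\id_{X_e}$, $s\le t\Rightarrow\pi_s\le\pi_t$ are stated once and cited repeatedly later (e.g.\ in Proposition~\ref{EasyParRep} and Theorem~\ref{theo:DefineMainRep}), whereas your argument re-derives them inline. Two cautions if you write this up: you dismiss the direction ``partial action $\Rightarrow$ (i)--(iii)'' as easy, but the equality in (ii) is precisely where the paper spends most of its effort, so do carry out the range computation explicitly rather than asserting it ``should yield'' the claim; and in your domain bookkeeping for Definition~\ref{DefPRep}(i) you have the two sides interchanged --- $\dom(\pi_s\pi_t\pi_{t^*})$ is simply $X_t\cap X_{s^*}$, while it is $\dom(\pi_{st}\pi_{t^*})=\pi_t(X_{t^*}\cap X_{t^*s^*})$ that needs condition (ii) (plus $\pi_{tt^*}=\id_{X_{tt^*}}$, hence $X_t\cap X_{tt^*s^*}=X_t\cap X_{s^*}$) to be matched with it.
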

\begin{proof}
  Initially observe that, under (i) -- (ii),  the composition ``$\pi_s\big(\pi_t(x)\big)$" appearing
in (iii) is meaningful, since
  $$
  \pi_t(x) \in \pi_t(X_{t^*}\cap X_{t^*s^*}) =
  \pi_t\big(\pi_{t^*}(X_t\cap X_{s^*})\big) =
  X_t\cap X_{s^*} \subseteq X_{s^*}.
  $$

Assume first that (i) -- (iii) hold.  In order to prove that $\pi$ is a partial
action we will use Proposition~\ref{prop:CharacterizationOfPartialRepresentations}.  Since Proposition~\ref{prop:CharacterizationOfPartialRepresentations}(i)
is granted, let us attack Proposition~\ref{prop:CharacterizationOfPartialRepresentations}(ii).  For this, let $s,t\in \G$, and
notice that the domain of $\pi_s\pi_t$ coincides with
  $$
  \pi_{t^*}(X_t\cap X_{s^*})  = X_{t^*}\cap X_{t^*s^*},
  $$
  by (ii).  Evidently this is contained in $X_{t^*s^*}$, also known
as the domain of $\pi_{st}$.  By (iii) we see that $\pi_s \pi_t$ coincides
with $\pi_{st}$ on the domain of the former, which means that
$\pi_s\pi_t\leq\pi_{st}$, proving Proposition~\ref{prop:CharacterizationOfPartialRepresentations}(ii).

Let us now study the behavior of $\pi_e$,  for an idempotent $e\in E(\G)$.
By (iii) with $s=t=e$, we deduce that for every $x\in X_e$, one has that
$\pi_e(\pi_e(x)) = \pi_e(x)$, but since $\pi_e(x)$ is injective it must be that
$\pi_e(x)=x$.  In other words, $\pi_e$ is the identity map on $X_e$.

Now let $s,t\in\G$ with $s\leq t$, so that $s = ts^*s$.
As seen above $\pi_{s^*s}$ is the identity
on $X_{s^*s}$, so
  $$
  X_{s^*s}\cap X_{t^*}=
  \pi_{s^*s}(X_{s^*s}\cap X_{t^*}) \eq{(ii)}
  X_{s^*s}\cap X_{s^*st^*} =
  X_{s^*s}\cap X_{s^*} \eq{(ii)} $$
  $$ =
  \pi_{s^*}(X_{s}\cap X_{s}) =
  \pi_{s^*}(X_{s}) =
  X_{s^*}.
  $$
  This implies that $X_{s^*}\subseteq X_{t^*}$ and,  for every $x\in X_{s^*}$,  we have
  $x\in X_{s^*s}\cap X_{s^*st^*}$, hence by (iii)
  $$
  \pi_s(x) = \pi_{ts^*s}(x) = \pi_t(\pi_{s^*s}(x)) = \pi_t(x),
  $$
  proving that $\pi_s\leq\pi_t$.

Now assume that $\pi$ is a partial action, so Proposition~\ref{prop:CharacterizationOfPartialRepresentations}(i--iii)
hold.  Then (i) is granted so let us prove (ii).  Given $s, t\in\G$,
observe that the domain of $\pi_s\pi_t$ is given by $\pi_{t^*}(X_t\cap
X_{s^*})$, which is therefore contained in the domain of $\pi_{st}$, namely
$X_{t^*s^*}$, by Proposition~\ref{prop:CharacterizationOfPartialRepresentations}(ii), that is,
  $$
  \pi_{t^*}(X_t\cap X_{s^*}) \subseteq X_{t^*s^*}.
  $$
  Since the set in the left-hand side  above is obviously also contained in
$X_{t^*}$, we deduce that
  \begin{equation}\label{OneInclusion}
  \pi_{t^*}(X_t\cap X_{s^*}) \subseteq X_{t^*}\cap X_{t^*s^*}.
  \end{equation}
  We claim that \eqref{OneInclusion} is actually an equality of sets. In order to
  prove it notice that a suitable change of variables in \eqref{OneInclusion} yields
  \begin{equation}\label{OtherInclusion}
  \pi_t(X_{t^*}\cap X_{t^*s^*}) \subseteq X_t\cap X_{tt^*s^*}.
  \end{equation}
  Evidently $stt^* \leq s$, so the domain of $\pi_{stt^*}$ is contained in the
domain of $\pi_s$ by Proposition~\ref{prop:CharacterizationOfPartialRepresentations}(iii)
or, in other words, $X_{tt^*s^*} \subseteq X_{s^*}$.  Plugging this in
\eqref{OtherInclusion} one obtains
  \begin{equation}\label{InocentInclusion}
  \pi_t(X_{t^*}\cap X_{t^*s^*}) \subseteq X_t\cap X_{s^*}.
  \end{equation}
  Applying the inverse of $\pi_t$ to both sides then gives
  $$
  X_{t^*}\cap X_{t^*s^*} \subseteq \pi_{t^*}( X_t\cap X_{s^*}),
  $$
  which happens to be precisely the converse of the inclusion in
  \eqref{OneInclusion}.  Therefore,
  $$
  \pi_{t^*}(X_t\cap X_{s^*}) = X_{t^*}\cap X_{t^*s^*},
  $$
  and our claim is proved.
Suitably changing variables in the above equality immediately yields (ii).

In order to prove (iii) let $x\in X_{t^*}\cap X_{t^*s^*}$.  Then by
\eqref{InocentInclusion} one has that $\pi_t(x)\in X_{s^*}$, so $x$ is in the
domain of $\pi_s\pi_t$, and hence by Proposition~\ref{prop:CharacterizationOfPartialRepresentations}(ii)  one has that
  $$
  \pi_s\big(\pi_t(x)\big) = \pi_{st}(x).
  $$
\vskip -1pc
\end{proof}

Let us now prove a result very similar to the above,  except that it is tailored
to require minimal effort for checking a map to be a partial action.

\begin{proposition}\label{EasyParRep} Let $X$ be a set and let $\pi: \G\to \I(X)$ be a map.  For
each $s\in \G$, let $X_s$ be the range of $\pi_s$.  Then $\pi$ is a partial
action if and only if, for all $s,t\in \G$ one has that
  \begin{enumerate}[(i)]
  \item $\pi_s\inv =  \pi_{s^*}$.
  \item $\pi_s(X_{s^*}\cap X_t) \subseteq X_{st}$,
  \item if $s\leq t$, then $X_s\subseteq X_t$,
  \item for every $x\in X_{t^*}\cap X_{t^*s^*}$, one has that $\pi_s\big(\pi_t(x)\big) = \pi_{st}(x)$.
  \end{enumerate}
\end{proposition}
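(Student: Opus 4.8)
The plan is to deduce this characterization from Proposition~\ref{RepInSimmetric}. Since conditions (i) and (iv) here are identical to (i) and (iii) there, and (ii)--(iii) here are visibly weakenings of (ii) there (inclusions rather than equalities), the forward implication --- if $\pi$ is a partial action then (i)--(iv) hold --- is immediate from Proposition~\ref{RepInSimmetric}. All the work is in the converse: assuming the ostensibly weaker conditions (i)--(iv), I must recover the equality in Proposition~\ref{RepInSimmetric}(ii), i.e.\ that $\pi_s(X_{s^*}\cap X_t)\supseteq X_s\cap X_{st}$, after which Proposition~\ref{RepInSimmetric} finishes the job.

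First I would establish, exactly as in the proof of Proposition~\ref{RepInSimmetric}, that $\pi_e$ is the identity on $X_e$ for every idempotent $e$: apply (iv) with $s=t=e$ to get $\pi_e(\pi_e(x))=\pi_e(x)$ for $x\in X_e$, and cancel using injectivity of $\pi_e$. Next, I would argue that (iii) actually implies its own converse under (i)--(ii): if $s\le t$ then $X_s\subseteq X_t$ by (iii), but also $s^*\le t^*$, and one computes $X_{s^*}\subseteq X_{t^*}$ and then uses (ii) together with the identity-on-$X_e$ fact to squeeze $X_t\cap X_{s^*}$ back down, recovering enough to mimic the change-of-variables/double-inclusion argument from Proposition~\ref{RepInSimmetric}. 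The cleanest route is probably to follow that earlier proof line by line: from (ii) one gets $\pi_t(X_{t^*}\cap X_{t^*s^*})\subseteq X_t\cap X_{tt^*s^*}$ by substitution, then use $stt^*\le s$ together with (iii) to conclude $X_{tt^*s^*}\subseteq X_{s^*}$, hence $\pi_t(X_{t^*}\cap X_{t^*s^*})\subseteq X_t\cap X_{s^*}$; applying $\pi_{t^*}=\pi_t^{-1}$ gives one inclusion, and the reverse inclusion $\pi_{t^*}(X_t\cap X_{s^*})\subseteq X_{t^*}\cap X_{t^*s^*}$ follows from (ii) and (iii) as before. This yields the equality in Proposition~\ref{RepInSimmetric}(ii).

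The main obstacle is making sure that condition (ii) as stated here --- the bare inclusion $\pi_s(X_{s^*}\cap X_t)\subseteq X_{st}$, with no companion statement about $X_s\cap X_{st}$ --- genuinely suffices to run the double-inclusion argument. The worry is whether one silently uses the full Proposition~\ref{prop:CharacterizationOfPartialRepresentations}(ii) (i.e.\ $\pi_s\pi_t\le\pi_{st}$) somewhere; but in fact (ii) here says precisely that the domain of $\pi_s\pi_t$ lands inside the domain of $\pi_{st}$, which combined with (iv) gives $\pi_s\pi_t\le\pi_{st}$ directly, so Proposition~\ref{prop:CharacterizationOfPartialRepresentations}(ii) is available. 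Once that is in hand, together with (i) and (iii) one has all three conditions of Proposition~\ref{prop:CharacterizationOfPartialRepresentations}, so $\pi$ is a partial homomorphism, hence a partial action. The remaining verification --- that (iii) here (only $s\le t\Rightarrow X_s\subseteq X_t$) is equivalent, in the presence of (i), (ii), (iv), to the full order-preservation $\pi_s\le\pi_t$ --- is handled by the same computation showing $X_{s^*}\subseteq X_{t^*}$ and then invoking (iv) pointwise, exactly as at the end of the proof of Proposition~\ref{RepInSimmetric}.
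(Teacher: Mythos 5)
Your proposal is correct and follows essentially the same route as the paper: the forward direction is read off from Proposition~\ref{RepInSimmetric} (and Proposition~\ref{prop:CharacterizationOfPartialRepresentations}(iii) for item (iii)), and the converse reduces to recovering the equality in Proposition~\ref{RepInSimmetric}(ii) by the same change-of-variables double-inclusion argument, using (iii) on $s^*st\leq t$ (equivalently $stt^*\leq s$) and $\pi_{s^*}=\pi_s\inv$. The detour through Proposition~\ref{prop:CharacterizationOfPartialRepresentations} and the identity-on-$X_e$ observation is unnecessary for this route, but the core argument matches the paper's.
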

\begin{proof}
  The reader should have noticed that the conditions above are very
similar to the conditions given in Proposition~\ref{RepInSimmetric}, except that the
equality in Proposition~\ref{RepInSimmetric}(ii) is replaced by the weaker inclusion in (ii)
above, while (iii) is new.

  The remark made at the beginning of the proof of
Proposition~\ref{RepInSimmetric} applies here as well,  although with a different
argument: under (i) -- (iii), let the $x\in X_{t^*}\cap X_{t^*s^*}$.  Then
  $$
  \pi_t(x) \in \pi_t(X_{t^*}\cap X_{t^*s^*}) {\buildrel {\rm (ii)} \over \subseteq}
  X_{tt^*s^*} {\buildrel {\rm (iii)} \over \subseteq}
  X_{s^*},
  $$
  hence the composition ``$\pi_s\big(\pi_t(x)\big)$" appearing in (iv) is
meaningful.

Let us first observe that conditions (i) -- (iv) above are necessary for $\pi$
to be a partial action: clearly (i), (ii), and (iv) immediately follow from
Proposition~\ref{RepInSimmetric}, while (iii) is a direct consequence of
Proposition~\ref{prop:CharacterizationOfPartialRepresentations}(iii).

Conversely, let us prove that $\pi$ is a partial action under the
assumption that the above conditions hold.  Using Proposition~\ref{RepInSimmetric},  it
is clearly enough to verify Proposition~\ref{RepInSimmetric}(ii).  Since the range of
$\pi_s$ is precisely $X_s$ one immediately deduces from (ii) that
  \begin{equation}\label{OneSodeOfTheInclusion}
  \pi_s(X_{s^*}\cap X_t) \subseteq X_s \cap X_{st}.
  \end{equation}
  Applying this with a suitable change of variables, we get
  $$
  \pi_{s^*}(X_s \cap X_{st}) \subseteq
  X_{s^*} \cap X_{s^*st} {\buildrel {\rm (iii)} \over \subseteq}
  X_{s^*} \cap X_t.
  $$
  Because $\pi_s$ is the inverse of $\pi_{s^*}$ the above implies that
  $$
  X_s \cap X_{st} \subseteq  \pi_s(X_{s^*} \cap X_t),
  $$
  which combines with~\eqref{OneSodeOfTheInclusion} to give the desired Proposition~\ref{RepInSimmetric}(ii).
\end{proof}

\section{The expansion as an adjunction of categories}

Let $\Inv$ be the category of inverse semigroups with usual homomorphisms as morphisms, and let $\Pinv$ be the category of inverse semigroups with partial homomorphisms as morphisms. Note that composition of partial
homomorphisms in again a partial homomorphism (this can be seen from Proposition~\ref{prop:CharacterizationOfPartialRepresentations}) so that $\Pinv$ is indeed a category. Given two inverse semigroups $\G$ and $\H$, we write
$\Hom(\G,\H)$ for the set of all homomorphisms $\G\to\H$ (that is, the hom-set in $\Inv$) and $\PHom(\G,\H)$ for the set of all partial homomorphisms $\G\to\H$ (that is, the hom-set in $\Pinv$). Observe that $\G\mapsto \SG$
is a functor $\Pinv\to\Inv$: given a partial homomorphism $\pi\colon G\to H$ the induced homomorphism $\S\pi\colon \SG\to \S\H$ is given by $S(\pi)=\iota_\H\circ \Sh\pi$, where $\iota_\H\colon \H\to \S\H$ is the canonical
inclusion $h\mapsto [h]$ and $\Sh\pi\colon \SG\to H$ is the morphism provided by Proposition~\ref{prop:UniProp}. In other words, $\S\pi[g]=[\pi(g)]$ for all $g\in \G$.
By Proposition~\ref{prop:UniProp}, there is a bijective correspondence between partial homomorphisms $\G\to \H$ and homomorphisms $\SG\to\H$, that is, we have a canonical bijection $\Hom(\G,\H)\cong\PHom(\SG,\H)$. It is easy
to see that this correspondence is natural in both variables $\G$ and $\H$. This already says that we have an adjunction between the prefix expansion functor $\Pr\colon\Pinv\to\Inv$, $\G\to\SG$ and the forgetful functor
$\Fg\colon\Inv\to\Pinv$.

Now we describe the above adjunction in terms of units and counits. Given an inverse semigroup $\G$, we write $\partial_G\colon \SG\to\G$ for the degree map and $\iota_\G\colon\G\to\SG$ for the canonical inclusion map, that
is, $\partial_G([g])=g$ and $\iota_G(g)=[g]$ for all $g\in \G$. Then $\partial$ may be interpreted as a natural transformation from the functor $\Pr\circ\Fg\colon \Inv\to \Inv$ to the identity functor $\Id_{\Inv}$ on $\Inv$,
and $\iota$ may be viewed as a natural transformation from the identity functor $\Id_{\Pinv}$ to the functor $\Fg\circ\Pr\colon\Pinv\to\Pinv$.
Moreover, $(\iota,\partial)$ is the pair unit-counit for the adjunction between $\Pr$ and $\Fg$. To see this, one has to check the equations
$$1_{\Pr}=\partial\Pr\circ\Pr\iota\quad\mbox{and}\quad 1_{\Fg}=\Fg\partial\circ\iota\Fg,$$
write $1$ denotes identity natural transformations. More explicitly, this means that the composition
$$\partial_{\SG}\circ\Pr(\iota_G)\colon\SG\to\S\SG\to\SG$$
is the identity homomorphism $\Id_{\SG}\colon\SG\to\SG$; and the composition
$$\partial_G\circ\iota_G\colon \G\to\SG\to\G$$
is the identity (partial) homomorphism $\Id_{\G}\colon \G\to\G$. Both assertions are easily checked.

\section{The canonical partial action}\label{sec:CanonicalPartialAction}

We again fix an inverse semigroup $\G$.  The goal of this section is to exhibit
a somewhat canonical partial action of $\G$ which will, among other
things, enable us to prove that the normal form of each element in $\SG$ is
unique.

\begin{definition} A nonempty subset $\xi\subseteq \G$ will be called a
\emph{filter} if for every $e\in E(\G)$ and $s\in\G$,
  \begin{equation}\label{ParFilCondition}
  es\in\xi \iff e\in\xi \quad\mbox{and}\quad\ s\in\xi.
  \end{equation}
\end{definition}

If $G$ is a group, a filter is just a subset $\xi\sbe \G$ containing $1$ (the unit of $\G$).
In general, filters may contain several idempotents (for instance $\G$ is always a filter).
If $\G$ has a zero $0\in \G$, then the only filter containing $0$ is $\G$. Also, observe that $E(S)$ is a filter if and only if $S$ is $E$-unitary.

\begin{remark}
In general, a filter does not satisfy the symmetric property of~\eqref{ParFilCondition}:
$$se\in \xi\iff e\in \xi\quad\mbox{and}\quad\ s\in\xi.$$
In fact, it may happen that $s\in \xi$ but $s^*s\notin\xi$ and also $s^*\notin\xi$.
As a simple example, let $\G$ be the inverse semigroup with $0$ and one generator $s\not=0$ satisfying $s^2=0$. This is the inverse semigroup with five elements $G=\{0,e,f,s,t\}$, where $t=s^*$, $ss^*=e$ and $s^*s=f$ (with
$s^2=t^2=0$). Observe that $\xi=\{e,s\}$ is a filter in $\G$. We have $s\in \xi$ but $s^*=t\notin \xi$ and $s^*s=f\notin\xi$. The same problem happens for the filter $\eta=\{f,t\}$.

Let us use the opportunity to mention that $\SG=\{\epsilon_0,\epsilon_e,\epsilon_s,\epsilon_f,\epsilon_t,[s],[t]\}$, an inverse semigroup containing $\{\epsilon_0=[0],\epsilon_s,\epsilon_t,[s],[t]\}$ as an inverse
subsemigroup isomorphic to $\G$ and two other new idempotents $\epsilon_e\geq \epsilon_s$ and $\epsilon_f\geq \epsilon_t$.
\end{remark}

Here is a useful alternative characterization of filters.

\begin{proposition} \label{ParfilConds} Let  $\xi$ be a nonempty subset of $\G$.
Then $\xi$ is a filter if and only if
  \begin{enumerate}[(i)]
  \item $ss^*\in\xi$ for every $s\in\xi$;
  \item $ss^*t\in\xi$ for every $s,t\in\xi$; and
  \item whenever $s, t\in \G$ are such that $s\in \xi$ and $s\leq t$, we have $t\in\xi$.
  \end{enumerate}
\end{proposition}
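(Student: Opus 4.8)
The plan is to prove the two implications separately, and in each direction to exploit the fact that idempotents in $\G$ are precisely the elements of the form $ss^*$ (and, more to the point, that every idempotent $e$ equals $ee^*$).

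First I would show that a filter satisfies (i)--(iii). Condition (i) is the special case of \eqref{ParFilCondition} obtained by taking $e = ss^*$ and the given $s$: since $s = (ss^*)s \in \xi$, the filter condition forces $ss^* \in \xi$. For (ii), given $s,t\in\xi$ we have $ss^*\in\xi$ by (i), and then $ss^*t$ is a product of the idempotent $ss^*\in\xi$ with $t\in\xi$; applying the ``$\Leftarrow$'' direction of \eqref{ParFilCondition} gives $ss^*t\in\xi$. For (iii), suppose $s\in\xi$ and $s\le t$, so $s = te$ for some $e\in E(\G)$; in fact one can take $e = s^*s = t^*te$, hence $e \le s^*s$, and more simply write $s = te$. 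From $s = te \in \xi$ and the ``$\Rightarrow$'' direction of \eqref{ParFilCondition} (with the idempotent on the \emph{right}, after rewriting $te = (tet^*)t$ using that $tet^* = ss^*$ is idempotent and $te = (tet^*)t$), we get $t\in\xi$. I would be a little careful here: the cleanest route is to note $s \le t$ means $s = ss^* t$, so $s = (ss^*)t$ with $ss^*\in E(\G)$; then \eqref{ParFilCondition} directly yields $t\in\xi$ (and incidentally $ss^*\in\xi$ again).

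For the converse, assume (i)--(iii) and verify \eqref{ParFilCondition}. For the forward direction, suppose $es\in\xi$ with $e\in E(\G)$. Then $es \le s$ (since $es = (es)(es)^* s$... or more directly $es \le s$ because $es = e\cdot s$ and $e$ is idempotent, giving $(es)s^*s = es$ and $es \le s$), so $s\in\xi$ by (iii). Also $es \le e$? That need not hold; instead use (i): $(es)(es)^* = ess^*e \le e$ is an idempotent in $\xi$, hence... hmm, I would instead argue $es\le s$ gives $s\in\xi$, and then $(es)(es)^* = es s^* e\in\xi$ by (i); but I actually want $e\in\xi$. Note $es\le e$ \emph{does} hold when we also know $s$: we have $es = e(es)$, and $(es)^*(es) = s^*es e$... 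Let me instead observe $ess^*e\le e$ and apply (iii) to conclude $e\in\xi$ from $ess^*e\in\xi$ — this works provided $ess^*e\in\xi$, which is (i) applied to $es$. So: $es\in\xi \Rightarrow ess^*e\in\xi$ by (i), and $ess^*e \le e$, so $e\in\xi$ by (iii); separately $es\le s$ so $s\in\xi$ by (iii). For the reverse direction, suppose $e\in\xi$ and $s\in\xi$ with $e\in E(\G)$; then $es = ee^*s \in\xi$ by (ii) applied with the pair $(e,s)$ (since $ee^* = e$). That gives \eqref{ParFilCondition} completely.

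The main obstacle, and the only place requiring genuine care rather than bookkeeping, is the forward implication of \eqref{ParFilCondition} in the converse: extracting \emph{both} $e\in\xi$ and $s\in\xi$ from the single hypothesis $es\in\xi$ using only the order-upward-closure (iii) and the ``$ss^*$'' and ``$ss^*t$'' closure properties (i)--(ii). The trick is that $es \le s$ is immediate, while $e$ is recovered not by comparing $es$ to $e$ directly (which can fail) but by first passing to the idempotent $ess^*e = (es)(es)^*\le e$ via (i) and then using (iii). Everything else is a routine unwinding of the definitions of $\le$ and $E(\G)$.
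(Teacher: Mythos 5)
Your argument is correct and follows essentially the same route as the paper's proof: the forward direction uses the choice $e=ss^*$ in \eqref{ParFilCondition} for all three conditions, and the converse recovers $s\in\xi$ from $es\le s$ and $e\in\xi$ from $(es)(es)^*=ess^*e\le e$ via (i) and (iii), exactly as the paper does. The only difference is some exploratory hedging in your write-up, but the final argument is the paper's.
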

\begin{proof}
  Assuming that $\xi$ is a filter, let $s\in\xi$.  With $e=ss^*$, we have
 $es = s \in\xi$, hence $e\in\xi$,   proving (i).  If one is
also given $t\in\xi$, then  $ss^*t =
et\in\xi$,  proving (ii).
Next, assume that $t\geq s\in\xi$.  Again with $e = ss^*$, we have
  $
  et = ss^*t = s \in\xi,
  $
  so~\eqref{ParFilCondition} implies that $t\in\xi$.

Conversely, suppose that $\xi$ satisfies (i)--(iii) and assume that
$es\in\xi$, where $e$ is idempotent. Then
  $$
  \xi \ni es \leq s
  $$
  and hence $s\in\xi$, by (iii).  Moreover,  by (i),
  $$
  \xi \ni \ es(es)^* = ess^* \leq e,
  $$
  and hence $e\in\xi$, again by (iii).
  On the other hand, assuming that $e,s\in\xi$, with $e$ idempotent, we have by
(ii) that
  $$
  es =   ee^*s \in \xi.
  $$
\vskip -0,5cm
\end{proof}

\begin{definition} \label{DefineParFilBase} A nonempty subset $\xi\subseteq\G$ is said to be
a \emph{filter base} if it satisfies Proposition~\ref{ParfilConds}(i--ii).
\end{definition}

Observe that Proposition~\ref{ParfilConds}(i) follows from Proposition~\ref{ParfilConds}(ii) by taking $s=t$. So a filter base is just a subset $\emptyset\not= \xi\sbe \G$ satisfying Proposition~\ref{ParfilConds}(ii).

As a relevant example, notice that every $e$-set (Definition~\ref{DefineESet}) is a filter base.

\begin{proposition} \label{FromBaseToFilter} Let $\eta$ be a filter base.  Then
  $$
  \gen\eta := \{t\in\G: t\geq s, \hbox{ for some } s\in \eta\}
  $$
  is a filter.
\end{proposition}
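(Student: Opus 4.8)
The plan is to verify the three conditions of Proposition~\ref{ParfilConds} for the set $\gen\eta$. First I would check condition (iii), which is immediate by construction: if $t\geq s\in \gen\eta$, then $s\geq u$ for some $u\in\eta$, and since $\leq$ is transitive, $t\geq u$, so $t\in\gen\eta$; also $\gen\eta$ is nonempty because $\eta\sbe\gen\eta$ and $\eta\neq\emptyset$.

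Next I would verify condition (i). Given $t\in\gen\eta$, pick $u\in\eta$ with $t\geq u$. Then $tt^*\geq uu^*$ (multiplying the inequality $t\geq u$ by its adjoint, or using that $t\geq u$ means $t=ue$ for an idempotent $e$, whence $tt^*=ue e^*u^*=ueu^*\leq uu^*$). Since $\eta$ is a filter base, it satisfies Proposition~\ref{ParfilConds}(i), so $uu^*\in\eta\sbe\gen\eta$, and then $tt^*\geq uu^*\in\gen\eta$ gives $tt^*\in\gen\eta$ by condition (iii) already established.

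The main work is condition (ii): given $s,t\in\gen\eta$, show $ss^*t\in\gen\eta$. Choose $u,v\in\eta$ with $s\geq u$ and $t\geq v$. The natural candidate for an element of $\eta$ below $ss^*t$ is $uu^*v$, which lies in $\eta$ by Proposition~\ref{ParfilConds}(ii) for the filter base $\eta$. So it suffices to prove $ss^*t\geq uu^*v$. From $s\geq u$ we get $ss^*\geq uu^*$ as above, and from $t\geq v$ we get $ss^*t\geq ss^*v$ (multiplying on the left by the idempotent $ss^*$ preserves $\leq$), and then $ss^*v\geq uu^*v$ (multiplying the idempotent inequality $ss^*\geq uu^*$ on the right by $v$). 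Chaining these, $ss^*t\geq ss^*v\geq uu^*v\in\eta$, so $ss^*t\in\gen\eta$.

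The only mild subtlety is making sure the order-theoretic manipulations---that $a\leq b$ implies $ac\leq bc$, $ca\leq cb$, and $a^*\leq b^*$---are invoked correctly; these are standard facts about the natural partial order on an inverse semigroup and present no real obstacle. Once conditions (i)--(iii) are checked, Proposition~\ref{ParfilConds} gives that $\gen\eta$ is a filter, completing the proof.
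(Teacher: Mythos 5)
Your proof is correct, and it reaches the conclusion by a slightly different route than the paper: you verify the three conditions of Proposition~\ref{ParfilConds} for $\gen\eta$, whereas the paper checks the defining condition~\eqref{ParFilCondition} directly (given $es\in\gen\eta$ with $es\geq t\in\eta$, it deduces $s\geq t$ and $e\geq tt^*$; conversely it exhibits $e_1e_1^*s_1\leq es$). The underlying computations are the same in both versions --- everything rests on the compatibility of the natural partial order with products and involution, and on producing an element of $\eta$ such as $uu^*v$ sitting below the target element --- so neither approach buys much over the other; yours is marginally more modular since Proposition~\ref{ParfilConds} has already been proved, while the paper's is marginally more self-contained. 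One slip worth fixing: in your parenthetical justification of $tt^*\geq uu^*$ you write that $t\geq u$ means $t=ue$ for an idempotent $e$; with the paper's convention (see the proof of Proposition~\ref{prop:CharacterizationOfPartialRepresentations}, where $s\leq t$ is written as $s=te$) the correct statement is $u=te$, and then $uu^*=tet^*\leq tt^*$ as desired. As written, your displayed computation $tt^*=ueu^*\leq uu^*$ proves the inequality in the wrong direction. This does not damage the argument, since your primary justification (multiplying the inequality $u\leq t$ by its adjoint, i.e.\ using that $a\leq b$ and $c\leq d$ imply $ac\leq bd$) is sound and the conclusion you actually use, $tt^*\geq uu^*\in\eta$, is the correct one; but the parenthetical should be corrected or deleted.
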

\begin{proof}
  Let $e\in E(\G)$ and $s\in\G$.  Supposing that $es\in\gen\eta$, there exists $t\in \eta$
such that $es \geq t$.    But then
  $$
  \eta\ni  t \leq es \leq s
  $$
  from where we see that $s\in \gen\eta$.  Moreover
  $$
  \eta \ni tt^* \leq es(es)^*  = ess^* \leq e,
  $$
  so $e\in  \gen\eta$.
  Conversely, suppose that both $e$ and $s$ lie in $\gen\eta$. Then there are
$e_1, s_1\in\eta$ such that $e_1\leq e$, and $s_1\leq s$,  so
  $$
  \eta\ni e_1e_1^*s_1 \leq ee^*s = es,
  $$
  and  therefore $es\in\gen\eta$.
\end{proof}

\begin{definition}
  If $\eta$ is a filter base, we will refer to $\gen\eta$ as the \emph{filter generated by
$\eta$}.  If $\xi$ is a filter and $\gen\eta=\xi$, we will say that $\eta$ is a
\emph{filter base for} $\xi$.
\end{definition}

If $\xi$ is a filter,  then it is evidently also a filter base and $\gen\xi=\xi$.

\begin{definition}
  We will denote by $X$ the collection of all filters,  and for $t\in\G$ we will set
  $$
  X_t = \{\xi\in X: t\in\xi\}.
  $$
\end{definition}

We next start preparing for the construction of our partial action of $\G$ on $X$.

\begin{proposition} \label{FilBasesAndAllThat} If \ $t\in\G$ and $\eta$ is a
filter base containing $t^*$, then
  $$
  t\eta:= \{ts: s\in\eta\}
  $$
  is a filter base containing $t$.  If moreover $\eta'$ is another
filter base containing $t^*$, such that $\gen\eta = \gen{\eta'}$, then
$\gen{t\eta}=\gen{t\eta'}$.
\end{proposition}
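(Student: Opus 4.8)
The plan is to verify directly that $t\eta$ satisfies the two conditions of Definition~\ref{DefineParFilBase}, that is, Proposition~\ref{ParfilConds}(i--ii), and then to handle the independence of the generated filter from the choice of filter base. For the first part, let $u,v\in t\eta$, say $u=ts$ and $v=tr$ with $s,r\in\eta$. I would compute $uu^*v=tss^*t^*tr$. Since $\eta$ contains $t^*$ and is a filter base, by Proposition~\ref{ParfilConds}(ii) applied inside $\eta$ we have $t^*t r' \in\eta$ for suitable elements, and more to the point $ss^*t^*t\in\eta$ whenever $s,t^*\in\eta$ (this is exactly Proposition~\ref{ParfilConds}(ii) with the roles $s\mapsto s$, $t\mapsto t^*$); multiplying once more we get $ss^*(t^*t)(t^*t)^*r \in\eta$ by another application of (ii), and since $t^*tt^*t=t^*t$ this is $ss^*t^*t r\in\eta$. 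Hence $uu^*v = t(ss^*t^*tr)\in t\eta$, which is Proposition~\ref{ParfilConds}(ii) for $t\eta$. Condition (i) then follows by taking $u=v$, as remarked after Definition~\ref{DefineParFilBase}. Also $t = t(t^*t) = t(t^* )^* \cdot$-ish: more carefully, since $t^*\in\eta$ we have $t^*(t^*)^*=t^*t\in\eta$ by (i) in $\eta$, so $t\cdot(t^*t)\in t\eta$, and $t(t^*t)=t$, giving $t\in t\eta$.

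For the second assertion, suppose $\eta'$ is another filter base with $t^*\in\eta'$ and $\gen\eta=\gen{\eta'}$; I want $\gen{t\eta}=\gen{t\eta'}$. By symmetry it suffices to prove $\gen{t\eta}\subseteq\gen{t\eta'}$, and for that it is enough to show every element of $t\eta$ lies in $\gen{t\eta'}$, since $\gen{t\eta'}$ is a filter (hence upward-closed by Proposition~\ref{ParfilConds}(iii)) and $\gen{t\eta}$ is the upward closure of $t\eta$. So take $s\in\eta$; since $s\in\gen\eta=\gen{\eta'}$, there is $s'\in\eta'$ with $s'\leq s$. Now I want to find an element of $t\eta'$ below $ts$. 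The natural candidate is $t s''$ where $s''$ is built from $s'$ and $t^*$: take $s'' = (t^*t)(t^*t)^* s' = t^*t s' \in\eta'$ (again Proposition~\ref{ParfilConds}(ii) in $\eta'$, using $t^*\in\eta'$). Then $ts'' = tt^*t s' = ts'$. Since $s'\leq s$, we get $ts'' = ts' \leq ts$, so $ts\in\gen{t\eta'}$ as desired. This gives $t\eta\subseteq\gen{t\eta'}$, hence $\gen{t\eta}\subseteq\gen{t\eta'}$, and the reverse inclusion is identical.

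The main technical point — and the only place one must be slightly careful — is the bookkeeping in the first part: one must see that $ss^*t^*t r$ genuinely lands in $\eta$ using only condition (ii) of Proposition~\ref{ParfilConds}, applied iteratively, together with the fact that $t^*\in\eta$ and that $t^*t$ is idempotent with $(t^*t)(t^*t)^* = t^*t$. Once the closure of $\eta$ under the relevant products is unwound, everything else is routine multiplication in $\G$ (using $tt^*t=t$) plus the upward-closure of the generated filter from Proposition~\ref{ParfilConds}(iii). I do not expect any genuine obstacle; the statement is essentially a compatibility check that the translation map $\eta\mapsto t\eta$ on filter bases descends to filters.
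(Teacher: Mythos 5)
Your argument is correct in substance and follows essentially the same route as the paper: verify the filter-base axioms for $t\eta$ directly, then deduce $\gen{t\eta}=\gen{t\eta'}$ from $\gen\eta=\gen{\eta'}$ by passing elements down through the order. For axiom (ii) your bookkeeping, though more laborious than necessary, does land: with $w=ss^*t^*\in\eta$ (which is what Proposition~\ref{ParfilConds}(ii) actually gives for $s,t^*\in\eta$ --- not $ss^*t^*t$, as you wrote) one has $ww^*=ss^*t^*t$, and then $ww^*r=ss^*t^*tr\in\eta$ by a second application of (ii), whence $uu^*v=t(ss^*t^*tr)\in t\eta$. The paper is slicker: since idempotents commute, $uu^*v=tss^*t^*tr=tt^*t\,ss^*r=t(ss^*r)$, and $ss^*r\in\eta$ by a single application of (ii). The second half of your proof is fine; the detour through $s''=t^*ts'$ is unnecessary, since $s'\in\eta'$ already gives $ts'\in t\eta'$, and $s'\leq s$ gives $ts'\leq ts$.

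The one genuine slip is your treatment of axiom (i) for $t\eta$. Taking $u=v$ in your computation yields only $uu^*u=u\in t\eta$, which says nothing; and the remark after Definition~\ref{DefineParFilBase} that you invoke (``(i) follows from (ii) by taking $s=t$'') is itself unreliable --- in a group every nonempty subset satisfies Proposition~\ref{ParfilConds}(ii) trivially, since $ss^*t=t$, while (i) forces the subset to contain the identity. So you should prove (i) directly, as the paper does: for $u=ts$ with $s\in\eta$ one has $uu^*=tss^*t^*=t(ss^*t^*)$, and $ss^*t^*\in\eta$ by Proposition~\ref{ParfilConds}(ii) applied to $s,t^*\in\eta$; this is a second place where the hypothesis $t^*\in\eta$ is essential. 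With that one line added, your proof is complete.
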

\begin{proof}
  Given $u\in t\eta$, write $u=ts$, with $s\in\eta$.  Then
  $
  ss^*t^*\in\eta,
  $
  so
  $$
  uu^* = tss^*t^* \in t\eta.
  $$
  If $v$ is another element of $t\eta$, write $v = tr$, with $r\in\eta$, and notice
that $ss^*r\in\eta$, so
  $$
  uu^*v = tss^*t^*tr = tt^*tss^*r =  tss^*r \in t\eta.
  $$
  This proves that $t\eta$ is a filter base.  Since $t^*\in\eta$, we have
$t^*t\in\eta$, so
  $$
  t = tt^*t \in t\eta.
  $$

Given $\eta'$ as above, suppose that $u\in\gen{t\eta}$.  Then there exists
$s\in\eta$ such that $u\geq ts$.  Since
  $$
  s\in \eta \subseteq \gen\eta = \gen{\eta'},
  $$
  there exists $s'\in\eta'$ such that $s\geq s'$.  Therefore,
  $$
  u \geq ts \geq ts' \in t\eta',
  $$
  proving that $u\in \gen{t\eta'}$.  Thus shows that $\gen{t\eta}\subseteq \gen{t\eta'}$, and the
reverse inclusion may be proved in a similar way.
\end{proof}

We are now ready to introduce the partial action which is the main object of this section.

\begin{theorem} \label{theo:DefineMainRep} For each $t\in \G$, define
  $$
  \pi_t : X_{t^*} \to X_t,
  $$
  by $\pi_t(\xi) = \gen{t\xi}$, for all $\xi\in X_{t^*}$.  Then $\pi$ is a
partial action of\/ $\G$ on $X$.
\end{theorem}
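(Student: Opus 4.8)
The plan is to verify the four conditions of Proposition~\ref{EasyParRep}, which is the most economical of our characterizations since three of the conditions are mere inclusions. Throughout, I will rely heavily on Proposition~\ref{FromBaseToFilter} and especially on Proposition~\ref{FilBasesAndAllThat}, which already packages most of the bookkeeping about $t\eta$.

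First I would dispose of the domain and range issues: for $\xi\in X_{t^*}$ (so $t^*\in\xi$ and $\xi$ is in particular a filter base containing $t^*$), Proposition~\ref{FilBasesAndAllThat} tells us that $t\xi$ is a filter base containing $t$, so by Proposition~\ref{FromBaseToFilter} its generated filter $\gen{t\xi}=\pi_t(\xi)$ is a genuine filter containing $t$, i.e.\ lies in $X_t$. Thus each $\pi_t$ is a well-defined map $X_{t^*}\to X_t$. Next, condition~(i), that $\pi_s\inv=\pi_{s^*}$: I would show $\pi_{s^*}\circ\pi_s=\id$ on $X_{s^*}$ and $\pi_s\circ\pi_{s^*}=\id$ on $X_s$. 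Given $\xi\in X_{s^*}$, one has $s^*(s\xi)\subseteq\xi$, and conversely every $u\in\xi$ satisfies $u=s^*s u$ when $s^*s\in\xi$ — which holds because $\xi$ is a filter and $s^*\in\xi$ forces $s^*s=s^*(s^*)^*\in\xi$ by Proposition~\ref{ParfilConds}(i) — hence $u\in s^*(s\xi)$. This gives $\gen{s^*(s\xi)}=\xi$; combining with the last sentence of Proposition~\ref{FilBasesAndAllThat} (to pass from the base $s\xi$ to any base of $\pi_s(\xi)=\gen{s\xi}$) yields $\pi_{s^*}(\pi_s(\xi))=\xi$. The other composite is symmetric.

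For condition~(iii), that $s\leq t$ implies $X_s\subseteq X_t$: this is immediate from Proposition~\ref{ParfilConds}(iii), since any filter containing $s$ contains every element above $s$, in particular $t$. For condition~(iv), the cocycle-type identity: take $\xi\in X_{t^*}\cap X_{t^*s^*}$. Then $t\xi$ is a filter base containing $t$ and also — since $t^*s^*\in\xi$ and $\xi$ is a filter, so $s^*\geq t^*s^*$... wait, rather $t(t^*s^*)=tt^*s^*\leq s^*$, so $s^*\in t\xi$ after passing to the generated filter; more carefully, $\gen{t\xi}=\pi_t(\xi)\in X_{s^*}$ by the remark opening the proof of Proposition~\ref{EasyParRep} applied with our not-yet-proved (ii), so I should order things so that (ii) comes before (iv). Granting that, $\pi_s(\pi_t(\xi))=\gen{s\cdot\gen{t\xi}}$, and I would show this equals $\gen{(st)\xi}=\pi_{st}(\xi)$ by checking $s(t\xi)=(st)\xi$ as sets (trivial: $s(tu)=(st)u$) and then using Proposition~\ref{FilBasesAndAllThat}'s stability statement to replace the inner base $t\xi$ by an arbitrary base of $\gen{t\xi}$ without changing the generated filter. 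The one genuine point requiring care here is that $st\xi$ is a legitimate filter base, which needs $(st)^*=t^*s^*\in\xi$ — precisely our hypothesis $\xi\in X_{t^*s^*}$.

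The main obstacle is condition~(ii), $\pi_s(X_{s^*}\cap X_t)\subseteq X_{st}$: given a filter $\xi$ with $s^*\in\xi$ and $t\in\xi$, I must show $st\in\pi_s(\xi)=\gen{s\xi}$, i.e.\ that some element of $s\xi$ lies below $st$. The natural candidate is $s(s^*st)=ss^*st\cdot$... let me instead take $s\cdot(s^*s\cdot t)$: since $s^*\in\xi$ gives $s^*s\in\xi$, and $t\in\xi$, Proposition~\ref{ParfilConds}(ii) (filter base property, with the roles $s^*s$ playing the idempotent) gives $s^*s t=(s^*s)(s^*s)^* t\in\xi$, so $s(s^*st)\in s\xi$; and $s(s^*st)=ss^*st\leq st$ because $ss^*st=ss^* \cdot st\leq st$. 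Hence $st\in\gen{s\xi}$, as required. I would present exactly this computation, taking care that the "multiply on the left by $s$ and land below $st$" step is spelled out as $ss^*st = (ss^*)(st) \leq st$ using that $ss^*$ is idempotent and the definition of the order.

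With all four conditions of Proposition~\ref{EasyParRep} verified, $\pi$ is a partial action of $\G$ on $X$.
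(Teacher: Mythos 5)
Your overall strategy --- verifying the four conditions of Proposition~\ref{EasyParRep} --- is exactly the paper's, and your treatments of conditions (ii) and (iii) are correct (though (ii) admits a shortcut: since $t\in\xi$ one has $st\in s\xi$ directly, with no need for the detour through $ss^*st\leq st$). Two steps, however, do not hold up as written. First, in condition (i) you assert that every $u\in\xi$ satisfies $u=s^*su$ once $s^*s\in\xi$; this is false in general (take $\G$ a semilattice and $u$ strictly above $s^*s$). What is true, and what suffices, is $u\geq s^*su\in s^*s\xi$, which puts $u$ in $\gen{s^*s\xi}$ rather than in $s^*(s\xi)$ itself; the conclusion $\gen{s^*s\xi}=\xi$ survives, so this is a local repair.

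The more serious issue is condition (iv). To pass from $\gen{s\gen{t\xi}}$ to $\gen{s(t\xi)}$ you invoke the stability clause of Proposition~\ref{FilBasesAndAllThat}, but that clause applies to two filter bases \emph{both containing $s^*$}, and there is no guarantee that $s^*\in t\xi$ --- only that $s^*\in\gen{t\xi}$. The paper's own proof closes with a remark warning against precisely this move. The identity you need, $\gen{st\xi}=\gen{s\gen{t\xi}}$, is nonetheless true, but it has to be proved directly by double inclusion: $st\xi\subseteq s\gen{t\xi}$ gives one containment, and for the other, any $u\geq sv$ with $v\geq tw$ and $w\in\xi$ satisfies $u\geq stw\in st\xi$. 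With that argument substituted for the appeal to Proposition~\ref{FilBasesAndAllThat}, and the small fix in (i), your proof coincides with the paper's.
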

\begin{proof} We will check the conditions in Proposition~\ref{EasyParRep}.
  For this, we first claim that $\pi_{s^*}\pi_s$ is the identity on $X_{s^*}$,
for every $s$ in $\G$.  Given $\xi\in X_{s^*}$, we have $s\in s\xi$, and
$s\xi$ is a filter base for $\pi_s(\xi)$, so we deduce from
Proposition~\ref{FilBasesAndAllThat} that $s^*s\xi$ is a filter base for
$\pi_{s^*}(\pi_s(\xi))$.  Thus, in order to prove our claim, it suffices to show
that
  $
  \gen{s^*s\xi} = \xi.
  $
  As $s^*\in\xi$, one has that $s^*s\xi\subseteq\xi$, and hence   $\gen{s^*s\xi} \subseteq
  \xi$.  Conversely, given $u\in\xi$, we have
  $$
  u \geq s^*su \in s^*s\xi,
  $$
  so $u \in\gen{s^*s\xi}$, proving that
$\gen{s^*s\xi} = \xi$, and therefore that $\pi_{s^*}(\pi_s(\xi)) = \xi$.  Reversing
the roles of $s$ and $s^*$ we deduce that $\pi_s\pi_{s^*}$ is the identity on
$X_s$, whence $\pi_{s^*}=\pi_s\inv$, proving Proposition~\ref{EasyParRep}(i).

In order to
prove Proposition~\ref{EasyParRep}(ii), let $t,s\in\G$ and let $\xi\in X_{s^*}\cap X_t$.
This means that $s^*,t\in\xi$, so evidently
  $$
  st \in s\xi \subseteq \gen{s\xi} = \pi_s(\xi),
  $$
  which implies that $\pi_s(\xi)\in X_{st}$.

With respect to Proposition~\ref{EasyParRep}(iii), assume that $s\leq t$, and pick $\xi\in
X_s$.  Then
  $$
  t \geq s  \in \xi,
  $$
  and hence $t\in\xi$, by Proposition~\ref{ParfilConds}(iii), proving that $\xi\in X_t$.

Finally, assuming that $\xi\in X_{t^*}\cap X_{t^*s^*}$, we claim that
  $$
  \gen{st\xi} =   \gen{s\gen{t\xi}}.
  $$
  Clearly $t\xi \subseteq \gen{t\xi}$, so $st\xi \subseteq s\gen{t\xi}$, and hence
$\gen{st\xi} \subseteq \gen{s\gen{t\xi}}$.  Conversely, given $u\in \gen{s\gen{t\xi}}$,
there exists $v\in \gen{t\xi}$ such that
  $
  u \geq sv,
  $
  and there exists $w\in\xi$, such that
  $
  v\geq tw.
  $
  So
  $$
  u \geq sv \geq stw \in st\xi,
  $$
  implying that $u\in\gen{st\xi}$.  This proves our claim and hence
  $$
  \pi_s(\pi_t(\xi)) =   \pi_s(\gen{t\xi})  = \gen{s\gen{t\xi}} = \gen{st\xi} = \pi_{st}(\xi).
  $$
The reader may have thought of another strategy to prove this last fact, using
$t\xi$ as a filter base for $\pi_t(\xi)$ when computing
  $
  \pi_s(\pi_t(\xi)).
  $
  However, there is no guarantee that  $s^*\in t\xi$, so this might not work.
\end{proof}

We shall now employ the above partial action to deduce the promised fact that the
normal form of each element in $\SG$ is
unique.   Recall that every $e$-set is a filter base, so if $A$ is an
$e$-set we may form the filter $\gen A$ generated by $A$.

\begin{lemma} \label{lem:WhatItContains} Let $A$ be an $e$-set, where $e\in
E(\G)$, and let $\xi=\gen A$.  Then
  $$
  A = \{s\in \xi: ss^*=e\}.
  $$
\end{lemma}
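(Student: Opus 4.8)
We want to show that for an $e$-set $A$ and $\xi = \gen A$, the set $A$ coincides with $\{s\in\xi : ss^* = e\}$. The inclusion ``$\subseteq$'' is nearly immediate: every $s\in A$ satisfies $ss^* = e$ by the definition of an $e$-set, and $s\in A \subseteq \gen A = \xi$. So the work is entirely in the reverse inclusion. Suppose $s\in\xi$ with $ss^* = e$. By definition of $\gen A$, there is some $a\in A$ with $a\le s$, say $s = af$ for an idempotent $f$ — equivalently $a = sf'$ for the idempotent $f' = a^*a$, so $a \le s$. Since $a\in A$ we have $aa^* = e = ss^*$, and the goal is to promote $a\le s$ to $a = s$.

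The key computation is this. From $a\le s$ we get $a = ss^*a = \dots$; more usefully, $a\le s$ implies $s^*a = a^*a$ is idempotent and $a = s(s^*a)$. Now multiply $aa^* = e = ss^*$ on suitable sides: since $a\le s$, also $aa^*\le ss^*$ as idempotents, but these are equal, so $aa^* = ss^*$ forces — together with $a\le s$ — that $a = s$. Concretely: $a\le s$ means $a = aa^*s$ (one of the standard characterizations of the natural partial order), and since $aa^* = e = ss^*$ we get $a = ss^*s = s$. That is the whole argument, and the ``main obstacle'' is really just recalling the right characterization of $\le$: $a\le s \iff a = aa^*s \iff a = sa^*a$.

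So the proof I would write is short. First dispatch ``$\subseteq$'' in one line. Then take $s\in\xi$ with $ss^* = e$, pull an $a\in A$ with $a\le s$ out of the definition of $\gen A$, note $aa^* = e$, invoke $a\le s \Rightarrow a = aa^*s$, and substitute $aa^* = e = ss^*$ to conclude $a = ss^*s = s\in A$. I do not anticipate any genuine difficulty; the only thing to be careful about is using the partial-order identity in the correct handedness ($a = aa^*s$ rather than $a = sa^*a$), since here it is the range idempotent $aa^* = ss^* = e$ that we control, not the domain idempotent.

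\begin{proof}
  If $s\in A$, then $ss^* = e$ by Definition~\ref{DefineESet}, and $s\in A\subseteq \gen A = \xi$; this gives the inclusion $A\subseteq \{s\in\xi : ss^* = e\}$.

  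For the reverse inclusion, let $s\in\xi$ be such that $ss^* = e$. Since $\xi = \gen A$, there is $a\in A$ with $a\le s$. Because $a\in A$ we have $aa^* = e = ss^*$. Recalling that the natural partial order satisfies $a\le s \iff a = aa^*s$, we obtain
  $$
  a = aa^*s = ss^*s = s,
  $$
  so $s = a\in A$. Hence $\{s\in\xi : ss^* = e\}\subseteq A$, and the two sets are equal.
\end{proof}
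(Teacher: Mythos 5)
Your proof is correct and follows essentially the same route as the paper's: pull an element $a\in A$ with $a\le s$ out of the definition of $\gen A$, and use the characterization $a\le s\iff a=aa^*s$ together with $aa^*=e=ss^*$ to conclude $a=ss^*s=s$. The paper's computation is literally $t=tt^*s=es=ss^*s=s$, which is the same calculation with different letters.
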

  \begin{proof}
    It is evident that $A$ is contained in the set in the right-hand side.
Conversely, given $s\in\xi$, such that $ss^*=e$, there exists $t\in A$,
such that $s\geq t$.
  Thus,
  $$
  t = tt^*s = es = ss^*s = s,
  $$
  which implies that $s\in A$.
\end{proof}

\begin{theorem}\label{theo:UniquenessOfNormalForm}
Let $x,y\in \SG$ be written in normal form as $x = \e A[s]$
and $y = \e B[t]$.  If $x=y$, then $s=t$ and $A=B$.
\end{theorem}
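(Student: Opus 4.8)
The plan is to separate distinct normal forms using the canonical partial action $\pi$ of $\G$ on the space $X$ of filters from Theorem~\ref{theo:DefineMainRep}. First I would dispose of the easy half: applying the degree map $\partial$ and invoking Proposition~\ref{prop:NormalFormDegree} gives $s=\partial(x)=\partial(y)=t$ immediately. Writing $e\defeq ss^*=tt^*$, the normal form hypothesis says that $A$ and $B$ are both finite $e$-sets (in the sense of Definition~\ref{DefineESet}), each containing $s$; so the real task is to recover the set $A$ from the element $x=\e A[s]\in\SG$.

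To do this, I would push $\pi$ through the universal property: Proposition~\ref{prop:UniProp} yields a semigroup homomorphism $\Sh\pi\colon\SG\to\I(X)$ with $\Sh\pi([u])=\pi_u$ for every $u\in\G$. The key observation is that for each $u\in\G$ the idempotent $\Sh\pi(\e u)=\pi_u\pi_{u^*}=\pi_u\pi_u\inv$ is exactly the partial identity of $\I(X)$ on the set $X_u=\{\xi\in X: u\in\xi\}$ — this is precisely the statement, proved inside Theorem~\ref{theo:DefineMainRep}, that $\pi_u\pi_{u^*}$ is the identity on $X_u$. Since these partial identities commute, writing $X_A\defeq\bigcap_{a\in A}X_a=\{\xi\in X: A\sbe\xi\}$ we get that $\Sh\pi(\e A)=\prod_{a\in A}\Sh\pi(\e a)$ is the partial identity on $X_A$. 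Hence $\Sh\pi(x)=\Sh\pi(\e A)\,\pi_s$ is a partial bijection whose range is $X_A\cap X_s$, and this equals $X_A$ because $s\in A$ forces $X_A\sbe X_s$. Likewise the range of $\Sh\pi(y)$ is $X_B$. From $x=y$ we obtain $\Sh\pi(x)=\Sh\pi(y)$ and therefore $X_A=X_B$ as subsets of $X$.

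Finally, I would recover $A$ itself from the subset $X_A\sbe X$. Since $A$ is a filter base, $\gen A$ is a filter (Proposition~\ref{FromBaseToFilter}) lying in $X_A$; and for any $\xi\in X_A$ one has $A\sbe\xi$, whence $\gen A\sbe\xi$ by upward-closedness of filters (Proposition~\ref{ParfilConds}(iii)). Thus $\gen A$ is the least element of $X_A$ with respect to inclusion, and similarly $\gen B$ is the least element of $X_B$; as $X_A=X_B$ we conclude $\gen A=\gen B$. Then Lemma~\ref{lem:WhatItContains}, applied with the common value $e=ss^*=tt^*$, gives $A=\{u\in\gen A: uu^*=e\}=\{u\in\gen B: uu^*=e\}=B$, completing the proof.

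I expect the only delicate point to be the bookkeeping in the second paragraph: correctly reading the composite $\Sh\pi(\e A)\,\pi_s$ inside $\I(X)$ and checking that its range is exactly $X_A$ rather than a proper subset — which is where the normal-form condition $s\in A$ is genuinely used. Everything else (the identity $\pi_u\pi_u\inv=\mathrm{id}_{X_u}$, commutativity of the idempotents, and the minimality of $\gen A$ in $X_A$) is quoted directly from the results established above.
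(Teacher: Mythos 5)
Your proof is correct and follows essentially the same route as the paper: degree map for $s=t$, the canonical partial action on filters to extract $X_A$ from $x$, and Lemma~\ref{lem:WhatItContains} to recover $A$ from the filter $\gen A$. The only cosmetic differences are that the paper first derives $\e A=\e B$ algebraically (multiplying $x=y$ by $[s^*]$) before applying $\Sh\pi$, where you instead compute the range of $\Sh\pi(x)$ directly, and that you organize the final step via minimality of $\gen A$ in $X_A$ rather than the paper's two symmetric inclusions.
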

\begin{proof}
  By Proposition~\ref{prop:NormalFormDegree}, we have
  $$
  s = \partial(x) = \partial(y) = t,
  $$
  so $s=t$.
  Since $s\in A$, and $s=t\in B$, we deduce that
  $$
  \e A = \e A \e s =
  \e A[s][s^*] =   x[s^*] =  y[s^*] = \e B[s][s^*] = \e B\e s = \e B,
  $$
  meaning that $\e A=\e B$.
  Next, consider the semigroup homomorphism
  $$
  \Sh\pi:\SG\to\I(X),
  $$
  provided by Proposition~\ref{prop:UniProp} in terms of the partial action $\pi$ introduced in
  Theorem~\ref{theo:DefineMainRep}.
  For every $s\in\G$, we have
  $$
  \Sh\pi(\e s) =
  \Sh\pi([s][s^*]) =
  \Sh\pi([s])  \Sh\pi([s^*]) =
  \pi_s \pi_{s^*} = \pi_s\pi_s\inv = {id}_{X_s}.
  $$
  Therefore,
  $$
  \Sh\pi(\e A) =
  \pros_{s\in A} \Sh\pi(\e s) =
  \pros_{s\in A} {id}_{X_s} = id_{X_A},
  $$
  where by  ${X_A}$ we of course mean $\bigcap_{s\in A} X_s$. Having seen above
that $\e A=\e B$, we then deduce that $X_A = X_B$.

Since $\e A[s]$ and $\e B[s]$ are in normal form we have by definition
that $A$ and $B$ are $e$-sets, where $e= ss^*$.
  Recalling that every $e$-set is a filter base, we have by Proposition~\ref{FromBaseToFilter}
that $\xi:= \gen A$ is a filter, which evidently belongs to $X_A$,  and hence also
to $X_B$,  meaning that $B\subseteq\xi$.
  Using Lemma~\ref{lem:WhatItContains}, we then obtain
  $$
  A = \{s\in \xi: ss^*=e\} \supseteq B,
  $$
  and a symmetric argument yields $A\subseteq B$, proving that $A=B$ and
completing the proof.
\end{proof}

One natural question is whether $\S\SG\cong\SG$. The following result shows that this happens if and only if $\G$ is a semilattice.

\begin{proposition}
Let $\G$ be an inverse semigroup. Then the following assertions are equivalent:
\begin{enumerate}[(i)]
\item every partial homomorphism of $\G$ is a homomorphism;
\item the canonical map $\iota_G\colon g\mapsto [g]$ from $\G$ into $\SG$ is a homomorphism;
\item $\{[g]: g\in \G\}$ is an inverse subsemigroup of $\SG$ \textup(and hence equals $\SG$\textup);
\item the canonical map $\G\to \SG$ is an isomorphism \textup(whose inverse is the degree map $\partial\colon\SG\to \G$\textup);
\item $\G=E(\G)$, that is, $\G$ is a semilattice.
\end{enumerate}
In particular, every inverse semigroup $\G$ which is not a semilattice has a partial homomorphism which is not a representation.
Moreover, in this case we have a strictly increasing chain of inclusions:
$$\G\into\SG\into\S\SG\into\ldots$$
\end{proposition}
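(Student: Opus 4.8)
The plan is to prove the chain of equivalences (i)$\Rightarrow$(ii)$\Rightarrow$(iii)$\Rightarrow$(iv)$\Rightarrow$(v)$\Rightarrow$(i), since each link is short, and then deduce the two ``in particular'' statements. The implication (i)$\Rightarrow$(ii) is immediate: the canonical map $\iota_G$ is always a partial homomorphism (as noted right after \eqref{CanonicalMap}), so if every partial homomorphism is a homomorphism, $\iota_G$ is one. For (ii)$\Rightarrow$(iii): if $\iota_G$ is a homomorphism then $\{[g]:g\in\G\}$ is closed under the product, and it is closed under the involution because $\iota_G(g^*)=[g^*]=[g]^*$ by \eqref{PreserveStar} applied in the inverse semigroup $\SG$; hence it is an inverse subsemigroup, and since it generates $\SG$ it equals $\SG$. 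For (iii)$\Rightarrow$(iv): if $\{[g]:g\in\G\}=\SG$, then every element of $\SG$ has the form $[g]$, and by Theorem~\ref{theo:UniquenessOfNormalForm} together with Proposition~\ref{prop:NormalFormDegree} the degree map $\partial$ is injective on normal forms; since $\partial([g])=g$, the map $\iota_G$ is a bijection, and being a partial homomorphism which is also surjective onto an inverse semigroup with inverse $\partial$, it is an isomorphism. (Alternatively one checks directly that under (iii) the product $[g][h]$ must equal $[gh]$ because both have degree $gh$ and the normal form of $[g][h]$, computed via Proposition~\ref{prop:FormulaProd}, collapses.)

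The heart of the argument is (iv)$\Rightarrow$(v) and (v)$\Rightarrow$(i). For (v)$\Rightarrow$(i): if $\G=E(\G)$ is a semilattice, then every $e\in\G$ is idempotent, and for any partial homomorphism $\pi\colon\G\to\T$ and any $s,t\in\G$ we have, writing $e=s$, $f=t$ (both idempotent) and using Corollary~\ref{cor:IdempotentsInPrep}, that $\pi(s)\pi(t)=\pi(s)\pi(t)$ where $\pi(s)=\pi(e)$ is idempotent; applying Corollary~\ref{cor:IdempotentsInPrep}(ii) with the idempotent $e=s$ gives $\pi(s)\pi(t)=\pi(st)$, so $\pi$ is a homomorphism. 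For (iv)$\Rightarrow$(v): suppose $\iota_G\colon\G\to\SG$ is an isomorphism. Then in particular $\iota_G$ is a homomorphism, so by (ii)$\Rightarrow$(iii) we have $\{[g]:g\in\G\}=\SG$. Now take any $s\in\G$; I will show $s$ is idempotent. Consider the element $\e s=[s][s^*]\in\SG$, which is idempotent with $\partial(\e s)=ss^*$. Since $\SG=\{[g]:g\in\G\}$, we may write $\e s=[g]$ for some $g\in\G$; applying $\partial$ gives $g=ss^*$, so $\e s=[ss^*]$, hence by Proposition~\ref{IdempotentsInBracket}(i) $\e s=\e{ss^*}$. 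But then $[s][s^*]=[ss^*][ss^*]^*=[ss^*]$, and applying $\partial$ after multiplying on the right by $[s]$: from $[s][s^*][s]=[ss^*][s]=[s]$ (the first equality by Proposition~\ref{IdempotentsInBracket}(ii)) we learn nothing new, so instead multiply $\e s=[ss^*]$ on the right by $[s]$ to get $[s]=[s][s^*][s]=\e s[s]=[ss^*][s]=[ss^*s]=[s]$, again trivial. The right move: since $\iota_G$ is \emph{injective}, from $[s][s^*]=\e s=[ss^*]=\iota_G(ss^*)$ and $[s][s^*]=\iota_G(?)$ we instead use that $[s]^2$ makes sense: compute $[s][s]$. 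We have $[s][s]=[s][s^*][s][s]\cdot$(stuff)? Cleaner: apply $\iota_G^{-1}=\partial$ to the identity $[s][s^*]=[ss^*]$ is vacuous; instead note $[s][s^*]=[ss^*]$ forces, after right-multiplying by $[s]$ and using Proposition~\ref{IdempotentsInBracket}(ii), nothing. So I take the genuinely useful route: $\SG=\{[g]\}$ means by Proposition~\ref{FirstCannonical} every normal form $\e A[t]$ equals some $[g]$, and taking $A=\{ss^*,s\}$, $t=s$ for a non-idempotent $s$ with $ss^*s=s$ but $s\ne ss^*$ gives $\e{\{ss^*,s\}}[s]=[g]$ with $g=s$; comparing with $[s]$'s own normal form $\e s\e{ss^*}[s]$, by uniqueness of normal forms (Theorem~\ref{theo:UniquenessOfNormalForm}) we need $\{ss^*,s\}=\{ss^*\}$ as $ss^*$-sets only when $s=ss^*$, i.e.\ $s$ is idempotent. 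This is the one step I expect to require care: extracting a genuine non-idempotent and feeding it into uniqueness of normal forms.

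For the final assertions: the first is the contrapositive of (i)$\Leftrightarrow$(v) already proved. For the strict chain $\G\into\SG\into\S\SG\into\cdots$, note $\iota_G$ is always injective (since $\partial\circ\iota_G=\id_\G$), and it is an isomorphism if and only if (v) holds, i.e.\ if and only if $\G$ is a semilattice; but if $\G$ is not a semilattice, neither is $\SG$ (an idempotent-pure preimage of a non-idempotent is a non-idempotent, by Proposition~\ref{prop:NormalFormDegree}, so $\partial_{\SG}\colon\S\SG\to\SG$ being essentially injective shows $\S\SG$ has a non-idempotent whose image $[s]$ is non-idempotent in $\SG$, hence $\SG\ne E(\SG)$), so the inclusion $\SG\into\S\SG$ is again proper, and inductively every inclusion in the chain is strict. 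The main obstacle is the bookkeeping in (iv)$\Rightarrow$(v) sketched above; everything else is formal.
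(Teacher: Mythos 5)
Your overall route coincides with the paper's: (i)$\Rightarrow$(ii)$\Rightarrow$(iii)$\Rightarrow$(iv) via the fact that $\{[g]:g\in\G\}$ generates $\SG$ and that $\partial$ is a one-sided inverse homomorphism, (v)$\Rightarrow$(i) via Corollary~\ref{cor:IdempotentsInPrep}, and your argument for the strictness of the chain $\G\into\SG\into\S\SG\into\cdots$ is correct (and more explicit than the paper, which does not spell that part out). The problem is the key implication (iv)$\Rightarrow$(v). You correctly derive $\e s=\e{ss^*}$ from surjectivity of $\iota_G$ and the degree map, but the step you finally settle on to extract $s=ss^*$ from this identity does not work: you compare $\e{\{ss^*,s\}}[s]$ with ``$[s]$'s own normal form $\e s\e{ss^*}[s]$'', yet these are the \emph{same} normal form (same $A$-set $\{s,ss^*\}$, same degree $s$), so Theorem~\ref{theo:UniquenessOfNormalForm} yields only the tautology $\{ss^*,s\}=\{s,ss^*\}$ and not the claimed $\{ss^*,s\}=\{ss^*\}$.

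The comparison has to be made between the two idempotents themselves, not between $[s]$ and itself. The normal form of $\e s$ is $\e s\e{ss^*}[ss^*]=\e{\{s,ss^*\}}[ss^*]$ (its degree is $ss^*$, not $s$), while $\e{ss^*}=[ss^*]$ has normal form $\e{\{ss^*\}}[ss^*]$. Applying Theorem~\ref{theo:UniquenessOfNormalForm} to the identity $\e s=\e{ss^*}$ then gives $\{s,ss^*\}=\{ss^*\}$, hence $s=ss^*$, so every $s\in\G$ is idempotent. This is precisely the paper's argument, phrased there as the injectivity of the map $g\mapsto\e g$. With that one sentence repaired, your proof is complete and essentially identical to the paper's; everything else you wrote stands.
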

\begin{proof}
The implication (i)$\Rightarrow$(ii) is obvious since the canonical map $\iota_G\colon\G\to \SG$ is a partial homomorphism.
On the other hand, if the canonical map $g\mapsto [g]$ is a homomorphism, then $\{[g]: g\in \G\}$ is an inverse subsemigroup of $\SG$ which generates it, so that $\SG=\{[g]: g\in \G\}$. In this case, $\iota_G$ must be an
isomorphism from $\G$ to $\SG$ because
$\partial\colon \SG\to \G$ is a homomorphism satisfying $\partial([g])=g$ for all $g\in \G$. Hence, we have the implications (ii)$\Rightarrow$(iii)$\Rightarrow$(iv). To prove (iv)$\Rightarrow$(v), suppose that $g\mapsto [g]$
is a homomorphism of $\G$. Then we have $\epsilon_g=[g][g^*]=[gg^*]=\epsilon_{gg^*}$.
Observe that the map $g\mapsto\e g$ is injective. In fact, the normal form of $\e g$ is $\e g \e{gg^*}[gg^*]$. Thus, if $\e g=\e h$, then
$\{g,gg^*\}=\{h,hh^*\}$ (by Theorem~\ref{theo:UniquenessOfNormalForm}) so that $g=h$. In particular,
since $\epsilon_g=\epsilon_{gg^*}$, it follows that $g=gg^*$ for all $g\in \G$, that is, $\G=E(\G)$.
Finally, to check (v)$\Rightarrow$(i), suppose that $\G$ is a semilattice and $\pi$ is a partial homomorphism of $\G$.
Then  $\pi(ef)=\pi(e)\pi(f)$ for all $e,f\in E(\G)=\G$ by Corollary~\ref{cor:IdempotentsInPrep}, so that $\pi$ is a homomorphism.
\end{proof}

\begin{proposition}
Let $\G$ be a finite inverse semigroup. Given $e\in E(\G)$, define $\G^e\defeq\{s\in \G\st ss^*=e\}$ and let $p_e=|\G^e|$ be the number of elements
of $\G^e$. Then $\SG$ has exactly
\begin{equation}\label{eq:OrderOfE(S(G))}
|E(\SG)|=\sum\limits_{e\in E(\G)}2^{p_e-1}
\end{equation}
idempotent elements and exactly
\begin{equation}\label{eq:OrderOfS(G)-E(S(G))}
|\SG-E(\SG)|=\sum\limits_{e\in E(\G)}2^{p_e-2}(p_e-1)
\end{equation}
non-idempotent elements. In particular,
the total number of elements of $\SG$ is given by
\begin{equation}\label{eq:OrderOfS(G)}
|\SG|=\sum\limits_{e\in E(\G)}2^{p_e-1}+\sum\limits_{e\in E(\G)}2^{p_e-2}(p_e-1)=\sum\limits_{e\in E(G)}2^{p_e-2}(p_e+1).
\end{equation}
\end{proposition}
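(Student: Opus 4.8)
The plan is to parametrize the elements of $\SG$ explicitly and then count. By Proposition~\ref{FirstCannonical} every element of $\SG$ admits a normal form $\e A[t]$, and by Theorem~\ref{theo:UniquenessOfNormalForm} this form is unique; since $\G$ is finite, the finiteness requirement on $A$ is automatic. Hence the assignment $(t,A)\mapsto \e A[t]$ is a bijection from the set of pairs $(t,A)$ with $t\in\G$ and $A$ a subset of $\G^{tt^*}$ containing both $t$ and $tt^*$, onto $\SG$. Moreover $\partial(\e A[t])=t$, and by Proposition~\ref{prop:NormalFormDegree} the element $\e A[t]$ is idempotent if and only if $t$ is.

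To count $E(\SG)$ I restrict to pairs $(t,A)$ with $t$ idempotent. Then $t=tt^*=:e$, and the data amounts to an idempotent $e\in E(\G)$ together with a subset $A$ of $\G^e$ with $e\in A$. For fixed $e$ there are $2^{p_e-1}$ such subsets, so summing over $e\in E(\G)$ yields~\eqref{eq:OrderOfE(S(G))}.

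For the non-idempotent elements I restrict to pairs $(t,A)$ with $t\notin E(\G)$. Then $e:=tt^*$ is an idempotent, necessarily distinct from $t$, so $A$ must contain the two distinct elements $t$ and $e$, leaving $2^{p_e-2}$ choices for the remaining elements of $A\sbe\G^e$. To re-index by $e\in E(\G)$ I count the non-idempotent $t$ with $tt^*=e$: any idempotent $f\in\G^e$ satisfies $f=ff^*=e$, so $e$ is the unique idempotent of $\G^e$ and $\G^e$ has exactly $p_e-1$ non-idempotent elements. Hence the non-idempotents of $\SG$ whose degree lies in $\G^e$ number $(p_e-1)2^{p_e-2}$; summing over $e$ gives~\eqref{eq:OrderOfS(G)-E(S(G))}. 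Finally, adding the two counts and simplifying $2^{p_e-1}+(p_e-1)2^{p_e-2}=2^{p_e-2}(p_e+1)$ gives~\eqref{eq:OrderOfS(G)}.

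There is no genuine obstacle here beyond careful bookkeeping: once the normal form is known to be unique, everything reduces to elementary counting. The only subtlety worth flagging is the degenerate case $p_e=1$ (that is, $\G^e=\{e\}$): it contributes no non-idempotent element and the single idempotent $[e]$, consistent with interpreting $2^{p_e-2}(p_e+1)=\tfrac12\cdot2=1$.
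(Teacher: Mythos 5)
Your proof is correct and follows essentially the same route as the paper: parametrize elements by their (unique) normal forms $\e A[t]$, use Proposition~\ref{prop:NormalFormDegree} to separate idempotents from non-idempotents via the degree, and count the admissible subsets $A\sbe\G^e$ in each case. The observation that $e$ is the unique idempotent of $\G^e$, and the remark about the degenerate case $p_e=1$, are both accurate and consistent with the paper's formulas.
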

\begin{proof}
Given $e\in E(\G)$, let $X^e=\{A\sbe\G^e\st e\in A\}$. Observe that $|X^e|=2^{p_e-1}$.
We already know (see~Proposition\ref{prop:NormalFormDegree})
that the idempotents of $\SG$ have the form $\epsilon_A$ for some $e\in E(\G)$ and some $A\in X^e$.
And by Theorem~\ref{theo:UniquenessOfNormalForm}, for $A,B\sbe \G^e$, we have $\epsilon_A=\epsilon_B$ if and only if $A=B$ (observe that the normal form of $\epsilon_A$ is $\epsilon_A[e]$ whenever $A\in X^e$). This
implies~\eqref{eq:OrderOfE(S(G))}.
To prove \eqref{eq:OrderOfS(G)-E(S(G))}, observe that (by Theorem~\ref{theo:UniquenessOfNormalForm})
every non-idempotent element of $\SG$ can be uniquely written as
$\epsilon_A[s]$ for some $A\sbe X^e$ and some $s\in A$ with $s\not=e$. Notice that we have exactly $2^{p_e-2}$ sets $A$ in $X^e$ with this property. Since we have $p_e-1$ elements $s\in \G^e$ with $s\not=e$,
Equation~\eqref{eq:OrderOfS(G)-E(S(G))} follows as well.
\end{proof}

\section{Twisted partial actions on $C^*$-algebras}

In this section, we are going to enrich sets with further structure and require a partial action of an inverse semigroup $\G$
to be compatible with this structure. For instance, if $X$ is a topological space, and $\alpha\colon\G\to \I(X)$ is a partial action
of $\G$ on $X$, then it is natural to require the domain $\D_{s^*}\defeq \dom(\alpha_s)$ and the range $\D_s\defeq\ran(\alpha_s)$
of $\alpha_s$ to be open subsets of $X$ and $\alpha_s\colon\D_{s^*}\to \D_s$ to be continuous for all $s\in \G$.
Observe that this implies that each $\alpha_s$ is a homeomorphism with inverse $\alpha_{s^*}$. Alternatively, we could say that $\alpha_s$ is
a partial homeomorphism of $X$. So, a partial action of $\G$ on a topological space $X$ is just a partial homomorphism of $\G$ into the
inverse semigroup of all partial homeomorphisms of $X$. We also require that the union $\cup\D_s$ be dense in $X$ -- otherwise this can be arranged replacing $X$ by the closure of $\cup\D_s$.

Similarly, if $B$ is a \cstar{}algebra and $\beta\colon\G\to \I(B)$ is a partial action of $\G$ on $B$, it is natural to require
each $\beta_s\colon\D_{s^*}\to \D_s$ to be a \emph{partial automorphism} between (closed, two-sided) ideals $\D_{s^*},\D_s$ of $B$.
Of course, by a partial automorphism of $B$ we mean a \Star{}isomorphism $I\congto J$ between ideals $I,J$ of $B$.
Hence, a partial action of $\G$ on a \cstar{}algebra $B$ is just a partial homomorphism from $\G$ into the inverse semigroup of
all partial automorphisms of $B$. In addition, we require that the union $\cup\D_s$ spans a dense subspace of $B$.
If $B=\contz(X)$ is a commutative \cstar{}algebra, then partial automorphisms of $B$ correspond to partial homeomorphisms of $X$, so
partial actions on $B$ correspond to partial actions on $X$.

The notion of partial actions of inverse semigroups on \cstar{}algebras appears in \cite{Sieben:crossed.products}.
In the case of groups, it can be seen as a special case of the twisted partial actions defined by the second named author in \cite{Exel:TwistedPartialActions}. In \cite{BussExel:Regular.Fell.Bundle}, we considered a notion
of twisted (global) actions for inverse semigroups improving Sieben's definition in \cite{SiebenTwistedActions}.
We already know that partial actions of $\G$ correspond bijectively to actions of $\SG$.
A similar result should also hold with twists, that is, \emph{twisted partial actions} of $\G$ should correspond to twisted (global) actions of $\SG$.
In order to prove this, we first have to find the right definition of twisted partial actions in the realm of inverse semigroups.
Using the canonical partial homomorphism of $\G$ in $\SG$ to view $\G$ as a subset of $\SG$, a short way to define
this is to say that a twisted partial action of $\G$ is the restriction of a twisted action of $\SG$ (in
the sense of \cite[Definition~4.1]{BussExel:Regular.Fell.Bundle}) to $\G$. However,
since this concept should be a simultaneous generalization of \cite[Definition~4.1]{BussExel:Regular.Fell.Bundle} and
\cite[Definition~2.1]{Exel:TwistedPartialActions}, it is not so difficult to guess what should be the right definition:

Given a \cstar{}algebra $B$, we write $\M(B)$ for the multiplier algebra of $B$ and $\U\M(B)$ for the group of unitary multipliers of $B$.

\begin{definition}\label{def:TwistedPartialAction}
Let $\G$ be an inverse semigroup and let $B$ be a \cstar{algebra}. A \emph{twisted partial action} of $\G$ on $B$ is
a pair $(\beta,\omega)$, where $\beta=\{\beta_s\}_{s\in\G}$ is a family of partial automorphisms $\beta_s\colon\D_{s^*}\to \D_s$
of $B$ and $\omega=\{\omega(s,t)\}_{s,t\in \G}$ is a family of unitary multipliers $\omega(s,t)\in \U\M(\D_s\cap\D_{st})$, such that
the linear span of the family of ideals $\{\D_s\}_{s\in \G}$ is dense in $B$ and
the following conditions are satisfied for all $r,s,t\in \G$ and $e,f\in E(\G)$:
\end{definition}
\begin{enumerate}[(i)]
\item $\beta_r(\D_{r^*}\cap\D_s)=\D_r\cap \D_{rs}$;
\item $\beta_r(\beta_s(x))=\omega(r,s)\beta_{rs}(x)\omega(r,s)^*$ for all $x\in \D_{s^*}\cap\D_{s^*r^*}$;
        \label{def:twisted action:item:beta_r beta_s=Ad_omega(r,s)beta_rs}
\item $\beta_r(x\omega(s,t))\omega(r,st)=\beta_r(x)\omega(r,s)\omega(rs,t)$ whenever $x\in\D_{r^*}\cap\D_s\cap\D_{st}$;
        \label{def:twisted action:item:CocycleCondition}
\item $\omega(e,f)=1_{ef}$ and $\omega(r,r^*r)=\omega(rr^*,r)=1_{r}$, where $1_r$ is the unit of $\M(\D_r)$;
        \label{def:twisted action:item:omega(r,r*r)=omega(e,f)=1_ef_and_omega(rr*,r)=1}
\item $\omega(s^*,e)\omega(s^*e,s)x=\omega(s^*,s)x$ for all $x\in \D_{s^*e}$.
        \label{def:twisted action:item:omega(s*,e)omega(s*e,s)x=omega(s*,s)x}
\end{enumerate}

It is not difficult to see that the above definition is in fact a generalization of both \cite[Definition~4.1]{BussExel:Regular.Fell.Bundle} and
\cite[Definition~2.1]{Exel:TwistedPartialActions}. Let us observe that the first axiom above is not included in the definition
of twisted (global) actions appearing in \cite[Definition~4.1]{BussExel:Regular.Fell.Bundle}, but it is a consequence of the other axioms.
In fact, if $(\beta,\omega)$ is a twisted action, then by \cite[Lemma~4.6]{BussExel:Regular.Fell.Bundle} we have
$\beta_r(\D_{r^*}\cap\D_s)=\D_{rs}=\D_r\cap\D_{rs}$ because (by the same lemma) $\D_{rs}=\D_{rss^*r^*}\sbe \D_{rr^*}=\D_r$.

\begin{remark}
Axiom (ii) in the above definition is equivalent to
\begin{equation}\label{eq:CompositionOfBetas}
\beta_r\circ \beta_s=\Ad_{\omega(r,s)}\circ\beta_{rs}\quad\mbox{(composition of partial automorphisms)},
\end{equation}
where $\Ad_{\omega(r,s)}\colon\D_r\cap\D_{rs}\to \D_r\cap\D_{rs}$ is the partial automorphism defined by $\Ad_{\omega(r,s)}(z)=\omega(r,s)z\omega(r,s)^*$. In fact, observe that (by the properties (iii) and (iv) to be proved
in Proposition~\ref{prop:PropertiesTwistedPartialAction} below)
$$\dom(\beta_r\circ\beta_s)=\beta_{s}\inv(\D_s\cap\D_{r^*})=\D_{s^*}\cap\D_{s^*r^*}=\D_{s^*r^*r}\cap\D_{s^*r^*}.$$
On the other hand,
$$\dom(\Ad_{\omega(r,s)}\circ\beta_{rs})=\beta_{rs}\inv(\D_{rs}\cap\D_r)=\D_{s^*r^*}\cap\D_{s^*r^*r}.$$
Thus, $\dom(\beta_r\circ\beta_s)=\dom(\Ad_{\omega(r,s)}\circ\beta_{rs})$. In the untwisted case (that is, if $\omega(r,s)$ is the unit multiplier of $\D_r\cap\D_{rs}$ for all $r,s$), we may interpret
$\Ad_{\omega(r,s)}\circ\beta_{rs}$ as the restriction of $\beta_{rs}$ to the domain of $\beta_r\circ\beta_s$. In this case, Equation~\eqref{eq:CompositionOfBetas} just says that $\beta_{rs}$ extends $\beta_r\circ\beta_s$.
\end{remark}

Here are some properties of twisted partial actions (compare with \cite[Lemma~4.6]{BussExel:Regular.Fell.Bundle}).

\begin{proposition}\label{prop:PropertiesTwistedPartialAction}
Let $(\beta,\omega)$ be a twisted partial action of $\G$ on $B$ as above. Then the following holds:
\begin{enumerate}[(i)]
\item  $\D_s\sbe \D_{ss^*}$ for all $s\in \G$. Moreover, $\D_s=\D_{ss^*}$ for all $s\in \G$ if and only if $(\beta,\omega)$ is a twisted
(global) action in the sense of \cite[Definition~4.1]{BussExel:Regular.Fell.Bundle};
\item $\beta_e\colon\D_e\to \D_e$ is the identity map for all $e\in E(\G)$;
\item $\D_s\cap\D_t=\D_s\cap\D_{ss^*t}=\D_{tt^*s}\cap\D_{ss^*t}=\D_{es}\cap\D_{et}$ for all $s,t\in \G$, where $e=ss^*tt^*$; moreover $\D_e\cap\D_s=\D_{es}$ for all $e\in E$ and $s\in
    S$.\label{prop:PropertiesTwistedPartialAction:RelationBetweenDomains}
\item $\beta_s\inv(\D_s\cap\D_{r^*})=\D_{s^*}\cap \D_{s^*r^*}$;
\item $s\leq t \Rightarrow \D_s\sbe\D_t$;
\end{enumerate}
\end{proposition}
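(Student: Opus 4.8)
The plan is to prove each of the five assertions of Proposition~\ref{prop:PropertiesTwistedPartialAction} in turn, leaning heavily on the five axioms of Definition~\ref{def:TwistedPartialAction}. Several of these are analogues of facts already recorded for ordinary partial actions in Proposition~\ref{RepInSimmetric} and Proposition~\ref{EasyParRep}, so the structure of the argument should mirror those; the twist merely requires us to carry the unitary multipliers $\omega(s,t)$ around carefully.

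First I would establish (ii), that $\beta_e$ is the identity on $\D_e$ for $e\in E(\G)$. Apply axiom (ii) with $r=s=e$: for $x\in\D_e$ (noting $\D_{e^*}=\D_e$ and $\D_{ee}=\D_e$) we get $\beta_e(\beta_e(x))=\omega(e,e)\beta_e(x)\omega(e,e)^*=\beta_e(x)$ since $\omega(e,e)=1_e$ by axiom (iv). Since $\beta_e$ is injective this forces $\beta_e(x)=x$. Then (i), $\D_s\sbe\D_{ss^*}$, follows by applying axiom (i) with $r=s$ and an appropriate second index, or more directly: take $r=ss^*$ (an idempotent) in axiom (i) to get $\beta_{ss^*}(\D_{ss^*}\cap\D_t)=\D_{ss^*}\cap\D_{ss^*t}$; choosing $t$ so that $ss^*t=s$, i.e. $t=s$, and using that $\beta_{ss^*}=\id$ on $\D_{ss^*}$, gives $\D_{ss^*}\cap\D_s=\D_{ss^*}\cap\D_{s}$ — I need instead to argue $\D_s\sbe\D_{ss^*}$ by taking $r=s$, $s\rightsquigarrow s^*s$ in axiom (i): $\beta_s(\D_{s^*}\cap\D_{s^*s})=\D_s\cap\D_{ss^*s}=\D_s\cap\D_s=\D_s$, so $\D_s$ is in the range of $\beta_s$ restricted to $\D_{s^*}\cap\D_{s^*s}\sbe\D_{s^*}$; since $\beta_s$ has range $\D_s$ this says the restriction is onto, hence $\D_{s^*}\cap\D_{s^*s}=\D_{s^*}$, i.e. $\D_{s^*}\sbe\D_{s^*s}$, and replacing $s$ by $s^*$ gives $\D_s\sbe\D_{ss^*}$. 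The ``moreover'' clause is essentially the definition: if $\D_s=\D_{ss^*}$ for all $s$, the axioms of Definition~\ref{def:TwistedPartialAction} collapse to those of \cite[Definition~4.1]{BussExel:Regular.Fell.Bundle} (as the remark after the definition already indicates via \cite[Lemma~4.6]{BussExel:Regular.Fell.Bundle}), and conversely a global twisted action satisfies $\D_s=\D_{ss^*}$ by that same lemma.

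Next I would do (iii). The identity $\D_e\cap\D_s=\D_{es}$ for idempotent $e$ comes from axiom (i) with $r=e$: $\beta_e(\D_e\cap\D_s)=\D_e\cap\D_{es}$, and since $\beta_e=\id$ on $\D_e$ the left side is $\D_e\cap\D_s$; but also $\D_{es}\sbe\D_{es(es)^*}=\D_{ess^*e}=\D_{ess^*}\sbe\D_e$ using (i) applied to $es$ and the fact that $e,ss^*$ are commuting idempotents, and similarly $\D_{es}\sbe\D_s$ — wait, that last containment needs $es\le s$ and assertion (v), so I should prove (v) first or interleave. Actually (v) is quick: if $s\le t$ then $s=te$ for some idempotent $e$, and by what was just shown $\D_s=\D_{te}\sbe\D_{(te)(te)^*}=\D_{tee t^*}=\D_{tet^*}$; I'd rather argue $\D_{te}=\D_t\cap\D_{te}$ — hmm, cleanest is: $\D_{te}\sbe\D_{tet^*}$ and since $tet^*\le tt^*$ is idempotent below $tt^*$... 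Let me instead derive (v) from axiom (i): with $r=t$ we have $\beta_t(\D_{t^*}\cap\D_{e})=\D_t\cap\D_{te}$, so $\D_{te}\sbe\D_t$, and combined with $\D_{te}\sbe\D_{(te)(te)^*}$ from (i) this gives everything. So the order will be: (ii), then (i), then (v) using (i), then (iii) using (i), (ii), (v), then finally (iv) which is $\beta_s\inv(\D_s\cap\D_{r^*})=\D_{s^*}\cap\D_{s^*r^*}$ — this is immediate from axiom (i) with $r\rightsquigarrow s^*$: $\beta_{s^*}(\D_s\cap\D_{r^*})=\D_{s^*}\cap\D_{s^*r^*}$, and $\beta_{s^*}=\beta_s\inv$ since $\beta_s\colon\D_{s^*}\to\D_s$ and $\beta_{s^*}\colon\D_s\to\D_{s^*}$ are mutually inverse (which itself follows from axiom (ii) with $r=s^*$, $s\rightsquigarrow s$ together with (ii) of the proposition and the normalization (iv)).

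The main obstacle I anticipate is bookkeeping in assertion (iii): there is a chain of three (really four) equalities $\D_s\cap\D_t=\D_s\cap\D_{ss^*t}=\D_{tt^*s}\cap\D_{ss^*t}=\D_{es}\cap\D_{et}$ with $e=ss^*tt^*$, and each equality needs the idempotent-intersection formula $\D_f\cap\D_u=\D_{fu}$ applied in the right guise, together with the observation that $ss^*t\le t$ so $\D_{ss^*t}\sbe\D_t$ (whence $\D_s\cap\D_{ss^*t}\sbe\D_s\cap\D_t$) and conversely $\D_s\cap\D_t\sbe\D_{ss^*}\cap\D_t=\D_{ss^*t}$ by (i) and the idempotent formula, which forces $\D_s\cap\D_t\sbe\D_s\cap\D_{ss^*t}$. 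The symmetric manipulations and the final rewriting in terms of $e=ss^*tt^*$ are routine but must be done without sign errors; none of the twist multipliers enter assertion (iii) at all, so the difficulty is purely combinatorial-lattice-theoretic rather than analytic. I expect the whole proof to be a few lines per item once (i) and the formula $\D_e\cap\D_s=\D_{es}$ are in hand.
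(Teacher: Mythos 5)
Your overall strategy---first $\beta_e=\id_{\D_e}$ from axioms (ii) and (iv), then $\D_s\sbe\D_{ss^*}$ from axiom (i) and injectivity of $\beta_s$, then the idempotent intersection formula---is the paper's, and items (i) and (ii) are fine. But there is a genuine gap at the one place you flagged and then tried to patch, namely the derivation of (v). From axiom (i) with $r=t$ you get $\beta_t(\D_{t^*}\cap\D_e)=\D_t\cap\D_{te}$ and conclude ``so $\D_{te}\sbe\D_t$.'' That inference does not follow: the identity only describes the intersection $\D_t\cap\D_{te}$, which is trivially contained in $\D_t$; it says nothing about whether $\D_{te}$ itself sits inside $\D_t$. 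Appending $\D_{te}\sbe\D_{(te)(te)^*}=\D_{tet^*}$ does not rescue it either: at best that chain lands you in $\D_{tt^*}$, and $\D_t$ is in general a \emph{proper} ideal of $\D_{tt^*}$ for a partial action, so you still do not reach $\D_t$. Since your proof of the first equality in (iii) uses (v) for one of the two inclusions ($ss^*t\le t$ hence $\D_{ss^*t}\sbe\D_t$), the chain (v) $\to$ (iii) as you ordered it collapses.

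The repair is to reverse the dependency, which is what the paper does. The first equality of (iii) needs no inclusions and no (v): for $f\in E(\G)$, $\beta_f=\id$ and axiom (i) give the \emph{equality} $\D_f\cap\D_t=\D_f\cap\D_{ft}$; taking $f=ss^*$ and using $\D_s\sbe\D_{ss^*}$ twice,
$$\D_s\cap\D_t=\D_s\cap\D_{ss^*}\cap\D_t=\D_s\cap\D_{ss^*}\cap\D_{ss^*t}=\D_s\cap\D_{ss^*t}.$$
Now (v) is immediate: if $s\le t$ then $ss^*t=s$, so $\D_s\cap\D_t=\D_s$, i.e.\ $\D_s\sbe\D_t$. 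With (v) in hand the rest of (iii), including the clause $\D_e\cap\D_s=\D_{es}$, goes through as you outline. A smaller point on (iv): $\beta_{s^*}$ is not literally $\beta_s\inv$ in the twisted setting---axiom (ii) only yields $\beta_{s^*}\circ\beta_s=\Ad_{\omega(s^*,s)}$ on $\D_{s^*}$---so rather than invoking $\beta_{s^*}=\beta_s\inv$, compute $\beta_s(\D_{s^*}\cap\D_{s^*r^*})=\D_s\cap\D_{ss^*r^*}=\D_s\cap\D_{r^*}$ from axiom (i) and item (iii), and then apply $\beta_s\inv$ to both sides.
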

\begin{proof}
(i) Observe that $\D_s=\beta_s(\D_{s^*})=\beta_s(\D_{s^*}\cap \D_{s^*})=\D_s\cap\D_{ss^*}$, so that $\D_s\sbe\D_{ss^*}$ for all $s\in \G$.
If $\D_{s}=\D_{ss^*}$ for all $s\in \G$, we have to check that $(\beta,\omega)$ is a twisted global action, that is, the axioms (i)-(iv) appearing in \cite[Definition~4.1]{BussExel:Regular.Fell.Bundle} are satisfied. But the
axioms are essentially the same by looking at property (iii) we are going to prove below.

(ii) By axiom (iii) in Definition~\ref{def:TwistedPartialAction}, we have $\omega(e,e)=1_e$, so that
$\beta_e(\beta_e(x))=\omega(e,e)\beta_e(x)\omega(e,e)=\beta_e(x)$ for all $\D_e$ by Definition~\ref{def:TwistedPartialAction}(ii).
Since $\beta_e\colon\D_e\to \D_e$ is an isomorphism, this implies that $\beta_e=\id_{\D_e}$.

(iii) First, given $f\in E(\G)$, since $\beta_f=\id_{\D_f}$, we have $\D_f\cap\D_t=\beta_f(\D_f\cap\D_t)=\D_f\cap\D_{ft}$ by Definition~\ref{def:TwistedPartialAction}(i). Since $\D_s\sbe\D_{ss^*}$, taking $f=ss^*$, we get
$\D_s\cap\D_t=\D_s\cap\D_{ss^*}\cap\D_t=\D_s\cap\D_{ss^*t}$. Now, applying this again, we get $\D_s\cap\D_{ss^*t}=\D_{ss^*t}\cap\D_s=
\D_{ss^*t}\cap\D_{ss^*tt^*s}=\D_{ss^*t}\cap\D_{tt^*s}=\D_{es}\cap\D_{et}$.

(iv) By Definition~\ref{def:TwistedPartialAction}(i) and (iii) above, we have $\beta_s(\D_{s^*}\cap\D_{s^*r^*})=\D_s\cap\D_{ss^*r^*}=\D_s\cap\D_{r^*}$, whence (iv) follows.

(v) If $s\leq t$, then $s=ss^*t$, so that $\D_s\cap\D_t\eq{(iii)}\D_s\cap\D_{ss^*t}=\D_s$. Hence $\D_s\sbe\D_t$.
\end{proof}

Let $(\beta,\omega)$ be a twisted partial action of an inverse semigroup $S$ on a \cstar{}algebra $B$ as in Definition~\ref{def:TwistedPartialAction}. Given an element $x=\epsilon_{r_1}\cdots\epsilon_{r_n}[r]$ of $\SG$ in
normal form, we define $\tilde\D_x\defeq \D_{r_1}\cdots\D_{r_n}\cdot\D_r$ and
$\tilde\beta_x\colon\tilde\D_{x^*}\to \tilde\D_{x}$ by $\tilde\beta_x\defeq\beta_{r_1}\beta_{r_1}\inv\cdots\beta_{r_n}\beta_{r_n}\inv\beta_r$ (as composition of partial maps). If $y=\epsilon_{s_1}\cdots\epsilon_{s_m}[s]$ is
another element of $\SG$ in normal form, we also define
$\tilde\omega(x,y)$ as the restriction of the unitary multiplier $\omega(r,s)\in\U\M(\D_r\cdot\D_{rs})$ to $\tilde\D_{xy}=\D_{r_1}\cdots\D_{r_n}\D_{rs_1}\cdots\D_{rs_m}\cdot\D_r\cdot\D_{rs}\sbe \D_r\cdot\D_{rs}$.

The following result is a generalization of Theorems~4.2 and 4.3 in \cite{SiebenTwistedActions}.

\begin{proposition}\label{prop:TwistedActionCorrespondence}
The pair $(\tilde\beta,\tilde\omega)$ (together with the domains $\tilde\D_x$) above defined is a (global) twisted action of $\SG$ on $B$.
Moreover, the assignment $(\beta,\omega)\mapsto (\tilde\beta,\tilde\omega)$ is a bijective correspondence between twisted partial actions of $\G$ on $B$ and twisted (global) actions of $\SG$ on $B$.
\end{proposition}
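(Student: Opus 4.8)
The plan is to verify the axioms (i)--(v) of Definition~\ref{def:TwistedPartialAction} (applied to the global case, where additionally $\tilde\D_x=\tilde\D_{xx^*}$ should hold, so that $(\tilde\beta,\tilde\omega)$ is a twisted global action in the sense of \cite[Definition~4.1]{BussExel:Regular.Fell.Bundle} via Proposition~\ref{prop:PropertiesTwistedPartialAction}(i)), and then to construct the inverse assignment. First I would record some book-keeping facts about the domains: for $x=\epsilon_{r_1}\cdots\epsilon_{r_n}[r]$ in normal form, $\tilde\D_x=\D_{r_1}\cdots\D_{r_n}\cdot\D_r$, and since $x$ is in normal form we have $r\in\{r_1,\dots,r_n\}$ and $r_ir_i^*=rr^*$ for all $i$, so $\D_{r_i}\sbe\D_{rr^*}$ by Proposition~\ref{prop:PropertiesTwistedPartialAction}(i); hence each $\tilde\D_x$ is an ideal of $B$ (a product of ideals all sitting inside $\D_{rr^*}$, so really an intersection). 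Using Proposition~\ref{prop:PropertiesTwistedPartialAction}(iii) one computes $\tilde\D_{xx^*}=\D_{r_1}\cdots\D_{r_n}\cdot\D_{rr^*}=\D_{r_1}\cdots\D_{r_n}=\tilde\D_x$ (the last step since each $\D_{r_i}\sbe\D_{rr^*}$ and $r=r_j$ for some $j$), which is exactly the condition identifying $(\tilde\beta,\tilde\omega)$ as a \emph{global} twisted action rather than merely a partial one. I would also verify that $\tilde\beta_x$ is a well-defined $^*$-isomorphism $\tilde\D_{x^*}\to\tilde\D_x$, using that each $\beta_{r_i}\beta_{r_i}\inv$ is the identity on the ideal $\D_{r_i}$ together with Proposition~\ref{prop:PropertiesTwistedPartialAction}(iv) to see the composition of partial maps lands where claimed; the unit axioms in Definition~\ref{def:TwistedPartialAction}(iv) for $\tilde\omega$ are immediate from the corresponding ones for $\omega$ together with $\tilde\D_{xx^*}=\tilde\D_x$.

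Next I would check the two substantive cocycle-type identities. For axiom~\eqref{def:twisted action:item:beta_r beta_s=Ad_omega(r,s)beta_rs}, writing $x=\epsilon_{r_1}\cdots\epsilon_{r_n}[r]$ and $y=\epsilon_{s_1}\cdots\epsilon_{s_m}[s]$ in normal form, one has $xy=\epsilon_{(rss^*r^*A)\cup(rB)}[rs]$ in normal form by Proposition~\ref{prop:FormulaProd} (with $A=\{r_1,\dots,r_n\}$, $B=\{s_1,\dots,s_m\}$), so $\tilde\beta_{xy}$ is governed by $\beta_{rs}$ twisted by $\omega(r,s)$; since each $\epsilon_{r_i}$ and $\epsilon_{rs_j}$-factor contributes only an identity partial automorphism on its ideal, the composition $\tilde\beta_x\circ\tilde\beta_y$ reduces on its domain to $\beta_r\circ\beta_s$ restricted to the appropriate ideal, and Definition~\ref{def:TwistedPartialAction}\eqref{def:twisted action:item:beta_r beta_s=Ad_omega(r,s)beta_rs} for $(\beta,\omega)$ gives the result after checking that both sides have the same domain $\tilde\D_{y^*}\cap\tilde\D_{y^*x^*}$ — a computation of ideal-products using Proposition~\ref{prop:PropertiesTwistedPartialAction}(iii)--(iv). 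For the cocycle identity~\eqref{def:twisted action:item:CocycleCondition} with three elements $x,y,z$ in normal form and degrees $r,s,t$, associativity $(xy)z=x(yz)$ in $\SG$ forces $\tilde\omega(x,y)$ and $\tilde\omega(xy,z)$ to be compatible with $\tilde\omega(x,yz)$ and $\tilde\omega(y,z)$ exactly as dictated by the $(\beta,\omega)$-cocycle identity restricted to the smaller ideal $\tilde\D_{xyz}$; since all the $\tilde\omega$'s are literally restrictions of $\omega$-values (with degrees $r,s,rs,\dots$), the identity is inherited. Axiom~\eqref{def:twisted action:item:omega(s*,e)omega(s*e,s)x=omega(s*,s)x} is handled the same way: for $x\in\SG$ with $\partial(x)=s$ and $e'\in E(\SG)$ with $\partial(e')=e$, the relevant $\tilde\omega$-values are restrictions of $\omega(s^*,e)$, $\omega(s^*e,s)$, $\omega(s^*,s)$, so the identity descends from Definition~\ref{def:TwistedPartialAction}\eqref{def:twisted action:item:omega(s*,e)omega(s*e,s)x=omega(s*,s)x} for $(\beta,\omega)$. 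I would also need a consistency lemma showing all of the above is independent of the chosen normal-form representative, but by Theorem~\ref{theo:UniquenessOfNormalForm} normal forms are unique, so this is automatic once one checks the formulas are stated purely in terms of the data $(A,t)$.

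For the bijectivity claim I would construct the inverse: given a twisted global action $(\gamma,\tau)$ of $\SG$ on $B$, restrict along the canonical partial homomorphism $\iota_G\colon\G\to\SG$, $s\mapsto[s]$, setting $\beta_s\defeq\gamma_{[s]}$ (with domain $\D_{s^*}\defeq\tilde\D_{[s^*]}=\tilde\D_{\epsilon_{s^*}}$ — note $[s]$ is not in normal form, but $[s]=\epsilon_s[s]$ is, so $\gamma_{[s]}$ is defined) and $\omega(s,t)\defeq\tau([s],[t])$ suitably restricted to $\D_s\cap\D_{st}$. One checks $(\beta,\omega)$ is a twisted partial action of $\G$: the first axiom and axioms~\eqref{def:twisted action:item:beta_r beta_s=Ad_omega(r,s)beta_rs}--\eqref{def:twisted action:item:omega(s*,e)omega(s*e,s)x=omega(s*,s)x} follow by evaluating the corresponding $\SG$-axioms on generators $[s]$, using the relations (i)--(iii) of Definition~\ref{DefSG} and Proposition~\ref{IdempotentsInBracket} to handle idempotents $[e]=\epsilon_e$; the density of $\spn\{\D_s\}$ in $B$ follows from density of $\spn\{\tilde\D_x\}$ together with $\tilde\D_x\sbe\D_{r_1}$. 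Then one verifies that the two assignments are mutually inverse: starting from $(\beta,\omega)$, building $(\tilde\beta,\tilde\omega)$, and restricting back recovers $\beta_s=\tilde\beta_{\epsilon_s[s]}$ and $\D_{s^*}=\tilde\D_{\epsilon_{s^*}[s^*]}=\D_{s^*}$ on the nose (the extra $\beta_s\beta_s\inv$ factor is the identity); conversely, starting from $(\gamma,\tau)$ on $\SG$, the reconstructed $\tilde\gamma_x$ for $x=\epsilon_{r_1}\cdots\epsilon_{r_n}[r]$ equals $\gamma_{\epsilon_{r_1}}\cdots\gamma_{\epsilon_{r_n}}\gamma_{[r]}=\gamma_{\epsilon_{r_1}\cdots\epsilon_{r_n}[r]}=\gamma_x$ because $\gamma$ is a genuine homomorphism on $\SG$ (each $\gamma_{\epsilon_{r_i}}$ being idempotent with range-ideal its own domain), and likewise $\tilde\tau=\tau$ by uniqueness of normal forms.

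\textbf{Main obstacle.} The routine-but-delicate part is the careful matching of \emph{domains} throughout: every identity in Definition~\ref{def:TwistedPartialAction} is an equation of partial maps or of multipliers on a specified ideal, so for each axiom I must independently confirm that the two sides have literally the same ideal as domain before the algebraic identity can even be asserted. These verifications are exactly the kind of intersection-of-ideals computations carried out in Proposition~\ref{prop:PropertiesTwistedPartialAction}(iii)--(iv) and in Proposition~\ref{prop:FormulaProd}, now iterated over products of several ideals $\D_{r_i}$; they are not deep but they are where essentially all the work lies, and the normal-form bookkeeping (ensuring $r\in A$, $r_ir_i^*=rr^*$, and applying Proposition~\ref{prop:FormulaProd} to keep products in normal form) is what makes them tractable.
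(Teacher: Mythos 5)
Your proposal follows essentially the same route as the paper: establish the domain identities $\tilde\D_x=\D_{r_1}\cdots\D_{r_n}\cdot\D_r$ and $\tilde\D_{xx^*}=\tilde\D_x$ (the latter giving globality via Proposition~\ref{prop:PropertiesTwistedPartialAction}(i)), observe that $\tilde\beta$ and $\tilde\omega$ are restrictions of $\beta$ and $\omega$ so the axioms descend, and obtain the inverse assignment by restricting a twisted action of $\SG$ along $s\mapsto[s]$. The paper is in fact terser than you are on the axiom verifications and the mutual-inverse check, so your added detail is consistent with (and fills in parts of) its argument.
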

\begin{proof}
First, let us observe that if $x=\epsilon_{r_1}\cdots\epsilon_{r_n}[r]$ is an element of $\SG$ (not necessarily in normal form), then
$\tilde\D_x\defeq \D_{r_1}\cdots\D_{r_n}\cdot\D_r$. In fact, to check this it is enough to put $x$ in normal form and use
Proposition~\ref{prop:PropertiesTwistedPartialAction}\eqref{prop:PropertiesTwistedPartialAction:RelationBetweenDomains}. This justifies the fact that
$\tilde\D_{xy}=\D_{r_1}\cdots\D_{r_n}\D_{rs_1}\cdots\D_{rs_m}\cdot\D_r\cdot\D_{rs}$ for $x=\epsilon_{r_1}\cdots\epsilon_{r_n}[r]$ and $y=\epsilon_{s_1}\cdots\epsilon_{s_m}[s]$ because in this case
$xy=\epsilon_{r_1}\cdots\epsilon_{r_n}\epsilon_{rs_1}\cdots\epsilon_{rs_m}\epsilon_{r}[rs]$ (see Proposition~\ref{prop:FormulaProd}).
Also, since $x^*=[r^*]\epsilon_{r_1}\cdots\epsilon_{r_n}=\epsilon_{r^*r_1}\cdots\epsilon_{r^*r_n}[r^*]$ (see Proposition~\ref{PropFromInverse}(ii)), it follows that $\tilde\D_{x^*}=\D_{r^*}\cdot\D_{r^*r_1}\cdots\D_{r^*r_n}$.
Now, it is not difficult to see that the domain and range of $\beta_{r_1}\beta_{r_1}\inv\cdots\beta_{r_n}\beta_{r_n}\inv\beta_r$ are $\D_{r^*}\cdot\D_{r^*r_1}\cdots\D_{r^*r_n}$ and $\D_{r_1}\cdots\D_{r_n}\cdot\D_r$.
Therefore, $\tilde\beta_x\colon\tilde\D_{x^*}\to\tilde\D_{x}$ is a \Star{}isomorphism.
Also, observe that $\tilde\omega(x,y)$ is a unitary multiplier of $\tilde\D_{xy}$ since $\omega(r,s)$ is a unitary multiplier of $\D_r\cap\D_{rs}$ which contains $\tilde\D_{xy}$ as an ideal. It remains to check the axioms of
twisted action in \cite[Definition~4.1]{BussExel:Regular.Fell.Bundle} for the pair $(\tilde\beta,\tilde\omega)$. But since the $\tilde\beta$ and $\tilde\omega$ are just restrictions of $\beta$ and $\omega$, it is an exercise
to check that the axioms in~Definition~\ref{def:TwistedPartialAction} also hold for the pair $(\tilde\beta,\tilde\omega)$ and hence it is a twisted partial action of $\SG$ on $B$. To prove that $(\tilde\beta,\tilde\omega)$
is a (global) twisted action in the sense of \cite[Definition~4.1]{BussExel:Regular.Fell.Bundle}, by Proposition~\ref{prop:PropertiesTwistedPartialAction}(i), it is enough to check that $\tilde\D_x=\tilde\D_{xx^*}$ for all
$x\in\SG$. But if $x$ is written in normal form as $x=\epsilon_{r_1}\cdots\epsilon_{r_n}[r]$, then
$xx^*=\epsilon_{r_1}\cdots\epsilon_{r_n}\epsilon_r$ so that $\tilde\D_{xx^*}=\D_{r_1}\cdots\D_{r_n}\cdot\D_r=\tilde\D_{x}$.

Therefore, if $(\beta,\omega)$ is a twisted partial action of $\G$ on $B$, then the pair $(\tilde\beta,\tilde\omega)$ above defined is a (global) twisted action of $\SG$ on $B$. Conversely, if $(\tilde\beta,\tilde\omega)$ is
a twisted action of $\SG$ on $B$, then we can define $(\beta,\omega)$ by setting $\D_s\defeq \tilde\D_{[s]}$ (here, of course, $\tilde\D_x$ denotes the range of $\tilde\beta_x$ for all $x\in \SG$),
$\beta_s\defeq\tilde\beta_{[s]}$, and $\omega(s,t)\defeq\tilde\omega([s],[t])\in \U\M(\tilde\D_{[s][t]})$. Observe that by \cite[Lemma~4.6(v)]{BussExel:Regular.Fell.Bundle},
$\tilde\D_{[s][t]}=\tilde\D_{[s][s^*][st]}=\tilde\D_{[s]}\cdot \tilde\D_{[st]}=\D_s\cdot\D_{st}$, so that $\omega(s,t)$ is a unitary multiplier of $\D_s\cdot\D_{st}$. Again, it easy to check the axioms of twisted partial
action of the pair $(\beta,\omega)$. And it is clear that the assignments $(\beta,\omega)\mapsto (\tilde\beta,\tilde\omega)$ and $(\tilde\beta,\tilde\omega)\mapsto (\beta,\omega)$ are inverse to each other.
\end{proof}

\section{Fell bundles over $\G$ and $\SG$}

Recall that a \emph{Fell bundle} over an inverse semigroup $\G$ is a family $\A=\{\A_s\}_{s\in \G}$ of Banach spaces $\A_s$ together with
multiplication maps $\A_s\times\A_t\to \A_{st}$, involutions $\A_s\to \A_{s^*}$ for all $s,t\in \G$ and inclusions $\A_s\into\A_t$
whenever $s\leq t$ in $\G$. All this structure is required to be compatible in a suitable way. In addition, the norms on the fibers $\A_s$ are
required to satisfy the \cstar{}condition $\|a^*a\|=\|a\|^2$ for all $a\in \A_s$. Observe that $a^*a\in \A_{s^*s}$ whenever $a\in \A_s$.
In particular, the fiber $\A_e$ over an idempotent $e\in E(\G)$ is a \cstar{}algebra. One of the requirements in the definition of a Fell bundle is that $a^*a$ be a positive element of the \cstar{}algebra $\A_{s^*s}$ for all
$a\in \A_s$, that is, $a^*a=b^*b$ for some $b\in \A_{s^*s}$ (this is not automatically satisfied in general). There are two canonical \cstar{}algebras that can be constructed from a Fell bundle $\A$: one is the \emph{full
cross-sectional} \cstar{}algebra $C^*(\A)$ and the other is the \emph{reduced cross-sectional} \cstar{}algebra $C^*_\red(\A)$ of $\A$.
We refer the reader to \cite{Exel:noncomm.cartan} for the precise definition of Fell bundles and their cross-sectional \cstar{}algebras.

A famous result by Gelfand-Neumark asserts that every \cstar{}algebra is isomorphic to a concrete \Star{}algebra of operators, that is,
to some closed \Star{}subalgebra $A\sbe\Ls(\hils)$ of bounded operators on some Hilbert space $\hils$ (this is the reason why \cstar{}algebras are sometimes also called \emph{operator algebras}). An extension of this result
is also true for Fell bundles. A \emph{concrete} Fell bundle over $\G$
is a family $\A=\{\A_s\}$ of closed subspaces $\A_s\sbe \Ls(\hils)$, for some fixed Hilbert space $\hils$, such that
\begin{itemize}
\item $\A_s\A_t\sbe\A_{st}$ for all $s,t\in \G$;
\item $\A_s^*\sbe\A_{s^*}$ for all $s\in \G$; and
\item $\A_s\sbe\A_t$ whenever $s\leq t$ in $\G$.
\end{itemize}
Every (abstract) Fell bundle $\A$ is isomorphic to a concrete one. Indeed, this can be proved by applying the Gelfand-Neumark theorem mentioned above
to the full (or reduced) cross-sectional \cstar{}algebra $A=C^*(\A)$ and using the canonical representation of $\A$ into $A$ (see \cite{Exel:noncomm.cartan} for more details). Thus, given a Fell bundle $\A=\{\A_s\}_{s\in
\G}$, we may assume that there is a Hilbert space $\hils$ such that
$\A$ is a concrete Fell bundle in $\Ls(\hils)$ and such that $A=C^*(\A)$ is the \cstar{}subalgebra of $\Ls(\hils)$ defined as the closed linear span
of the fibers $\A_s\sbe\Ls(\hils)$. Observe that the fibers $\A_e$ over idempotents $e\in E=E(\G)$ are in this way \cstar{}subalgebras of $A\sbe\Ls(\hils)$. Let $\E=\{\A_e\}_{e\in \E}$ be the restriction of $\A$ to the
semilattice $E$.
By \cite[Proposition~4.3]{Exel:noncomm.cartan}, the \cstar{}algebra $B=C^*(\E)$ is isomorphic to the closed linear span of the fibers $\A_e$ with $e\in E$. So, we may further assume that $B$ is a \cstar{}subalgebra of
$A\sbe\Ls(\hils)$.
Hence, there is no loss of generality in assuming that a given Fell bundle is concrete and we shall usually do so in what follows.

\begin{definition}
Given a \cstar{}algebra $A$, we write $\Pl(A)$ for the semigroup of all closed subspaces of $A$ with respect to the
canonical multiplication:
$$M\cdot N\defeq \cspn MN\quad\mbox{for all }M,N\in \Pl(A).$$
If $\hils$ is a Hilbert space, we write $\Pl(\hils)$ for $\Pl(\Ls(\hils))$.
\end{definition}

Observe that $\Pl(A)$ is not an inverse semigroup in general, even if we restrict attention to subspaces $M\sbe A$ that are ternary ring of operators
in the sense that $M\cdot M^*\cdot M=M$. The problem is that $\Pl(A)$ is too big and idempotents do not commute in general. One way to find an inverse
subsemigroup inside $\Pl(A)$ is to consider a fixed \cstar{}subalgebra $B\sbe A$ and consider the subset $\Pl(A,B)\sbe\Pl(A)$ consisting of
all closed subspaces $M\sbe A$ satisfying
$$MM^*M\sbe M,\quad M^*M\sbe B,\quad MM^*\sbe B,\quad MB\sbe M\quad \mbox{and}\quad BM\sbe M.$$
Then it is not difficult to prove that $\Pl(A,B)$ is in fact an inverse subsemigroup of $\Pl(A)$.
This is done in the proof of \cite[Proposition~8.6]{BussExel:Regular.Fell.Bundle}.
The inverse of $M$ in $\Pl(A,B)$ is the adjoint $M^*=\{m^*\st m\in M\}$. If $A$ is $\Ls(\hils)$ and $B$ is a \cstar{}subalgebra of $\Ls(\hils)$, we write $\Pl(\hils,B)$ for $\Pl(\Ls(\hils),B)$.

In particular, if $\A=\{\A_s\}_{s\in \G}$ is a concrete Fell bundle in $\Ls(\hils)$ for some Hilbert space $\hils$,
we may consider the \cstar{}algebra $B=C^*(\E)\sbe\Ls(\hils)$ defined as the closed linear span of the fibers $\A_e$ with $e\in E(S)$.
Then $\Pl(\hils,B)$ is an inverse semigroup containing all the fibers $\A_s\sbe \Ls(\hils)$.
This assertion includes in particular the fact that each $\A_s$ is a ternary ring of operators (this is proved in
\cite[?]{BussExel:Regular.Fell.Bundle}). Observe that the fibers generate an inverse subsemigroup of $\Pl(\hils,B)$,
namely the inverse semigroup consisting of all finite products (that is, closed linear spans)
of the form $\A_{s_1}\cdot\A_{s_2}\cdots \A_{s_n}$ with $s_i\in \G$. If $\A$ is saturated,
meaning that $\cspn\A_s\A_t=\A_{st}$ for all $s,t\in \G$, then in a sense the Fell bundle $\A$ itself may be "confused" with this inverse semigroup.

\begin{theorem}\label{theo:FellBundlesoverGandS(G)}
Let $\A=\{\A_s\}_{s\in \G}$ be a concrete Fell bundle in $\Ls(\hils)$ over an inverse semigroup $\G$.
Then the map $\pi\colon\G\to \Pl(\hils)$ given by $\pi(s)\defeq\A_s$ is a partial homomorphism.
Moreover, $\pi$ is a homomorphism if and only if $\A$ is saturated. Let $\Sh\pi\colon\SG\to \Pl(\hils)$ be the representation associated to $\pi$
as in Proposition~\ref{prop:UniProp}, and let $\Sh\A_{x}\defeq\Sh\pi(x)$ for all $x\in \SG$. Then $\Sh\A=\{\Sh\A_{x}\}_{x\in \SG}$ is a saturated Fell bundle over $\SG$. Given $s_1,\ldots,s_n,s\in \G$, the fiber $\Sh\A_x$
over $x=\e{s_1}\cdots\e{s_n}[s]$ is given by
\begin{equation}\label{eq:FibersOfCheckA}
\Sh\A_x=\A_{s_1}\cdot\A_{s_1^*}\cdots\A_{s_n}\cdot\A_{s_n^*}\cdot\A_s.
\end{equation}
The assignment $\A\mapsto\tilde\A$ is a bijective correspondence between Fell bundles over $\G$ and saturated Fell bundles over $\SG$.
Furthermore, the canonical Fell bundle morphism $\Sh\A\to \A$ induces \cstar{}algebra isomorphisms $C^*(\Sh\A)\cong C^*(\A)$, $C^*_{\red}(\Sh\A)\cong C_\red^*(\A)$ and $C^*(\Sh\E)\cong C^*(\E)$, where $\Sh\E$ and $\E$ are
the restrictions of $\Sh\A$ and $\A$ to $E(\SG)$ and $E(\G)$, respectively.
\end{theorem}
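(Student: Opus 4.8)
The proof breaks naturally into several independent pieces: (1) $\pi(s)=\A_s$ is a partial homomorphism into $\Pl(\hils)$; (2) it is a homomorphism iff $\A$ is saturated; (3) the associated $\Sh\pi\colon\SG\to\Pl(\hils)$ yields a genuine \emph{saturated} Fell bundle $\Sh\A$ over $\SG$ with fibers given by~\eqref{eq:FibersOfCheckA}; (4) the assignment $\A\mapsto\Sh\A$ is bijective onto saturated Fell bundles over $\SG$; and (5) the cross-sectional \cstar{}algebras are preserved. I would prove these in order.

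For~(1), I would verify the three axioms in Definition~\ref{DefPRep} directly using the defining properties of a concrete Fell bundle: $\A_s\A_t\sbe\A_{st}$, $\A_s^*\sbe\A_{s^*}$, and $\A_s\sbe\A_t$ for $s\le t$. The only subtle point is the reverse inclusion needed in, e.g., $\A_s\A_t\A_{t^*}=\A_{st}\A_{t^*}$: one uses that a ternary ring of operators satisfies $\A_t=\A_t\A_t^*\A_t$ together with the inclusion $st t^*\le s$ to get $\A_{st}\A_{t^*}\sbe\A_s\A_t\A_{t^*}$, and conversely $\A_s\A_t\A_{t^*}\sbe\A_{st}\A_{t^*}$ from $\A_s\A_t\sbe\A_{st}$. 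Axiom~(iii) is exactly the tro condition on $\A_s$. For~(2), ``$\Leftarrow$'' is immediate from $\cspn\A_s\A_t=\A_{st}$; for ``$\Rightarrow$'', a homomorphism forces $\A_s\cdot\A_t=\A_{st}$ on the nose, which is saturation.

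For~(3), Proposition~\ref{prop:UniProp} gives the semigroup homomorphism $\Sh\pi\colon\SG\to\Pl(\hils)$ with $\Sh\pi([s])=\A_s$, so $\Sh\A_x\defeq\Sh\pi(x)$ automatically satisfies the multiplicativity, involution, and inclusion constraints required of a concrete Fell bundle over $\SG$; moreover $\Sh\pi$ is a homomorphism (not merely partial), hence $\Sh\A$ is saturated. To read off formula~\eqref{eq:FibersOfCheckA}, apply $\Sh\pi$ to the normal form $x=\e{s_1}\cdots\e{s_n}[s]$ and use $\Sh\pi(\e{s_i})=\Sh\pi([s_i][s_i^*])=\A_{s_i}\cdot\A_{s_i^*}$. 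What does require care is checking that $\Sh\A$ is a \emph{Fell bundle} in the full abstract sense, in particular that the fibers over idempotents are \cstar{}algebras and that positivity of $a^*a$ holds; but since $\Sh\A$ sits concretely inside $\Ls(\hils)$ this is inherited from $\Ls(\hils)$, and well-definedness of the fiber (independence of the chosen normal form) follows from uniqueness of normal forms (Theorem~\ref{theo:UniquenessOfNormalForm}) — or simply from the fact that $\Sh\pi$ is a well-defined function on $\SG$.

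For~(4), the inverse construction takes a saturated Fell bundle $\mathcal C$ over $\SG$ and restricts it along $\iota_G\colon\G\to\SG$, i.e.\ sets $\A_s\defeq\mathcal C_{[s]}$; one checks this is a Fell bundle over $\G$ and that the two assignments are mutually inverse, using~\eqref{eq:FibersOfCheckA} in one direction and, in the other, that a saturated bundle over $\SG$ is determined on all of $\SG$ by its values on the generators $[s]$ (because $\mathcal C_{xy}=\mathcal C_x\cdot\mathcal C_y$ and every element of $\SG$ is a product of generators). Finally, for~(5), the canonical Fell bundle morphism $\Sh\A\to\A$ covering $\partial\colon\SG\to\G$ is fiberwise the inclusion $\Sh\A_x=\A_{s_1}\A_{s_1^*}\cdots\A_{s_n}\A_{s_n^*}\A_s\sbe\A_s$; this morphism is surjective on the total spaces (take $A=\{s\}$) and ``injective enough'' that it induces the stated isomorphisms of full, reduced, and unit-fiber \cstar{}algebras. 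Concretely, since $\cspn\bigcup_x\Sh\A_x=\cspn\bigcup_s\A_s$ inside $\Ls(\hils)$, both full algebras are quotients of the same object; one must check the representation-theoretic universal properties match up (every representation of $\A$ pulls back to one of $\Sh\A$ via $\partial$, and conversely a representation of $\Sh\A$ descends because it is saturated and $\partial$ is essentially injective), and the reduced case follows by comparing the regular representations on $\ell^2$ of the unit fibers. \textbf{The main obstacle} I anticipate is precisely this last step: carefully matching the (full and reduced) cross-sectional \cstar{}algebras, since it requires unwinding the definitions of $C^*(\A)$ and $C^*_\red(\A)$ from \cite{Exel:noncomm.cartan} and verifying that the degree map $\partial$ together with saturation of $\Sh\A$ sets up the needed bijection of representations and an isometry of the reduced norms — everything before that is essentially bookkeeping with the Fell-bundle axioms and normal forms.
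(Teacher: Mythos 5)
Your treatment of the first four assertions matches the paper's argument in all essentials: you prove that $\pi$ is a partial homomorphism using the ternary identity $\A_t=\A_t\cdot\A_t^*\cdot\A_t$ together with $stt^*\le s$ to get the reverse inclusion $\A_{st}\cdot\A_{t^*}\sbe\A_s\cdot\A_t\cdot\A_{t^*}$, you identify saturation with $\pi$ being a genuine homomorphism, you obtain $\Sh\A$ and formula~\eqref{eq:FibersOfCheckA} by applying the homomorphism $\Sh\pi$ to normal forms, and you invert the construction by restricting along $\iota_G$. All of this is exactly what the paper does.

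The genuine gap is in the final assertion, and you have flagged it yourself as the main obstacle without resolving it. As written, your proposal does not establish $C^*(\Sh\A)\cong C^*(\A)$ or its reduced counterpart: the claim that a representation of $\Sh\A$ ``descends because it is saturated and $\partial$ is essentially injective'' is precisely the nontrivial point and is not obvious, and the reduced case requires an isometry of reduced norms that is more delicate than ``comparing the regular representations on $\ell^2$ of the unit fibers.'' The paper does not prove this from scratch either; instead it observes that $(\iota,\partial)\colon(\Sh\A,\SG)\to(\A,\G)$ exhibits $\Sh\A$ as a \emph{refinement} of $\A$ in the sense of \cite[Definition~8.1]{BussExel:Regular.Fell.Bundle} and then invokes \cite[Theorem~8.4]{BussExel:Regular.Fell.Bundle}, which states that refinements induce isomorphisms of the full and reduced cross-sectional \cstar{}algebras and of the restrictions to idempotents. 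Verifying the refinement conditions amounts to three checks you essentially already have: $\partial$ is surjective and essentially injective (Proposition~\ref{prop:NormalFormDegree}); each $\iota\colon\Sh\A_x\to\A_{\partial(x)}$ is injective, being an inclusion by \eqref{eq:FibersOfCheckA}; and $\A_s=\overline{\sum_{\partial(x)=s}\iota(\Sh\A_x)}$, which is immediate since $\iota(\Sh\A_{[s]})=\A_s$. To complete your proof you must either cite that refinement theorem or carry out the representation-matching and reduced-norm arguments in full, which is substantially more work than everything that precedes it.
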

\begin{proof}
To prove that $\pi$ is a partial homomorphism, we have to show that
$$\A_s\cdot\A_t\cdot \A_{t^*}=\A_{st}\cdot\A_{t^*}\quad\mbox{and}\quad \A_{s^*}\cdot\A_s\cdot\A_t=\A_{s^*}\cdot\A_{st}\quad \mbox{for all }s,t\in \G.$$ Since $\A_s\A_t\sbe\A_{st}$, we have $\A_s\cdot\A_t\cdot
\A_{t^*}\sbe\A_{st}\cdot\A_{t^*}$. On the other hand, observe that $\A_{se}\sbe\A_s$ for all $e\in E(\G)$ because $se\leq s$ in $\G$. Applying this to $e=tt^*$ and using that $\A_{t^*}=\A_{t^*}\cdot\A_{t}\cdot\A_{t^*}$, we
get
$$\A_{st}\cdot \A_{t^*}=\A_{st}\cdot\A_{t^*}\cdot\A_{t}\cdot\A_{t^*}\sbe \A_{stt^*}\cdot \A_t\cdot\A_{t^*}\sbe\A_{s}\cdot\A_t\cdot\A_{t^*}.$$
Similarly, $\A_{s^*}\cdot\A_s\cdot\A_t=\A_{s^*}\cdot\A_{st}$. Thus, $\pi$ is a partial homomorphism.

Of course, $\pi$ is a homomorphism
if and only if $\A_{st}=\pi(st)=\pi(s)\pi(t)=\A_s\cdot\A_t$, that is, $\A$ is saturated.
Observe that the image of $\pi$ (and hence of $\Sh\pi$) is contained in $\Pl(\hils,B)$, where $B=C^*(\E)$ (see comments above).
Since $\Sh\pi$ is a representation, we have $\Sh\pi(xy)=\Sh\pi(x)\Sh\pi(y)$ and $\Sh\pi(x^*)=\Sh\pi(x)^*$, that is, we have
$\Sh\A_{xy}=\Sh\A_x\cdot\Sh\A_y$ and $\Sh\A_{x^*}=\Sh\A_x^*$ for all $x,y\in \SG$.
If $x\leq y$ in $\SG$, then $\Sh\pi(x)\leq \Sh\pi(y)$ in $\Pl(\hils,B)$, that is, $\Sh\A_x\sbe\Sh\A_y$
whenever $x\leq y$ (the natural order of $\Pl(\hils,B)$ is given by inclusion of subspaces).
This implies that $\Sh\A$ is a (concrete) saturated Fell bundle over $\SG$.
If $x=\e{s_1}\cdots\e{s_n}[s]$, observe that
$$\Sh\pi(x)=\Sh\pi(\e{s_1})\cdots\Sh\pi(\e {s_n})\Sh\pi(\e s)=\pi(s_1)\pi(s_1^*)\cdots\pi(s_n)\pi(s_n^*)\pi(s).$$
This is exactly Equation~\eqref{eq:FibersOfCheckA}.

If $\tilde\A$ is a saturated Fell bundle over $\SG$, then we may "restrict" $\tilde\A$ to $\G$ (using the canonical inclusion map $\iota_\G\colon\G\to\SG$) and get a Fell bundle $\A$ over $\G$. The two constructions
$\A\mapsto\tilde\A$ and $\tilde\A\mapsto \A$ are easily seen to be inverse to each other, so that Fell bundles over $\G$ correspond bijectively to saturated Fell bundles over $\SG$.

The final assertion will follow from \cite[Theorem~8.4]{BussExel:Regular.Fell.Bundle}, after we check that the canonical morphism
$$(\iota,\partial)\colon(\Sh\A,\SG)\to (\A,\G)$$
gives $\Sh\A$ as a \emph{refinement} of $\A$ in the sense of \cite[Definition~8.1]{BussExel:Regular.Fell.Bundle},
where $\partial\colon \SG\to \G$ is the degree map and $\iota\colon\Sh\A\to\A$ is given by the inclusions $\Sh\A_x\into\A_{\partial(x)}$
(observe that $\Sh\A_x\sbe\A_{\partial(x)}$ by Equation~\eqref{eq:FibersOfCheckA}).
First, we already know from Proposition~\ref{prop:NormalFormDegree} that $\partial$ is (surjective and) essentially injective.
And, by definition, $\iota\colon\Sh\A_x\to \A_{\partial(x)}$ is the inclusion map, so it is injective. Finally, given $s\in \G$, the condition
$$\A_s=\overline{\sum\limits_{\partial(x)=s}\iota(\Sh\A_x)}$$
appearing in \cite[Definition~8.1]{BussExel:Regular.Fell.Bundle} is obviously satisfied since $\iota(\Sh\A_{[s]})=\A_s$.
\end{proof}

\begin{remark}\label{rem:refinementsAndRegularity}
The above result improves \cite[Proposition~8.6]{BussExel:Regular.Fell.Bundle}.
To be more precise, in \cite[Proposition~8.6]{BussExel:Regular.Fell.Bundle} we proved that every Fell bundle
$\A=\{\A_s\}_{s\in \G}$ has a saturated refinement $\B=\{\B_t\}_{t\in T}$, where $T$ is another inverse semigroup which, \emph{a priori}, depends on
the Fell bundle $\A$ (see proof of Proposition~8.6 in \cite{BussExel:Regular.Fell.Bundle}). Above we have proved that $T$ can be chosen to be $\SG$, a more natural choice which depends only on $\G$. So, for instance, if we
know that $\G$ is countable, then so is $T=\SG$ (this was not clear in \cite[Proposition~8.6]{BussExel:Regular.Fell.Bundle}). Also, if the fibers $\A_s$ are separable, then so are the fibers $\B_t$ because each one is a
subspace of some $\A_s$. The refinement obtained in the above theorem also preserves (local) regularity (see \cite{BussExel:Regular.Fell.Bundle} for the definition of (local) regularity of Fell bundles): in fact, observe
that each fiber $\Sh\A_x$ is an ideal of $\A_{\partial(x)}$ (in the sense of \cite[Definition~6.1]{Exel:TwistedPartialActions}). So, the result follows from \cite[Proposition~6.3]{Exel:TwistedPartialActions}.
\end{remark}

\section{Fell bundles and partial twisted actions}

Given a twisted partial action $(B,\beta,\omega)$ of $\G$ on a \cstar{}algebra $B$ as in Definition~\ref{def:TwistedPartialAction}, there is a canonical associated Fell bundle $\A$ over $\G$ as described in
\cite{BussExel:Regular.Fell.Bundle} for (global) twisted actions (where we get saturated Fell bundles). The Fell bundle $\A$ is defined as follows:

The fiber $\A_s$ over $s\in \G$ is defined by $\A_s\defeq \{(a,s): b\in \D_s\}=\D_s\delta_s$, where we write $a\delta_s$ for the element $(a,s)$. The operations on $\A$ are defined by
\begin{equation*}\label{eq:DefProductFellBundleFromTwistedAction}
(a\delta_s)\cdot(b\delta_t)\defeq\beta_s\big(\beta_s^{-1}(a)b\big)\omega(s,t)\delta_{st}
\end{equation*}
and
\begin{equation*}\label{eq:DefInvolutionFellBundleFromTwistedAction}
(a\delta_s)^*\defeq \beta_s^{-1}(a^*)\omega(s^*,s)^*\delta_{s^*}
\end{equation*}
and the inclusion maps $j_{t,s}\colon\A_s\into \A_t$ for $s\leq t$ in $\G$ are defined by
\begin{equation*}\label{eq:DefInclusionsFellBundleFromTwistedAction}
j_{t,s}\colon\B_s\to \B_t\quad\mbox{by}\quad j_{t,s}(a\delta_s)\defeq a\omega(t,s^*s)^*\delta_t.
\end{equation*}
The above operations are exactly the same as the ones defined in \cite{BussExel:Regular.Fell.Bundle}. And the proof that $\A$ is in fact a Fell bundle can also be made in the same way. The only difference here is that $\A$
is not necessarily saturated because of the partiality of our twisted action $(\beta,\omega)$. In fact, it is easy to see that $\A$ is saturated if and only if $(\beta,\omega)$ is a (global) twisted action (as defined in
\cite{BussExel:Regular.Fell.Bundle}). In any case, the Fell bundle $\A$ is always regular . This means that the fibers $\A_s$ are regular as imprimitivity Hilbert $I_s$-$J_s$-bimodules, where $I_s\defeq \A_s\cdot\A_s^*$ and
$J_s=\A_s^*\cdot\A_s$ (closed linear spans), that is (see \cite{BussExel:Regular.Fell.Bundle} for the precise definition), there is a family $u=(u_s)_{s\in \G}$ of unitary multipliers $u_s\colon J_s\to\A_s$ for the
imprimitivity bimodules $\A_s$ (recall that a multiplier of an imprimitivity Hilbert $I,J$-bimodule $\F$ is, by definition, an adjointable operator $v\colon J\to \F$). For the Fell bundle $\A$ above defined we may take $u_s$
as the multiplier defined by $u_s(x\delta_{s^*s})\defeq \beta_s(x)\delta_s$ for all $x\in \D_{s^*}$. It is easy to see that $I_s=\A_s^*\A_s=\D_{s^*}\delta_s$ so that $u_s$ is a well-defined map from $J_s$ to $\A_s$.
Moreover, $u_s$ is adjointable with adjoint given by $u_s^*(y\delta_s)=\beta_s^{-1}(y)\delta_{s^*s}$ for all $y\in \D_s$. Observe that $u_s$ is in fact a unitary multiplier, that is, $u_s^*u_s=1_{s^*}$ and $u_su_s^*=1_s$,
where $1_s$ is the unit multiplier of $J_s\cong \D_s$ (isomorphism of \cstar{}algebras). Also, observe that $u_e=1_e$ for all $e\in E(\G)$.

Hence, to a twisted partial action $(\beta,\omega)$ we attach a pair $(\A,u)$ consisting of a (regular) Fell bundle $\A$ over $\G$ and a family $u=(u_s)_{s\in \G}$ of unitary multipliers $u_s$ for $\A_s$ such that $u_e=1_e$
for all $e\in E(\G)$.

Conversely, to such a pair $(\A,u)$ it is possible to construct a twisted partial action $(\beta,\omega)$ using ideas as in \cite[Section~3]{BussExel:Regular.Fell.Bundle} as follows: given a regular concrete Fell bundle
$\A=\{\A_s\}_{s\in \G}$ in $\Ls(\hils)$, a family of unitary multipliers $u=\{u_s\}_{s\in \G}$ as above may be viewed concretely as partial isometries $u_s\in \Ls(\hils)$ satisfying:
\begin{equation}\label{eq:partial.Isometries.Regularity}
u_s\A_s^*=\A_s\cdot \A_s^*,\quad \A_s^*u_s=\A_s^*\cdot\A_s,\quad u_su_s^*=1_s,\quad u_s^*u_s=1_{s^*},\mbox{ and}\quad u_e=1_e
\end{equation}
for all $s\in \G$ and $e\in E(\G)$, where $1_s$ denotes the unit multiplier of $\A_s\cdot\A_s^*$, that is, the (orthogonal) projection onto the subspace $\A_s\cdot\A_s^*\cdot \hils\sbe \hils$. The associated twisted partial
action $(\beta,\omega)$ is then given by
$$\beta_s\colon\D_{s^*}\to\D_s,\quad\beta_s(x)\defeq u_sxu_s^*,\quad\mbox{and}\quad\omega(s,t)\defeq u_su_tu_{st}^*,$$
where $\D_s\defeq\A_s\cdot\A_s^*$ (closed linear span). The same ideas as in \cite{BussExel:Regular.Fell.Bundle} may be used to prove that the pair $(\beta,\omega)$ above defined is in fact a twisted partial action and the
following result (see \cite[Corollary~4.16]{BussExel:Regular.Fell.Bundle} for the saturated case) may be obtained:

\begin{proposition}
The above construction $(\beta,\omega)\mapsto (\A,u)$ is a bijective correspondence between (isomorphism classes of) twisted partial actions $(\beta,\omega)$ of $\G$ and regular Fell bundles $(\A,u)$ over $G$.
\end{proposition}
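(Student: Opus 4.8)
The plan is to construct the two maps between twisted partial actions and regular Fell bundles explicitly, and then check they are mutually inverse. The forward direction $(\beta,\omega)\mapsto(\A,u)$ has already been carried out in the discussion preceding the statement: given a twisted partial action, we set $\A_s\defeq\D_s\delta_s$ with the stated product, involution and inclusion maps, and we set $u_s(x\delta_{s^*s})\defeq\beta_s(x)\delta_s$. One checks, exactly as in \cite{BussExel:Regular.Fell.Bundle}, that $\A$ is a Fell bundle, that each $u_s$ is a unitary multiplier of the imprimitivity bimodule $\A_s$, and that $u_e=1_e$ for $e\in E(\G)$; so $(\A,u)$ is a regular Fell bundle over $\G$. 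For the reverse direction, starting from a regular concrete Fell bundle $(\A,u)$ in $\Ls(\hils)$, we realize each $u_s$ as a partial isometry satisfying \eqref{eq:partial.Isometries.Regularity} and define $\D_s\defeq\A_s\cdot\A_s^*$, $\beta_s(x)\defeq u_sxu_s^*$ and $\omega(s,t)\defeq u_su_tu_{st}^*$. The first task is to verify that this $(\beta,\omega)$ satisfies all five axioms of Definition~\ref{def:TwistedPartialAction}; this is where the bulk of the work lies, but it runs parallel to the corresponding verification in \cite[Section~3]{BussExel:Regular.Fell.Bundle} for the global case, the only new point being that $\D_s$ need not equal $\D_{ss^*}$, which is accounted for by Proposition~\ref{prop:PropertiesTwistedPartialAction}.

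Once both constructions are available, I would show they are inverse to each other. Starting from $(\beta,\omega)$, forming $(\A,u)$ and then reading off the associated twisted partial action $(\beta',\omega')$, one computes $\D_s'=\A_s\cdot\A_s^*=\D_s\delta_s\cdot(\D_s\delta_s)^*$, which is canonically identified with $\D_s$; under this identification $\beta_s'(x)=u_sxu_s^*$ unwinds, using the definition of the product in $\A$ and of $u_s$, to $\beta_s(x)$, and similarly $\omega'(s,t)=u_su_tu_{st}^*$ unwinds to $\omega(s,t)$. Conversely, starting from $(\A,u)$, forming $(\beta,\omega)$ and then the Fell bundle $(\A',u')$ attached to it, the fiber $\A_s'=\D_s\delta_s=(\A_s\cdot\A_s^*)\delta_s$ is isomorphic to $\A_s$ via $a\delta_s\mapsto u_s^*a$ (equivalently $x\mapsto u_sx\,\delta_s$ on $\A_s^*\cdot\A_s$), and one checks this isomorphism intertwines the products, involutions, inclusion maps and the multipliers $u_s$ versus $u_s'$. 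These computations are routine manipulations with partial isometries and the relations \eqref{eq:partial.Isometries.Regularity}.

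The main obstacle is bookkeeping rather than conceptual: one must be careful that the various ideals $\D_s$, $\A_s\cdot\A_s^*$, $\A_s^*\cdot\A_s$ and their intersections match up correctly under the identifications, and that the multiplier algebras into which $\omega(s,t)$ and $\omega'(s,t)$ land are genuinely the same, which again rests on Proposition~\ref{prop:PropertiesTwistedPartialAction}\eqref{prop:PropertiesTwistedPartialAction:RelationBetweenDomains}. Since the entire argument is the partial-action analogue of \cite[Corollary~4.16]{BussExel:Regular.Fell.Bundle}, and the necessary adjustments for partiality have already been isolated in Proposition~\ref{prop:PropertiesTwistedPartialAction}, I would present the proof mostly by reference: set up the two assignments explicitly as above, note that verifying the twisted-partial-action axioms and the Fell bundle axioms proceeds verbatim as in \cite{BussExel:Regular.Fell.Bundle} (with $\D_s$ in place of $\D_{ss^*}$ where relevant), and then record the two short computations showing the round trips return the original data up to the canonical isomorphisms. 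This keeps the proof honest while avoiding a lengthy recapitulation of calculations already in the literature.
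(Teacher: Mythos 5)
Your approach is essentially the paper's: the paper likewise presents only the two explicit constructions and then asserts the proposition, deferring all verifications to \cite{BussExel:Regular.Fell.Bundle} (Section~3 and Corollary~4.16) with the partiality adjustments supplied by Proposition~\ref{prop:PropertiesTwistedPartialAction}, so your added sketch of the two round trips actually records more than the paper does. One formula in that sketch needs correcting: for $a\delta_s\in\A_s'=\D_s\delta_s$ with $\D_s=\A_s\cdot\A_s^*$, the element $u_s^*a$ lies in $u_s^*\cdot\A_s\cdot\A_s^*=\A_s^*\cdot\A_s\cdot\A_s^*=\A_s^*$, not in $\A_s$, so the identification $\A_s'\congto\A_s$ should instead be $a\delta_s\mapsto au_s$ (using $\A_s\cdot\A_s^*\cdot u_s=\A_s\cdot\A_s^*\cdot\A_s=\A_s$), with inverse $b\mapsto (bu_s^*)\delta_s$; with that repaired, the rest of your bookkeeping goes through and your proof matches the intended one.
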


The \cstar{}algebra $B$ above where $\G$ acts through $(\beta,\omega)$ is, via the above correspondence, canonically isomorphic to the cross-sectional \cstar{}algebra $C^*(\E)$ of the restriction $\E$ of $\A$ to the
idempotents $E=E(\G)$. In fact, observe that by the construction of $\A$, each fiber $\A_e= \D_e\delta_e\cong \D_e$ over an idempotent $e\in E$ embeds into $B$. Since the ideals $\D_s$ for $s\in \G$ span a dense subspace of
$B$, the same is true for $\D_e$ with $e\in E$ because for every $s\in \G$ we have $\D_s\sbe\D_{ss^*}$ (see Proposition~\ref{prop:PropertiesTwistedPartialAction}(i)). Hence, the embeddings $\A_e\into B$ extend to an
isomorphism $C^*(\E)\congto B$ by \cite[Proposition~4.3]{Exel:noncomm.cartan}.

Moreover, the correspondence $(B,\G,\beta,\omega)\mapsto \A$ is also compatible with crossed products in the sense that
\begin{equation}\label{eq:IsoCrossedProducts}
B\rtimes_{\beta,\omega}\G\cong C^*(\A)\quad\mbox{and}\quad B\rtimes_{\beta,\omega}^\red\G\cong C^*_\red(\A).
\end{equation}
We did not define the above full $B\rtimes_{\beta,\omega}\G$ and reduced $B\rtimes_{\beta,\omega}^\red\G$ crossed products for a twisted partial action $(\beta,\omega)$ of $\G$ on $B$, but they can be defined (and the above
isomorphisms can be obtained) in the same way as explained in \cite[Section~6]{BussExel:Regular.Fell.Bundle} for twisted (global) actions.

Now, if $\A$ is a regular Fell bundle concrete in $\Ls(\hils)$, then, as already mentioned in Remark~\ref{rem:refinementsAndRegularity}, the corresponding Fell bundle $\tilde\A$ over $\SG$ as in
Theorem~\ref{theo:FellBundlesoverGandS(G)} is also regular (and concrete in $\Ls(\hils)$). Moreover, if $u=\{u_s\}_{s\in \G}$ is a family of partial isometries on $\hils$ satisfying~\eqref{eq:partial.Isometries.Regularity}
for $\A$, then the family $\tilde u=\{\tilde u_x\}_{x\in \SG}$ defined by
$$\tilde u_x\defeq u_{s_1}u_{s_1}^*\cdots u_{s_n}u_{s_n}^*u_s\quad\mbox{whenever }x=\epsilon_{s_1}\cdots\epsilon_{s_n}[s]\quad\mbox{is in normal form},$$
also satisfies~\eqref{eq:partial.Isometries.Regularity} for $\tilde\A$. Moreover, it is not difficult to see that if $(\beta,\omega)$ is the twisted partial action corresponding to $(\A,u)$, then the twisted (global) action
$(\tilde\beta,\tilde\omega)$ associated to $(\tilde\A,\tilde u)$ as above is the same as the one already obtained in Propostion~\ref{prop:TwistedActionCorrespondence}. In particular,
Theorem~\ref{theo:FellBundlesoverGandS(G)} and~\eqref{eq:IsoCrossedProducts} yields the following consequence:

\begin{corollary}
Let $(\beta,\omega)$ be a twisted partial action of $\G$ on $B$ and let $(\tilde\beta,\tilde\omega)$ the corresponding twisted action of $\SG$ on $B$. Then
$$B\rtimes_{\beta,\omega}\G\cong B\rtimes_{\tilde\beta,\tilde\omega}\SG\quad\mbox{and}\quad
    B\rtimes_{\beta,\omega}^\red\G\cong B\rtimes_{\tilde\beta,\tilde\omega}^\red\SG.$$
\end{corollary}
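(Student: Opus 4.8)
The plan is to assemble this corollary from the pieces already in place rather than to prove anything new from scratch. The statement is really a transitivity argument: we have two correspondences, one sending a twisted partial action $(\beta,\omega)$ of $\G$ on $B$ to the twisted global action $(\tilde\beta,\tilde\omega)$ of $\SG$ on $B$ (Proposition~\ref{prop:TwistedActionCorrespondence}), and another sending a twisted partial action to its associated regular Fell bundle $(\A,u)$ together with the crossed-product isomorphisms \eqref{eq:IsoCrossedProducts}. We want to commute these so that the crossed product by $\G$ and the crossed product by $\SG$ are identified through a common Fell-bundle description.

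The key steps, in order, are as follows. First I would start from $(\beta,\omega)$ and pass to its associated regular concrete Fell bundle $\A=\{\A_s\}_{s\in\G}$ in $\Ls(\hils)$, with family of partial isometries $u=\{u_s\}_{s\in\G}$ satisfying \eqref{eq:partial.Isometries.Regularity}; by \eqref{eq:IsoCrossedProducts} we have $B\rtimes_{\beta,\omega}\G\cong C^*(\A)$ and $B\rtimes^\red_{\beta,\omega}\G\cong C^*_\red(\A)$. Second, I would apply Theorem~\ref{theo:FellBundlesoverGandS(G)} to obtain the saturated Fell bundle $\tilde\A=\{\tilde\A_x\}_{x\in\SG}$ over $\SG$, whose fibers are given by \eqref{eq:FibersOfCheckA}, and invoke the last sentence of that theorem to get $C^*(\tilde\A)\cong C^*(\A)$ and $C^*_\red(\tilde\A)\cong C^*_\red(\A)$. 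Third, I would observe (as already recorded in the paragraph preceding the corollary) that $\tilde\A$ is again regular, with regularity structure implemented by the partial isometries $\tilde u_x=u_{s_1}u_{s_1}^*\cdots u_{s_n}u_{s_n}^*u_s$ for $x=\epsilon_{s_1}\cdots\epsilon_{s_n}[s]$ in normal form, so that \eqref{eq:IsoCrossedProducts} applies to $(\tilde\A,\tilde u)$ as well: if $(\beta',\omega')$ denotes the twisted global action of $\SG$ associated to $(\tilde\A,\tilde u)$, then $B\rtimes_{\beta',\omega'}\SG\cong C^*(\tilde\A)$ and likewise for the reduced version. Fourth and crucially, I would use the remark already made in the text that this $(\beta',\omega')$ coincides with the twisted action $(\tilde\beta,\tilde\omega)$ produced by Proposition~\ref{prop:TwistedActionCorrespondence}; chaining the isomorphisms then gives
$$B\rtimes_{\beta,\omega}\G\cong C^*(\A)\cong C^*(\tilde\A)\cong B\rtimes_{\tilde\beta,\tilde\omega}\SG,$$
and the same chain with $C^*_\red$ and the reduced crossed products throughout.

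The main obstacle, and the only point that is not purely bookkeeping, is the identification in the fourth step: one must check that the two routes from $(\beta,\omega)$ to a twisted action of $\SG$ -- namely (a) directly restricting-and-expanding as in Proposition~\ref{prop:TwistedActionCorrespondence}, and (b) first building the Fell bundle $\A$, then expanding to $\tilde\A$ via Theorem~\ref{theo:FellBundlesoverGandS(G)}, then reading off the twisted action from the regularity structure $\tilde u$ -- yield literally the same pair $(\tilde\beta,\tilde\omega)$, not merely exterior-equivalent ones. This is asserted in the excerpt ("it is not difficult to see that\ldots the twisted (global) action $(\tilde\beta,\tilde\omega)$ associated to $(\tilde\A,\tilde u)$ as above is the same as the one already obtained in Proposition~\ref{prop:TwistedActionCorrespondence}"), and it reduces to comparing formulas on the fibers over the generators $[s]$: one has $\tilde\D_{[s]}=\A_s\cdot\A_s^*=\D_s$, $\tilde\beta_{[s]}(x)=\tilde u_{[s]}x\tilde u_{[s]}^*=u_sxu_s^*=\beta_s(x)$, and $\tilde\omega([s],[t])=\tilde u_{[s]}\tilde u_{[t]}\tilde u_{[st]}^*$, which after putting $[s][t]=\epsilon_s[st]$ in normal form equals $u_su_tu_{st}^*$, matching $\omega(s,t)$ on $\D_s\cap\D_{st}$; since a twisted action of $\SG$ is determined by its values on the generating set $\{[s]:s\in\G\}$ together with Proposition~\ref{prop:FormulaProd}, this forces the two twisted actions to agree. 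I would spell out exactly this comparison-on-generators, then conclude by assembling the isomorphism chain above.
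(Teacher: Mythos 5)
Your proposal is correct and follows essentially the same route as the paper: the corollary is obtained there by combining the crossed-product isomorphisms of \eqref{eq:IsoCrossedProducts} with Theorem~\ref{theo:FellBundlesoverGandS(G)}, via the observation in the preceding paragraph that $(\tilde\A,\tilde u)$ is regular and corresponds to the same twisted action $(\tilde\beta,\tilde\omega)$ as in Proposition~\ref{prop:TwistedActionCorrespondence}. Your explicit verification on the generators $[s]$ of the identification the paper leaves as ``not difficult to see'' is a welcome addition, not a deviation.
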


Of course, this also covers the case of (untwisted) partial actions of inverse semigroups on \cstar{}algebras (and, in particular, on locally compact spaces):

\begin{corollary}
Let $\beta$ be a partial action of $\G$ on $B$ and let $\tilde\beta$ the corresponding action of $\SG$ on $B$. Then
$$B\rtimes_\beta\G\cong B\rtimes_{\tilde\beta}\SG\quad\mbox{and}\quad
    B\rtimes_{\beta}^\red\G\cong B\rtimes_{\tilde\beta}^\red\SG.$$
\end{corollary}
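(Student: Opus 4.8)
The plan is to show this corollary is an immediate specialization of the preceding one. First I would note that an (untwisted) partial action $\beta$ of $\G$ on $B$ is precisely a twisted partial action $(\beta,\omega)$ in the sense of Definition~\ref{def:TwistedPartialAction} in which every cocycle $\omega(s,t)$ is the unit multiplier $1_{st}$ of the ideal $\D_s\cap\D_{st}$; one checks directly that axioms (iii)--(v) of Definition~\ref{def:TwistedPartialAction} reduce to trivial identities in this case, while (i) and (ii) become the defining conditions of a partial action as in Proposition~\ref{RepInSimmetric}.

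Next I would verify that the construction $(\beta,\omega)\mapsto(\tilde\beta,\tilde\omega)$ of Proposition~\ref{prop:TwistedActionCorrespondence} carries trivial twists to trivial twists. Indeed, $\tilde\omega(x,y)$ is by definition the restriction of $\omega(r,s)$ to the ideal $\tilde\D_{xy}$, so if every $\omega(r,s)=1_{rs}$ then every $\tilde\omega(x,y)=1_{xy}$ as well; hence the twisted action of $\SG$ corresponding to an untwisted partial action $\beta$ of $\G$ is itself untwisted, i.e.\ is exactly the (global) action $\tilde\beta$ of $\SG$ that corresponds to $\beta$ under the classical correspondence between partial actions of $\G$ and actions of $\SG$ (specialize Proposition~\ref{prop:TwistedActionCorrespondence}).

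Finally I would invoke the previous corollary with trivial twist: since untwisted crossed products are, by definition, the trivially-twisted crossed products, $B\rtimes_\beta\G=B\rtimes_{\beta,1}\G$ and $B\rtimes_{\tilde\beta}\SG=B\rtimes_{\tilde\beta,1}\SG$, and likewise for the reduced versions; applying the isomorphisms $B\rtimes_{\beta,\omega}\G\cong B\rtimes_{\tilde\beta,\tilde\omega}\SG$ and $B\rtimes_{\beta,\omega}^{\red}\G\cong B\rtimes_{\tilde\beta,\tilde\omega}^{\red}\SG$ from the previous corollary with $\omega\equiv 1$ yields the desired isomorphisms. I do not anticipate a serious obstacle here: the only point requiring a little care is confirming that all the bookkeeping in Definition~\ref{def:TwistedPartialAction} and in the construction of Proposition~\ref{prop:TwistedActionCorrespondence} really does degenerate consistently when $\omega\equiv 1$ (in particular that the ideals $\tilde\D_x$, and hence the domains appearing in the crossed product constructions, are unaffected by the twist), but this is routine since the twist enters those definitions only multiplicatively through the $\omega(s,t)$.
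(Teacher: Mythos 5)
Your proposal is correct and matches the paper's (essentially unstated) argument: the corollary is obtained by specializing the preceding twisted corollary to the trivial cocycle $\omega\equiv 1$, after observing that the correspondence $(\beta,\omega)\mapsto(\tilde\beta,\tilde\omega)$ of Proposition~\ref{prop:TwistedActionCorrespondence} carries trivial twists to trivial twists. Your extra care about the ideals $\tilde\D_x$ being unaffected by the twist is a reasonable (and correct) elaboration of a point the paper leaves implicit.
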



\begin{bibdiv}
\begin{biblist}
\bib{BirgetRhodes:expansion}{article}{
  author={Birget, Jean-Camille},
  author={Rhodes, John},
  title={Almost finite expansions of arbitrary semigroups},
  journal={J. Pure Appl. Algebra},
  volume={32},
  year={1984},
  number={3},
  pages={239--287},
  issn={0022-4049},
  review={\MRref {745358}{85j:20075}},
  doi={10.1016/0022-4049(84)90092-6},
}

\bib{BussExel:Regular.Fell.Bundle}{article}{
  author={Buss, Alcides},
  author={Exel, Ruy},
  title={Twisted actions and regular Fell bundles over inverse semigroups},
  journal={Proc. of London Math. Soc.},
  date={2010},
  status={accepted},
  note={\arxiv {1003.0613v1}},
}

\bib{Exel:TwistedPartialActions}{article}{
  author={Exel, Ruy},
  title={Twisted partial actions: a classification of regular $C^*$\nobreakdash -algebraic bundles},
  journal={Proc. London Math. Soc. (3)},
  volume={74},
  year={1997},
  number={2},
  pages={417--443},
  issn={0024-6115},
  review={\MRref {1425329}{98d:46075}},
}

\bib{Exel:PartialActionsGroupsAndInverseSemigroups}{article}{
  author={Exel, Ruy},
  title={Partial actions of groups and actions of inverse semigroups},
  journal={Proc. Amer. Math. Soc.},
  volume={126},
  year={1998},
  number={12},
  pages={3481--3494},
  issn={0002-9939},
  review={\MRref {1469405}{99b:46102}},
}

\bib{Exel:noncomm.cartan}{article}{
  author={Exel, Ruy},
  title={Noncommutative Cartan subalgebras of $C^*$\nobreakdash-algebras},
  journal={New York J. Math.},
  issn={1076-9803},
  volume={17},
  date={2011},
  pages={331--382},
  eprint={http://nyjm.albany.edu/j/2011/17-17v.pdf},
}

\bib{KellendonkLawson:PartialActions}{article}{
  author={Kellendonk, Johannes},
  author={Lawson, Mark V.},
  title={Partial actions of groups},
  journal={Internat. J. Algebra Comput.},
  volume={14},
  year={2004},
  number={1},
  pages={87--114},
  issn={0218-1967},
  doi={10.1142/S0218196704001657},
  review={\MRref {2041539}{2004m:20120}},
}

\bib{Lawson:InverseSemigroups}{book}{
  author={Mark V. Lawson},
  title={Inverse semigroups: the theory of partial symmetries},
  publisher={World Scientific Publishing Co.},
  place={River Edge, NJ},
  date={1998},
  pages={xiv+411},
  isbn={981-02-3316-7},
}

\bib{LawsonMargolisSteinberg:Expansions}{book}{
  author={Lawson, Mark V.},
  author={Margolis, Stuart W.},
  author={Steinberg, Benjamin},
  title={Expansions of inverse semigroups},
  journal={J. Aust. Math. Soc.},
  volume={80},
  year={2006},
  number={2},
  pages={205--228},
  issn={1446-7887},
  doi={10.1017/S1446788700013082},
  review={\MRref {2221438}{2007c:20133}},
}

\bib{Sieben:crossed.products}{article}{
  author={Sieben, N\'andor},
  title={$C^*$\nobreakdash -crossed products by partial actions and actions of inverse semigroups},
  journal={J. Austral. Math. Soc. Ser. A},
  volume={63},
  year={1997},
  number={1},
  pages={32--46},
  issn={0263-6115},
  review={\MRref {1456588}{2000b:46124}},
}

\bib{SiebenTwistedActions}{article}{
  author={Sieben, N\'andor},
  title={$C^*$\nobreakdash -crossed products by twisted inverse semigroup actions},
  journal={J. Operator Theory},
  volume={39},
  year={1998},
  number={2},
  pages={361--393},
  issn={0379-4024},
  review={\MRref {1620499}{2001e:46116}},
}

\bib{Szendrei:BirgetRhodes}{article}{
  author={Szendrei, M\'aria B.},
  title={A note on Birget-Rhodes expansion of groups},
  journal={J. Pure Appl. Algebra},
  volume={58},
  year={1989},
  number={1},
  pages={93--99},
  issn={0022-4049},
  doi={10.1016/0022-4049(89)90054-6},
  review={\MRref {996176}{90k:20103}},
}
\end{biblist}
\end{bibdiv}
\vskip 1pc
\end{document}